\documentclass[reqno]{amsart}
\usepackage{amsmath,amsthm,amssymb,amsfonts,amscd}
\usepackage{mathrsfs}
\usepackage{bbm}
\usepackage{bbding}
\usepackage{hyperref}
\usepackage{geometry}\geometry{margin=1in}
\usepackage{color}
\usepackage{xcolor}

\usepackage{picture,epic}
\usepackage{tikz}

\usepackage{enumitem}




\numberwithin{equation}{section}

\setcounter{footnote}{0}

\theoremstyle{plain}
\newtheorem{theorem}{Theorem}[section]
\newtheorem{lemma}[theorem]{Lemma}

\newtheorem{proposition}[theorem]{Proposition}

\theoremstyle{definition}

\theoremstyle{remark}
\newtheorem{remark}[theorem]{Remark}

\renewcommand{\Re}{\operatorname{Re}}
\renewcommand{\Im}{\operatorname{Im}}

\newcommand{\sgn}{\operatorname{sgn}}

\newcommand{\supp}{\operatorname{supp}}

\newcommand{\GL}{\operatorname{GL}}
\newcommand{\SL}{\operatorname{SL}}

\renewcommand{\mod}{\operatorname{mod}\ }

\newcommand{\dd}{\mathrm{d}}

\newcommand{\arcsinh}{\operatorname{arcsinh}}


\makeatletter
\def\@tocline#1#2#3#4#5#6#7{\relax
  \ifnum #1>\c@tocdepth 
  \else
    \par \addpenalty\@secpenalty\addvspace{#2}%
    \begingroup \hyphenpenalty\@M
    \@ifempty{#4}{%
      \@tempdima\csname r@tocindent\number#1\endcsname\relax
    }{%
      \@tempdima#4\relax
    }%
    \parindent\z@ \leftskip#3\relax \advance\leftskip\@tempdima\relax
    \rightskip\@pnumwidth plus4em \parfillskip-\@pnumwidth
    #5\leavevmode\hskip-\@tempdima
      \ifcase #1
       \or\or \hskip 1em \or \hskip 2em \else \hskip 3em \fi%
      #6\nobreak\relax
    \hfill\hbox to\@pnumwidth{\@tocpagenum{#7}}\par
    \nobreak
    \endgroup
  \fi}
\makeatother

\begin{document}

\title[Uniform bounds for $\GL(3)\times \GL(2)$  $L$-functions]
{Uniform bounds for $\GL(3)\times \GL(2)$  $L$-functions}
\author{Bingrong Huang}
\address{Data Science Institute and School of Mathematics \\ Shandong University \\ Jinan \\ Shandong 250100 \\China}
\email{brhuang@sdu.edu.cn}


\date{\today}

\begin{abstract}
  In this paper, we prove uniform bounds for $\GL(3)\times \GL(2)$ $L$-functions in the $\GL(2)$ spectral aspect and the $t$ aspect by a delta method.
  More precisely, let $\phi$ be a Hecke--Maass cusp form for $\SL(3,\mathbb{Z})$ and $f$ a Hecke--Maass cusp form for $\SL(2,\mathbb{Z})$ with the spectral parameter $t_f$. Then for $t\in\mathbb{R}$ and any $\varepsilon>0$, we have
  \[ L(1/2+it,\phi\times f) \ll_{\phi,\varepsilon} (t_f+|t|)^{27/20+\varepsilon}. \]
  Moreover, we get subconvexity bounds for $L(1/2+it,\phi\times f)$  whenever $|t|-t_f \gg (|t|+t_f)^{3/5+\varepsilon}$.
\end{abstract}

\keywords{Uniform, subconvexity, $L$-function, $\GL(3)\times \GL(2)$, spectral, 
delta method}

\subjclass[2010]{11F66, 11F67}

\thanks{This work was supported by  the National Key Research and Development Program of China (No. 2021YFA1000700),  NSFC (Nos. 12001314 and 12031008), and the Young Taishan Scholars Program.} 

\maketitle

\section{Introduction} \label{sec:Intr}

The subconvexity problem of automorphic $L$-functions on the critical line is a far-reaching problem in number theory and has been a driving force for the development of new techniques and methods.
The principal aim is to prove bounds for a given $L$-function that are better than what the functional equation together with the Phragm\'en--Lindel\"of convexity principle would imply.
For the $\GL(1)$ case, i.e., the Riemann zeta function and Dirichlet $L$-functions, subconvexity bounds are known for a long time thanks to Weyl \cite{Weyl} and Burgess \cite{Burgess}. For the last decades, many cases of $\GL(2)$ $L$-functions, including the $\GL(2)\times \GL(2)$ Rankin--Selberg $L$-functions and the triple product $L$-functions,  have been treated (see e.g. 
Michel--Venkatesh \cite{MichelVenkatesh} and the references therein).
In the recent years, people have made progress on $\GL(3)$  $L$-functions and $\GL(3)\times \GL(2)$ Rankin--Selberg $L$-functions
(see e.g. \cite{li2011bounds,blomer2012subconvexity,Huang2021,
Munshi2015circleIII,Munshi2015circleIV,BlomerButtcane,Lin,
Munshi2018,Sharma2019,Kumar,HuangXu}).
%
In this paper we consider uniform bounds for $\GL(3)\times \GL(2)$ Rankin--Selberg $L$-functions on the critical line in both $\GL(2)$ spectral aspect and $t$ aspect.

Let $\phi$ be a Hecke--Maass cusp form of type $(\nu_1,\nu_2)$ for $\SL(3,\mathbb{Z})$
with the normalized Fourier coefficients $A(m,n)$.
The $L$-function of $\phi$ is defined as
\[ L(s,\phi) = \sum_{n\geq1} \frac{A(1,n)}{n^s}, \quad \Re(s)>1. \]
Let $f\in \mathcal{B}_0(1)$ be a Hecke--Maass cusp form with the spectral parameter $t_f$ for $\SL(2,\mathbb{Z})$, with the normalized Fourier coefficients $\lambda_f(n)$.
The $L$-function of $f$ is defined by
\[
  L(s,f) = \sum_{n\geq1} \frac{\lambda_f(n)}{n^s}, \quad \Re(s)>1.
\]
The $\GL(3)\times \GL(2)$  Rankin--Selberg $L$-function is defined as
\[
  L(s,\phi\times f) = \sum_{m\geq1}\sum_{n\geq1} \frac{A(m,n)\lambda_f(n)}{(m^2n)^s}, \quad \Re(s)>1.
\]
Those $L$-functions have analytic continuation to the whole complex plane.
Let $t\in\mathbb{R}$. We consider the bound for $L(1/2+it,\phi \times f)$ as $t$ or $t_f$ or both go to infinity.
The Phragm\'en--Lindel\"of principle implies the convexity bounds
\begin{equation}\label{eqn:convexity}
  L(1/2+it,\phi \times f) \ll_{\phi,\varepsilon} (t_f+|t|)^{3/4+\varepsilon}(|t_f-|t||+1)^{3/4+\varepsilon}.
\end{equation}
While the 
Riemann hypothesis for $L(s,\phi \times f)$ implies the Lindel\"of hypothesis, that is,
\[L(1/2+it,\phi \times f) \ll_{\phi,\varepsilon} (t_f+|t|)^{\varepsilon}.\]

Li \cite{li2011bounds} proved the first subconvexity bounds for $L(1/2,\phi \times f)$ in the $\GL(2)$ spectral aspect when $\phi$ is self-dual.
There are several improvements (see e.g. \cite{McKeeSunYe}) and generalizations (see e.g. \cite{blomer2012subconvexity,Huang2021}).
In those papers, the moment method for a family of $L$-functions is used to prove an individual subconvexity bound for one $L$-function. To make this work, one needs non-negativity of $L$-values, and hence we have the assumption that $\phi$ is self-dual and this only works for central $L$-values (at the center $1/2$).

Munshi \cite{Munshi2015circleIII} proved, for the first time, subconvexity bounds for generic $\GL(3)$ $L$-functions by applying a delta method. Recently, Munshi \cite{Munshi2018} extended his method to prove the first  subconvexity for $\rm GL (3)\times GL (2) $ $L$-functions,
\[
  L(1/2+it,\phi \times f) \ll_{\phi,f,\varepsilon} (1+|t|)^{3/2-1/42+\varepsilon}.
\]
This was improved by Lin--Sun \cite{LinSun}, and they got $L(1/2+it,\phi \times f) \ll_{\phi,f,\varepsilon} (1+|t|)^{27/20+\varepsilon}.$
Based on the work of Munshi \cite{Munshi2018},
Kumar \cite{Kumar} was able to use the DFI delta method to prove
\[
  L(1/2,\phi \times f) \ll_{\phi,\varepsilon} t_f^{3/2-1/51+\varepsilon}
\]
without the assumption that $\phi$ is self-dual.

It is interesting and challenging to prove uniform bounds in terms of both $t_f$ and $t$.
For the $\GL(2)$ case, Jutila--Motohashi \cite{JutilaMotohashi} proved uniform bounds for $L(s,f)$ on the critical line by the moment method, getting
\[
  L(1/2+it,f)\ll_\varepsilon (t_f+|t|)^{1/3+\varepsilon}.
\]
In \cite{JutilaMotohashi2006}, Jutila and Motohashi extended their method to show some uniform bounds for $\GL(2)\times \GL(2)$ $L$-functions. More precisely, they proved
\[
  L(1/2+it,g\times f) \ll_{g,\varepsilon} \left\{
  \begin{array}{ll}
    t_f^{2/3+\varepsilon}, & \textrm{for } 0\leq t\ll t_f^{2/3},\\
    t_f^{1/2+\varepsilon}t^{1/4}, & \textrm{for } t_f^{2/3}\leq t\ll t_f,\\
    t^{3/4+\varepsilon}, & \textrm{for } t_f \ll t\ll t_f^{3/2-\varepsilon},
  \end{array}\right.
\]
where $f,g$ are Hecke--Maass cusp forms for $\SL(2,\mathbb{Z})$.
Their method can not cover all cases of $t$ and $t_f$.

It seems that it is very hard to extend Jutila--Motohashi's method to solve the uniform bound problem for high degree $L$-functions such as $L(1/2+it,\phi \times f)$.
In this paper, we find a  way to prove uniform bounds for $L(1/2+it,\phi \times f)$ for any fixed $\phi$. We will use the DFI delta method. In fact, one can also solve the uniform bound problem for $\GL(2)\times \GL(2)$ $L$-functions by using our method  (together with ideas in \cite{ASS}) which may prove uniform bounds for any real $t$ and $t_f$.
Our key novelty is the treatment of integral transforms after applying summation formulas which allows us to deal with the case that both $t$ and $t_f$ are large.
The main result in this paper is the following subconvexity bounds.

\begin{theorem}\label{thm:main}
  With the notation as above. Let $t\in\mathbb{R}$.  Then if $||t|-t_f|\geq (|t|+t_f)^{5/6}$ then we have
  \[
    L(1/2+it,\phi \times f) \ll_{\phi,\varepsilon}
    (t_f+|t|)^{7/8+\varepsilon} ||t|-t_f|^{19/40},
  \]
  and if $(|t|+t_f)^{3/5} \leq ||t|-t_f|\leq (|t|+t_f)^{5/6}$ then we have
  \[
    L(1/2+it,\phi \times f) \ll_{\phi,\varepsilon}
    (t_f+|t|)^{57/56+\varepsilon} ||t|-t_f|^{17/56}.
  \]
  In particular, we have the following bound
  \begin{equation}\label{eqn:uniformbound}
    L(1/2+it,\phi \times f) \ll_{\phi,\varepsilon} (t_f+|t|)^{27/20+\varepsilon}.
  \end{equation}
\end{theorem}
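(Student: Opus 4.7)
My plan is to adapt the Duke--Friedlander--Iwaniec delta method framework developed by Munshi and Lin--Sun for $\GL(3)\times\GL(2)$ $L$-functions to the uniform setting in $(|t|,t_f)$. Writing $T = t_f + |t|$ and $\Delta = ||t|-t_f|+1$, the analytic conductor is $\asymp T^3 \Delta^3$, so the approximate functional equation and smooth dyadic decomposition reduce matters to bounding dyadic sums
\[
  S(M,N) = \sum_{m\sim M}\sum_{n\sim N} A(m,n)\lambda_f(n)(m^2n)^{-it} U\!\left(\frac{m^2 n}{M^2N}\right),
  \qquad M^2 N \ll T^{3/2+\varepsilon}\Delta^{3/2}.
\]
The convexity bound \eqref{eqn:convexity} already gives $L \ll T^{27/20+\varepsilon}$ when $\Delta \leq T^{3/5}$, so only the complementary range $\Delta \geq T^{3/5}$ needs a subconvex input; within it I would aim for the two explicit inequalities stated in the theorem.

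The first step is to detect the equation $n_1 = n_2$ using the DFI $\delta$-symbol with conductor parameter $Q$,
\[
  \delta(n_1=n_2) = \frac{1}{Q}\sum_{q\leq Q}\frac{1}{q}\sum_{a\bmod q}^{\ast}e\!\left(\frac{a(n_1-n_2)}{q}\right)g(q,n_1-n_2),
\]
so that the $\GL(3)$ variable $n_1$ paired with $A(m,\cdot)$ and the $\GL(2)$ variable $n_2$ paired with $\lambda_f$ are coupled only through an additive character. I would then apply $\GL(3)$ Voronoi on the $n_1$-sum (after a Hecke decomposition of $A(m,n)$), $\GL(2)$ Voronoi of spectral parameter $t_f$ on the $n_2$-sum, and Poisson summation on the $m$-sum. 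This dualizes $(m,n_1,n_2) \mapsto (m^{\ast},n_1^{\ast},n_2^{\ast})$ and leaves a weighted sum against Kloosterman-type character sums of modulus $\leq Q$ multiplied by an integral transform of an $x^{-it}$ phase against the $\GL(3)$ and $\GL(2)$ Bessel kernels.

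The principal difficulty, and the novelty of the paper, is the analysis of this integral transform when $|t|$ and $t_f$ are both large. The $\GL(2)$ Bessel transform involves $J_{2it_f}$ and $K_{2it_f}$; combined with the $x^{-it}$ twist, the joint phase has stationary point and curvature governed by $\Delta$ rather than by $T$ in the regime $|t|\approx t_f$, so the effective conductor of the dual $n_2$-sum drops from a power of $T$ to a power of $\Delta$. I would carry out this analysis using the Mellin--Barnes representation of the Bessel kernel and a contour shift, reducing the problem to oscillatory integrals of the form $\int y^{i(|t|\pm t_f)}\psi(y)\,\dd y$; stationary phase then yields the precise localization of $n_1^{\ast}$ and $n_2^{\ast}$ in intervals whose lengths depend on both $T$ and $\Delta$. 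The $\GL(3)$ integral transform, whose spectral parameters remain on scale $T$ throughout, is treated analogously using known asymptotics for the $\GL(3)$ Bessel kernel. This is the step that the paper highlights as the novelty, since the naive estimate would force $T$ rather than $\Delta$ into the conductor and lose control in the overlap regime.

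Finally, with the dual sums correctly localized, I would apply Cauchy--Schwarz in the $m$-variable after discarding the $\GL(3)$ coefficients via the Rankin--Selberg bound $\sum_{m\leq X}|A(m,n)|^2 \ll_\phi (Xn)^{1+\varepsilon}$, open the square, and execute a further Poisson summation in $m$ to detect a congruence modulo $q_1 q_2$. Bounding the diagonal and off-diagonal contributions using the conductor drop from the previous step produces an estimate polynomial in $M,N,Q,T,\Delta$; optimizing $Q$ separately in the two regimes $\Delta\geq T^{5/6}$ and $T^{3/5}\leq\Delta\leq T^{5/6}$ yields the two bounds of the theorem, and combining these with convexity in the residual range gives the uniform bound $L(1/2+it,\phi\times f) \ll_{\phi,\varepsilon} T^{27/20+\varepsilon}$.
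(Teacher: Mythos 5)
Your proposal recovers the high-level architecture of the paper's argument: DFI delta method to decouple the two Fourier coefficients, dual application of $\GL(3)$ and $\GL(2)$ Voronoi, the key observation that the $\GL(2)$ Voronoi transform of the $x^{-it}$-twisted test function must be analysed in terms of the two rotated parameters $T=t+t_f$ and $T'=t-t_f$ (yielding a conductor drop in the overlap regime), Cauchy--Schwarz to remove the dual $\GL(3)$ coefficients, Poisson to detect the resulting congruence, and an optimization of $Q$ in the two regimes $\Delta\geq T^{5/6}$ and $T^{3/5}\leq\Delta\leq T^{5/6}$. This is indeed what the paper does, and the approximate functional equation reduction, the identification of the key lemma, and the two-regime optimization all match.

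There is, however, a muddled step that does not match the paper and would not do useful work as stated. You propose to apply Poisson summation on the $\GL(3)$ first coordinate (your $m$, the paper's $r$) immediately after the two Voronoi formulas. But the delta method only produces an additive character coupling the two large variables $n_1, n_2$; after $\GL(3)$ Voronoi the modulus $r$ enters the Kloosterman sum $S(r\bar d,\pm n_2;\, rq/n_1)$ and the Bessel argument, but $r$ itself is not twisted by any additive character, so Poisson on $r$ is a vacuous identity. The paper handles $r$ far more cheaply: by the Rankin--Selberg estimate the contribution of $r\geq R$ is already smaller than the target bound, so one may fix $r\leq R$ and sum over $r$ trivially at the end (see \eqref{eqn:S<<Sr}). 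Relatedly, in your final paragraph the Cauchy--Schwarz and the ``further Poisson summation in $m$'' are carried out, in the paper, in the dual $\GL(3)$ \emph{second} coordinate $n_2$ (after Voronoi this is the long variable $n_2\asymp rN^2X^3/(n_1^2Q^3)$, carrying the coefficient $A(n_2,n_1)$), not in the first coordinate. Lastly, a small imprecision: the conductor of the dual $\GL(2)$ sum after the key lemma is $\asymp P^2 T|T'|/N$, i.e.\ it drops from $T^2$ to $T\Delta$, not to a pure power of $\Delta$; this is exactly what Lemma \ref{lemma:G>>} establishes via Stirling applied to $\gamma_2^{\pm_1}(-1/2+i\tau-it)$ together with a stationary phase analysis of $\tilde g(1/2-i\tau+it)$, and it is the quantity $T\Delta$ (not $\Delta^2$) that propagates into the final optimization of $K=N/Q^2$.
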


\begin{remark}
  The bound \eqref{eqn:uniformbound} is uniformly subconvex on $||t|-t_f|\geq (|t|+t_f)^{4/5+\varepsilon}$.
  Our result is new even when $t=0$ and $t_f\rightarrow\infty$, which improves Kumar's result in \cite{Kumar}. The improvement comes from our treatment of the weight functions after applying summation formulas (see \S\ref{sec:Voronoi} and \S\ref{sec:Cauchy-Poisson}). We also avoid the use of the ``conductor lowering trick'' of Munshi \cite{Munshi2015circleIII,Munshi2018,Kumar} as in \cite{Aggarwal,LinSun,Huang}.  Note that in this case our subconvexity bounds are as strong as the case $t_f\ll1$ and $t\rightarrow\infty$.
\end{remark}

\begin{remark}
  By the functional equation of $L(s,\phi\times f)$, we can assume $t\geq0$ in the proof.
  The uniform bound \eqref{eqn:uniformbound} is a consequence of the subconvexity bounds in Theorem \ref{thm:main} and  the convexity bound \eqref{eqn:convexity} if $|t_f-t|\leq (t_f+|t|)^{3/5+\varepsilon}$. See \S \ref{subsec:L-functions} for more details.
  If $ |t_f-t|\leq  (t_f+|t|)^{1-\varepsilon}$, then we have the conductor drop phenomenon, which makes the subconvexity problem even harder. However, our result still gives subconvexity bounds when $|t_f-t| \geq (t_f+|t|)^{3/5+\varepsilon} $.
  One may improve our results when $|t_f-t|\leq  (t_f+|t|)^{1-\varepsilon}$ by more careful analysis of the integrals in the case $|t_f-t|^{1-\varepsilon} \ll \frac{NX}{PQ} \ll (t_f+|t|)^{1-\varepsilon}$. (See Lemmas \ref{lemma:G>>} and \ref{lem:I>>} below).
\end{remark}

\begin{remark}
  The method in this paper should work for both holomorphic and Maass forms.
  In this paper we focus on the Hecke--Maass cusp form case, as Kumar \cite{Kumar} gave details for holomorphic forms. Let $f\in H_k(1)$ be a weight $k$ holomorphic Hecke cusp form for $\SL(2,\mathbb{Z})$.
  Then our techniques may yield
  \[
    L(1/2+it,\phi \times f) \ll_{\phi,\varepsilon} (k+|t|)^{27/20+\varepsilon}.
  \]
\end{remark}

\begin{remark}
  One can combine our ideas here with the method in Huang--Xu \cite{HuangXu} to prove hybrid subconvexity bounds for twists of $\GL(3)\times \GL(2)$ $L$-functions in the spectral, $t$, and conductor aspects. See \cite{HuangXu} and the references therein for more backgrounds on such hybrid bounds. Let $\phi$ be a Hecke--Maass cusp form  for $\SL(3,\mathbb{Z})$  and $f$ a Hecke--Maass cusp form with the spectral parameter $t_f$ for $\SL(2,\mathbb{Z})$. For $t\in\mathbb{R}$ and $\chi$ a primitive Dirichlet character modulo prime $M$, our techniques should yield
  \[
    L(1/2+it,\phi\times f\times \chi) \ll_{\phi,\varepsilon} M^{23/16+\varepsilon} (|t|+t_f)^{27/20+\varepsilon}.
  \]
  By taking $\phi$ the minimal Eisenstein series for $\SL(3,\mathbb{Z})$, our techniques should yield
  \[
    L(1/2+it, f\times \chi) \ll_{\varepsilon} M^{23/48+\varepsilon} (|t|+t_f)^{9/20+\varepsilon}.
  \]
\end{remark}

\subsection{Sketch of the proof}

In this sketch,  we assume $t+t_f = T \asymp t-t_f$ and $t\geq0$.
We want to prove that $L(1/2+it,\phi\times f) \ll_{\phi,\varepsilon} T^{27/20+\varepsilon}$.
By using the approximate functional equation, we are led to consider the following sum
\[
   \sum_{n\geq1} A(1,n) \lambda_f(n) n^{-it} V\left(\frac{n}{N}\right),
\]
for some smooth function $V$  supported in $[1, 2]$ and satisfying $V^{(j)}(x) \ll_j 1$.
Hence to establish subconvexity we need to show cancellation in the above sum   for $N\ll T^{3+\varepsilon}$.

Our first step is to follow Munshi \cite{Munshi2018}.
We apply the delta method (see Lemma \ref{lemma:delta} below) directly to the above sum as a device for separation of the oscillation of the Fourier coefficients $A(1,n)$ and $\lambda_f(n) n^{-it}$, arriving at (we only consider the generic terms in this sketch)
\begin{align*}
  & \frac{1}{Q}\sum_{\substack{q\asymp Q}} \int_{x\asymp 1} \;\frac{1}{q}\; \sideset{}{^\star}\sum_{\substack{a\bmod{q} }}
   \sum_{n\asymp N} A(1,n) e\left(\frac{-an}{q}\right)
   e\left(\frac{-nx}{qQ}\right)   \\
  & \hskip 90pt \cdot
   \sum_{\substack{m\asymp N}} \lambda_f(m) e\left(\frac{m a}{q}\right)
   e\left(\frac{m x}{qQ}\right) m^{-it}
  \mathrm{d}x.
\end{align*}
Here $Q=\sqrt{\frac{N}{K}}$ for some $1\ll K= o(T)$. In fact, we have smooth weights for the sums over $m$ and $n$ and the integral over $x$.
Note that here we also avoid the use of ``conductor lowering trick'' in \cite{Munshi2018}.
The trivial bound is $O(N^2)$, so we need to save $N$ plus a little more.

Now it is standard to apply the Voronoi summation formulas for both $m$-sum and $n$-sum above.
We proceed as follows. The dual $n$-sum (after applying Voronoi) was treated in \cite{Huang} and we get
\[
   q  \sum_{\pm} \sum_{n_1|q} \sum_{n_2=1}^{\infty}
              \frac{A(n_2,n_1)}{n_1n_2} S\left(-\bar{a},\pm n_2;\frac{q}{n_1}\right)
              \Psi_x^{\pm}\left(\frac{n_1^2n_2}{q^3}\right),
\]
for certain weight function $\Psi_x^{\pm}$. Here $S(a,b;c)$ is the Kloosterman sum and $\bar{a} a\equiv 1 \pmod{q/n_1}$.
The sum over $n$ has ``conductor'' $(\frac{N}{Q^2}Q)^3=N^3/Q^3$, and hence the length of the dual sum is $N^2/Q^3$.
By using the stationary phase method, in the generic case (assuming $n_1=1$), this becomes
\[
  \frac{N^{3/2}}{Q^2}   \sum_{\substack{  n_2 \asymp N^2/Q^3}}
              \frac{A(n_2,1)}{n_2} S\left(-\bar{a},\pm n_2;q\right)
     e\left( \pm 2 \frac{n_2^{1/2} Q^{1/2}}{q x^{1/2}} \right).
\]
By the square root cancellation of the Kloosterman sums, we save
$N/(\frac{N^{3/2}}{Q^2} Q^{1/2}) = \frac{Q^{3/2}}{N^{1/2}}$ in this step.
Note that we have two oscillatory factors of $x$ which have different exponents for $x$. So it is a good place to apply the stationary phase method for the $x$-integral and this will save $\frac{N^{1/2}}{Q}$. Now we arrive at
\begin{equation}  \label{eqn:sum_n2&m}
    \frac{1}{Q}\sum_{\substack{q\asymp Q}} \frac{1}{q} \sideset{}{^\star}\sum_{\substack{a\bmod{q} }}
      \frac{N}{Q}
      \sum_{\substack{ n_2 \asymp \frac{  N^{2} } {Q^3} }}
              \frac{A(n_2, 1)}{ n_2} S\left(-\bar{a},\pm n_2;q\right)
   \sum_{\substack{m\asymp N}} \lambda_f(m) e\left(\frac{ma}{q}\right)
   e\left(\pm 3\frac{m^{1/3}  n_2^{1/3} }{q}  \right)
    m^{-it} .
\end{equation}

Consider the sum over $m$, which involves $\GL(2)$ Fourier coefficients, and has conductor
$Q^2T^2$ if $t\pm t_f \asymp T$. By applying the Voronoi summation formula, a typical term in the dual $m$-sum is
\[
  q  \sum_{m\geq1} \frac{ \lambda_{f}(m) }{m}
  e\left(\frac{\bar{a} m}{q}\right) G \left(\frac{m}{q^2} \right),
\]
where
\[
  G \left(y\right) \approx \int_{\mathbb{R}} (\pi^2 y)^{1/2-i\tau+it}
    \gamma_2(-1/2+i\tau-it) \tilde{g}(1/2-i\tau+it) \dd \tau,
\]
  \[
    \gamma_2 (-1/2+i\tau-it) = \frac{\Gamma(\frac{1/2+i\tau-iT}{2})\Gamma(\frac{1/2+i\tau-iT'}{2})} {\Gamma(\frac{1/2-i\tau+iT}{2})\Gamma(\frac{1/2-i\tau+iT'}{2})}
    +  \frac{\Gamma(\frac{3/2+i\tau-iT}{2})\Gamma(\frac{3/2+i\tau-iT'}{2})} {\Gamma(\frac{3/2-i\tau+iT}{2})\Gamma(\frac{3/2-i\tau+iT'}{2})},
  \]
\[
  \tilde{g}(s) = \int_{0}^{\infty} g(u) u^{s-1}\dd u
  , \quad g(u)= e\left(\pm 3\frac{u^{1/3}  n_2^{1/3} }{q}  \right)
    u^{-it} W\left(\frac{u}{N}\right)  ,
\]
for some nice smooth function $W$. Here we introduce two new parameters
\[ T=t+t_f \quad \textrm{and} \quad  T'=t-t_f. \]
The analysis of $G(y)$ is the main part of this paper.
Note that we also have some other expressions for $G(y)$. For example, the one with Bessel functions (see Lemma \ref{lem:VSF2}), which will also be used in the non generic cases to truncate the dual $m$-sum.
In our generic case, by applying the stationary phase method for $\tilde{g}(1/2-i\tau+it)$, we get a nice asymptotic formula of this and restrict to $\tau\asymp N/Q^2=K = o(T)$. With the assumption $T'\asymp T$, we can use the Stirling's formula to get a good approximation of $\gamma_2 (-1/2+i\tau-it)$. Finally, the stationary phase method can be applied to the $\tau$-integral, which restrict $y$  to $(yN)^{1/2}\asymp T$. By doing these, we will not lose any thing in this integral transform. This is the key to our improvement (\emph{cf.} \cite[\S 7]{Kumar}).
More importantly, our method do not really depend on the sizes of $t$ and $t_f$, but instead the sizes of $T$ and $T'$. So as long as $T'$ is not too small compared to $T$, we may get nontrivial bounds.
This is the reason why we can prove a uniform bound.
Those arguments show that the length of the dual $m$-sum is $Q^2T^2/N$ and the size of $G(y)$ is $O((yN)^{1/2})=O(T)$.
Now \eqref{eqn:sum_n2&m} essentially becomes
\begin{equation*}
   \frac{N^{1/2}  }{Q}
      \frac{N}{Q}
      \sum_{\substack{n_2 \asymp \frac{N^{2}} {Q^3} }}
              \frac{A(n_2,1)}{n_2}\\
    \cdot
              \sum_{\substack{q\asymp Q}} \;\frac{1}{q}
    \sum_{\substack{ m\asymp \frac{Q^2T^2}{N} }} \frac{ \lambda_{f}(m) }{m^{1/2}} \left(\frac{m}{q^2}\right)^{it}
  \mathcal{C}(n_2,m,q) \mathcal{I}(n_2,m,q) ,
\end{equation*}
where
\begin{equation*}
  \mathcal{C}(n_2,m,q)  =   \sideset{}{^\star}\sum_{\substack{a\bmod{q} }}
  e\left(\frac{\bar{a} m}{q}\right)
  S\left(-\bar{a},\pm n_2;q\right) \rightsquigarrow
  q \; e\left( \pm \frac{\bar{m}n_2}{q} \right)
\end{equation*}
and $\mathcal{I}(n_2,m,q)$ is certain nice oscillatory function with the phase function of size $N/Q^2$ with respect to $n_2$.
Here for the character sum $\mathcal{C}(n_2,m,q)$, the sum over $a$ becomes a Ramanujan sum.
So typically we have $\mathcal{C}(n_2,m,q)\ll q$ and we save $Q^{1/2}$. Hence we save $\frac{N}{QT}Q^{1/2} = \frac{N}{Q^{1/2}T}$ from the treatment of the $m$-sum. So we have saved $\frac{Q^{3/2}}{N^{1/2}}\cdot \frac{N^{1/2}}{Q}\cdot \frac{N}{Q^{1/2}T}=\frac{N}{T}$ in total, and it remains to save $T$ plus a little extra.

The next step involves taking Cauchy to get rid of the Fourier coefficients $A(n_2,1)$, but
this process also squares the amount we need to save, getting (essentially)
\[
   \frac{1}{T} \frac{N}{Q^{3/2}} \Bigg( \sum_{n_2 \asymp \frac{N^{2}} {Q^3}} \bigg|
     \sum_{\substack{q\asymp Q}}
    \sum_{\substack{ m\asymp \frac{Q^2T^2}{N} }}  \lambda_{f}(m)
  e\left( \pm \frac{\bar{m}n_2}{q} \right) \mathcal{I}(n_2,m,q)
   \bigg|^2 \Bigg)^{1/2}.
\]
 Opening the absolute value square, we get (essentially)
\[
   \frac{1}{T} \frac{N}{Q^{3/2}} \Bigg(
     \sum_{\substack{q\asymp Q}}
    \sum_{\substack{ m\asymp \frac{Q^2T^2}{N} }}
     \sum_{\substack{q'\asymp Q}}
    \sum_{\substack{ m'\asymp \frac{Q^2T^2}{N} }}
    \sum_{n_2 \asymp \frac{N^{2}} {Q^3}}
  e\left( \pm \frac{(\bar{m}q' - \bar{m}'q )n_2}{qq'}   \right)
  \mathcal{I}(n_2,m,q) \overline{\mathcal{I}(n_2,m',q')}  \Bigg)^{1/2}.
\]
We now apply the Poisson summation formula on the sum over $n_2$ modulo $qq'$,
arriving at
\[
   \frac{1}{T} \frac{N}{Q^{3/2}} \Bigg(
     \sum_{\substack{q\asymp Q}}
    \sum_{\substack{ m\asymp \frac{Q^2T^2}{N} }}
     \sum_{\substack{q'\asymp Q}}
    \sum_{\substack{ m'\asymp \frac{Q^2T^2}{N} }}
    \sum_{n\in \mathbb{Z}}
  \mathfrak{C}(n,q,q',m,m')
  \mathfrak{I}(n,q,q',m,m')   \Bigg)^{1/2},
\]
where
\[
  \mathfrak{C}(n,q,q',m,m') = \frac{1}{qq'} \sum_{b\bmod{qq'}}
  e\left( \pm \frac{(\bar{m}q' - \bar{m}'q + n)b}{qq'}   \right) ,
\]
and
\begin{align*}
  \mathfrak{I}(n,q,q',m,m') & = \int_{\mathbb{R}}
  \mathcal{I}(u,m,q) \overline{\mathcal{I}(u,m',q')} W\left(\frac{u}{N^2/Q^3}\right) e\left( \frac{un}{qq'}  \right) \dd u \\
  & = \frac{N^2}{Q^3} \int_{\mathbb{R}}
  \mathcal{I}\Big(\frac{N^2}{Q^3}\xi,m,q\Big)
  \overline{\mathcal{I}\Big(\frac{N^2}{Q^3}\xi,m',q'\Big)}
  W\left(\xi\right) e\left( \frac{N^2 n}{Q^3 qq'} \xi  \right) \dd \xi.
\end{align*}
For the zero frequency ($n=0$), the main contribution comes from terms with $q=q'$ and $m=m'$, in which case there is no further cancellation in the character sums. So we save $(Q \frac{Q^2T^2}{N})^{1/2} = \frac{Q^{3/2}T}{N^{1/2}}$.
Hence the final contribution from the zero frequency is $O(N^2/(\frac{N}{T}\frac{Q^{3/2}T}{N^{1/2}})) = O(N^{3/2}/Q^{3/2})$.

For the non zero frequencies ($n\neq0$), the main contribution comes from the terms in generic positions (that is, no restriction to reduce the size of the number of $q,q',m,m'$). As mentioned in Munshi \cite{Munshi2018}, we save more than the usual since the character sum boils down to an additive character. In generic case, the ``conductor'' is of the size $Q^2 \frac{N}{Q^2}=N$ and hence the length of the dual sum is $O( \frac{N}{N^2/Q^3})=O(Q^3/N)$.
By the stationary phase method, we save $(\sqrt{N/Q^2})^{1/2}$ in $\mathfrak{I}(n,q,q',m,m')$. We remark that for the non generic cases, we will apply $L^2$-norm estimate for $\mathcal{I}(u,m,q)$ instead of the stationary phase method. For the character sums $\mathfrak{C}(n,q,q',m,m')$, we save $(Q^2)^{1/2}$ since this is a Ramanujan sum with modulo $qq'$. So in this case we save $(\sqrt{N/Q^2})^{1/2}\cdot (Q^2)^{1/2} \cdot (N/Q^3)^{1/2} = N^{3/4}/Q$.
Hence the final contribution from the non zero frequencies is $O(N^2/(\frac{N}{T} \frac{N^{3/4}}{Q})) = O(N^{1/4} QT)$.

The best choice is $Q=\frac{N^{1/2}}{T^{2/5}}$, which gives a bound $O(N^{1/2}T^{27/20+\varepsilon})$ by using $N\ll T^{3+\varepsilon}$ and hence proves that $L(1/2+it,\phi\times f) \ll_{\phi,\varepsilon} T^{27/20+\varepsilon}$.

\subsection{Plan for this paper}
The rest of this paper is organized as follows.
In \S \ref{sec:preliminaries}, we introduce some notation and present some lemmas that we will need later.
The approximate functional equation allows us to reduce the subconvexity problem to estimate certain convolution sums.
In \S \ref{sec:applying_delta}, we apply the delta method to the convolution sums.
In \S \ref{sec:Voronoi}, we apply the Voronoi summation formulas and estimate the integral transforms by the stationary phase method.
In \S \ref{sec:Cauchy-Poisson}, we apply the Cauchy--Schwarz inequality and Poisson summation formula, and then analyse the character sums and integrals.
Then we deal with the contribution from the zero frequency in \S \ref{sec:zero-freq}.
The contribution from non zero frequencies is bounded in \S \ref{sec:non-zero-freq-I} and \S \ref{sec:non-zero-freq-II}.
Finally, in \S \ref{sec:Proof} we complete the proof of our main theorem.

\medskip
\textbf{Notation.}
Throughout the paper, $\varepsilon$ is an arbitrarily small positive number;
all of them may be different at each occurrence.
The weight functions $U,\ V,\ W$ may also change at each occurrence.
As usual, $e(x)=e^{2\pi i x}$.
We use $y\asymp Y$ to mean that $c_1 Y\leq |y|\leq c_2 Y$ for some positive constants $c_1$ and $c_2$,
and
$q\sim P$ means $P<q\leq 2P$.

\section{Preliminaries}\label{sec:preliminaries}

\subsection{Automorphic forms}

Let  $f\in \mathcal{B}_0(1)$ be a Hecke--Maass cusp form with the spectral parameter $t_f$ for $\SL(2,\mathbb{Z})$, with the normalized Fourier coefficients $\lambda_f(n)$.
Let $\theta_2$ be the bound toward to the Ramanujan conjecture and we have $\theta_2\leq 7/64$ due to Kim--Sarnak \cite{Kim2003}.
Rankin--Selberg theory gives (see Iwaniec \cite[Lemma 1]{iwaniec1990spectral})
\begin{equation}\label{eqn:RS2}
  \sum_{n\leq N} |\lambda_f(n)|^2 \ll t_f^{\varepsilon} N.
\end{equation}

Let $\phi$ be a Hecke--Maass cusp form of type $(\nu_1,\nu_2)$ for $\SL(3,\mathbb{Z})$
with the normalized Fourier coefficients $A(r,n)$.
Rankin--Selberg theory gives
\begin{equation}\label{eqn:RS3}
  \sum_{r^2n\leq N} |A(r,n)|^2 \ll_\phi   N.
\end{equation}

We record the Hecke relation
\[
  A(r,n) = \sum_{d\mid (r,n)} \mu(d) A\left(\frac{r}{d},1\right) A\left(1,\frac{n}{d}\right)
\]
which follows from M\"obius inversion and \cite[Theorem 6.4.11]{goldfeld2006automorphic}.
Hence we have the individual bounds
\[
  A(r,n) \ll (rn)^{\theta_3+\varepsilon},
\]
where $\theta_3\leq 5/14$ is the bound toward to the Ramanujan conjecture on $\GL(3)$ (\cite{Kim2003}).
Thus we have
\begin{equation}\label{eqn:RS3-1}
  \sum_{n\sim N} |A(r,n)| \ll \sum_{n_1\mid r^\infty} \sum_{\substack{n\sim N/n_1 \\ (n,r)=1}} |A(r,nn_1)| \leq \sum_{n_1\mid r^\infty}|A(r,n_1)| \sum_{\substack{n\sim N/n_1 \\ (n,r)=1}} |A(1,n)| \ll r^{\theta_3+\varepsilon} N
\end{equation}
and
\begin{equation}\label{eqn:RS3-2}
  \sum_{n\sim N} |A(r,n)|^2
  \ll \sum_{n_1\mid r^\infty} \sum_{\substack{n\sim N/n_1 \\ (n,r)=1}} |A(r,nn_1)|^2
  \leq \sum_{n_1\mid r^\infty}|A(r,n_1)|^2 \sum_{\substack{n\sim N/n_1 \\ (n,r)=1}} |A(1,n)|^2
  \ll r^{2\theta_3+\varepsilon} N.
\end{equation}
Those bounds depend on $\phi$ and $\varepsilon$.
Here we have used \eqref{eqn:RS3} and the fact
$
  \sum_{d\mid r^\infty} d^{-\sigma} \ll r^\varepsilon, \; \textrm{for $\sigma>0$}.
$

\subsection{The approximate functional equation} \label{subsec:L-functions}

The Rankin--Selberg $L$-function $L(s,\phi\times f)$ has the following functional equation
\[
  \Lambda(s,\phi\times f) = \epsilon_{\phi\times f} \Lambda(1-s,\tilde\phi\times f),
\]
where
\[
  \Lambda(s,\phi\times f) = \pi^{-3s} \prod_{j=1}^{3} \prod_{\pm}
  \Gamma\left(\frac{s-\alpha_j\pm i t_f}{2}\right) L(s,\phi\times f)
\]
is the completed $L$-function and $\epsilon_{\phi\times f}$ is the root number which has absolute value one.
Here $\alpha_j$ are the Langlands parameters of $\phi$, and $\tilde\phi$ is the dual form of $\phi$.
We have the following approximate functional equation.

\begin{lemma}\label{lemma:AFE}
  Assume $t\geq0$. Let $T=t+t_f$ and $T'=t-t_f$. Then we have
  \[
    L(1/2+it,\phi\times f) \ll_{\phi,\varepsilon} T^\varepsilon
    \sup_{ 1 \leq N \leq  T^{3/2+\varepsilon}(|T'|+1)^{3/2}} \frac{|S(N)|}{N^{1/2}} +  T^{-2021},
  \]
  where $S(N)$ is a sum of the form
  \[
    S(N) := \sum_{r\geq1}\sum_{n\geq1} A(r,n) \lambda_f(n) (r^2n)^{-it} V\left(\frac{r^2n}{N}\right)
  \]
  for some smooth function $V$ such that $\int_{\mathbb{R}}V(x)\dd x=1$, $\supp V \subset [1,2]$, and $V^{(j)}(x) \ll_j 1$ for any integer $j\geq0$.
\end{lemma}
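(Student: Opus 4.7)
The plan is a standard Mellin--Barnes contour shift yielding the approximate functional equation, followed by a smooth dyadic decomposition. First I would take an even entire test function $G$ with $G(0)=1$ and rapid decay on vertical strips (for instance $G(u)=e^{u^{2}}$, multiplied by a fixed polynomial if needed to cancel poles of the archimedean factors on the shifted contour), and consider
\[
\mathcal{I}(X):=\frac{1}{2\pi i}\int_{(3)}L(1/2+it+u,\phi\times f)\,G(u)\,X^{u}\,\frac{du}{u}
\]
for a parameter $X>0$. Expanding the Dirichlet series on $\Re(u)=3$ and interchanging sum and integral gives
\[
\mathcal{I}(X)=\sum_{r,n\geq 1}\frac{A(r,n)\lambda_{f}(n)}{(r^{2}n)^{1/2+it}}\,W_{+}(r^{2}n/X),
\]
where $W_{+}(y)=\frac{1}{2\pi i}\int_{(3)}G(u)y^{-u}\,du/u$ is smooth and satisfies $y^{j}W_{+}^{(j)}(y)\ll_{j,A}(1+y)^{-A}$ for every $A\geq 0$.

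Next I would shift the contour to $\Re(u)=-3$, picking up the simple pole at $u=0$ whose residue is $L(1/2+it,\phi\times f)$, and apply the functional equation $\Lambda(s,\phi\times f)=\epsilon_{\phi\times f}\Lambda(1-s,\tilde\phi\times f)$ to the shifted integral. The integrand then carries the gamma ratio $\gamma(1/2-it-u)/\gamma(1/2+it+u)$, which Stirling's formula writes as $Q(t,t_{f})^{-u}H(u)$ for a holomorphic $H$ with polynomially bounded derivatives, uniformly in $T$ and $|T'|+1$. Here the analytic conductor is $Q(t,t_{f})\asymp T^{3}(|T'|+1)^{3}$, because at $s=1/2+it$ three of the six gamma factors have arguments of magnitude $\asymp T$ and the other three of magnitude $\asymp |T'|+1$. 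After the substitution $u\mapsto -u$ this produces a dual sum with coefficients $A(n,r)$ of $\tilde\phi$, twist $(r^{2}n)^{-1/2+it}$, and cutoff $W_{-}(r^{2}nX/Q(t,t_{f}))$; choosing $X=T^{3/2}(|T'|+1)^{3/2}=Q(t,t_{f})^{1/2}$ balances the two truncations at the common scale $r^{2}n\ll T^{3/2+\varepsilon}(|T'|+1)^{3/2}$, with the tails contributing only $O(T^{-A})$ for any $A$, absorbable into the $T^{-2021}$ error.

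A smooth dyadic partition of unity in $r^{2}n$ then decomposes each sum into $O(\log T)$ pieces of the form $S(N)$ in the lemma, with $N$ a dyadic scale bounded by $T^{3/2+\varepsilon}(|T'|+1)^{3/2}$; the dual sum converts to the same form after passing to $\tilde\phi$ (taking a complex conjugate to turn $(r^{2}n)^{+it}$ into $(r^{2}n)^{-it}$), which only affects the $\phi$-dependent implicit constant because $\tilde\phi$ is itself a Hecke--Maass cusp form for $\SL(3,\mathbb{Z})$. Dividing through by $(r^{2}n)^{1/2}\asymp N^{1/2}$ produces the asserted bound $|S(N)|/N^{1/2}$, and the normalization $\int V=1$ can be enforced by rescaling the dyadic bump at the cost of a harmless constant. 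The only nontrivial technical point is the uniform Stirling analysis of $H(u)$ across the conductor-drop regime $|T'|\ll T$, where the six gamma factors split into two groups of very different sizes and Stirling must be applied separately to each group; the factor $(|T'|+1)^{3}$ in the conductor precisely captures the smaller group and is responsible for the hybrid truncation length appearing in the lemma. This bookkeeping is the only real content; everything else is standard.
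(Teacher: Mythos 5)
Your proposal is correct and follows the same standard route that the paper invokes by citing Iwaniec--Kowalski \S 5.2: a Mellin--Barnes integral against a rapidly decaying even test function, contour shift picking up the residue at $u=0$, the functional equation for the shifted integral, and Stirling to extract the analytic conductor $\asymp T^3(|T'|+1)^3$ from the six $\Gamma$-factors (three with imaginary part $\asymp T$ and three with imaginary part $\asymp |T'|+1$), followed by a dyadic decomposition. The only point worth emphasizing, which you correctly flag as the crux, is the uniform Stirling bookkeeping in the conductor-drop regime $|T'|\ll T$; your identification of the hybrid conductor and the resulting truncation length $r^2n\ll T^{3/2+\varepsilon}(|T'|+1)^{3/2}$ is exactly right.
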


\begin{proof}
  See \cite[\S5.2]{IwaniecKowalski2004analytic}.
\end{proof}

If $|T'| \leq T^{3/5}$, then Lemma \ref{lemma:AFE} gives
\[
    L(1/2+it,\phi \times f) \ll_{\phi,\varepsilon} T^{3/4+\varepsilon} (|T'|+1)^{3/4}
    \ll T^{6/5+\varepsilon},
\]
which is better than \eqref{eqn:uniformbound}. Hence to prove Theorem \ref{thm:main}, we only need to consider the case $|T'| \geq T^{3/5}$, which we assume from now on. We will always write
\[
  T=t+t_f \quad \textrm{and} \quad  T'=t-t_f.
\]

We first estimate the contribution from large values of $r$. By \eqref{eqn:RS2} and \eqref{eqn:RS3-2} we have
\begin{align*}
  \sum_{r\geq R}  & \left|\sum_{n\geq1} A(r,n) \lambda_f(n)   (r^2n)^{-it} V\left(\frac{r^2n}{N}\right)\right| \\
   & \ll \sum_{ R \leq r \ll \sqrt{N}} \left(\sum_{n\asymp N/r^2} |A(r,n)|^2 \right)^{1/2} \left(\sum_{n\asymp N/r^2}  |\lambda_f(n)|^2\right)^{1/2} \\
   & \ll \sum_{ R \leq r \ll \sqrt{N}} r^{\theta_3+\varepsilon} \frac{N}{r^2}
  \ll N \sum_{ R \leq r \ll \sqrt{N}} r^{-23/14+\varepsilon}
  \ll N^{1/2} T^{3/4+\varepsilon} |T'|^{3/4} R^{-9/14},
\end{align*}
for $N \ll T^{3/2+\varepsilon}|T'|^{3/2}$.
Take
\begin{equation}\label{eqn:R}
   R = \left\{ \begin{array}{ll}
     |T'|^{77/180}T^{-7/36},  & \textrm{ if  $T^{5/6} \leq |T'|\leq T$}, \\
     |T'|^{25/36}T^{-15/36},  & \textrm{ if $T^{3/5}\leq |T'|\leq T^{5/6}$}.
   \end{array}\right.
\end{equation}
The contribution from those terms to $L(1/2+it,\pi\times f )$ is bounded by $T^{3/4+\varepsilon} |T'|^{3/4} R^{-9/14}$, which is good enough for Theorem \ref{thm:main}.
Hence we get
\begin{equation}\label{eqn:S<<Sr}
    L(1/2+it,\phi\times f) \ll T^\varepsilon \sum_{r\leq R} \frac{1}{r}
    \sup_{  N \leq  \frac{T^{3/2+\varepsilon}|T'|^{3/2}}{r^2}} \frac{|S_r(N)|}{N^{1/2}} + T^{7/8+\varepsilon} |T'|^{19/40} +  T^{57/56+\varepsilon} |T'|^{17/56}  ,
\end{equation}
where
\[
  S_r(N) := \sum_{n\geq1} A(r,n) \lambda_f(n) n^{-it} V\left(\frac{n}{N}\right).
\]
Thus to prove Theorem \ref{thm:main}, we only need to prove the following proposition.

\begin{proposition}\label{prop:SrN}
  Assume  $|T'| \geq T^{3/5}$. For $r\leq R$ and
  $ N \leq  \frac{T^{3/2+\varepsilon}|T'|^{3/2}}{r^2}$, we have
  \[
    S_r(N) \ll N^{1/2+\varepsilon} \left(T^{7/8} |T'|^{19/40} +  T^{57/56} |T'|^{17/56} \right).
  \]
\end{proposition}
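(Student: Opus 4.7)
The plan is to execute the strategy outlined in \S1.1, uniformly in $T$ and $T'$ under the standing hypothesis $|T'|\geq T^{3/5}$. The first move is to apply the DFI delta method (Lemma \ref{lemma:delta}) to $S_r(N)$ with modulus parameter $Q=\sqrt{N/K}$ for some $1\ll K=o(T)$ to be optimized, which separates the oscillations of $A(r,n)$ and $\lambda_f(n)n^{-it}$ into two sums of length $N$ tied together by additive characters modulo $q\asymp Q$ and a smooth $x$-integral with $x\asymp 1$. I then apply the GL(3) Voronoi formula to the $n$-sum against $A(r,n)$; in the generic range $n_1=1$, $(q,r)=1$ this yields a dual sum of length $\asymp N^2/(Q^3 r)$, and executing the $x$-integral via stationary phase produces a new oscillatory phase $e(\pm 3 m^{1/3}n_2^{1/3}/q)$ in the remaining variables.

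Next, I apply the GL(2) Voronoi formula to the $m$-sum against $\lambda_f(m) m^{-it} e(\pm 3 m^{1/3}n_2^{1/3}/q)$. The resulting integral transform $G(y)$, whose gamma-factor $\gamma_2(-1/2+i\tau-it)$ has two independent spectral parameters $T$ and $T'$, is the crux of the argument. I analyse it by applying stationary phase to $\tilde g(1/2-i\tau+it)$ to restrict $\tau$, then Stirling's formula to $\gamma_2$ (which is valid precisely because $|T'|\geq T^{3/5}$), and finally stationary phase in the $\tau$-integral, obtaining $G(y)=O(\sqrt{T|T'|})$ supported at $(yN)^{1/2}\asymp \sqrt{T|T'|}$. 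Thus the dual $m$-sum has effective length $\asymp Q^2 T|T'|/N$ and no loss is incurred in this transform. The $a$-sum then collapses to a Ramanujan-type sum of size $O(q)$.

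Afterwards I apply Cauchy--Schwarz in $n_2$ to eliminate $A(n_2,1)$ via \eqref{eqn:RS3-2}, open the square, and apply Poisson summation on $n_2$ modulo $qq'$. The zero frequency $n=0$ is controlled by the diagonal contribution $q=q'$, $m=m'$; for non-zero frequencies I combine a stationary-phase evaluation of $\mathfrak{I}(n,q,q',m,m')$ in the generic range with $L^2$-estimates for $\mathcal{I}$ in degenerate ranges, together with Ramanujan-sum bounds for the character sum $\mathfrak{C}(n,q,q',m,m')$. Balancing the zero- and non-zero-frequency contributions yields an optimal $Q$ whose form depends on the ratio $|T'|/T$; making two such choices in the subranges $|T'|\geq T^{5/6}$ and $T^{3/5}\leq |T'|\leq T^{5/6}$ recovers the two exponents $T^{7/8}|T'|^{19/40}$ and $T^{57/56}|T'|^{17/56}$ respectively, matching the bound claimed in the proposition.

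The hard part will be carrying out the $G(y)$ analysis uniformly in $T$ and $T'$: Stirling on $\gamma_2$ must be performed so that the two pairs of stationary points stay well separated, which forces $|T'|\gtrsim T^{3/5}$, and the companion stationary phase in $\tau$ must pin down the effective support precisely to avoid losing a factor of roughly $\sqrt{T/|T'|}$ at this stage. A parallel delicate analysis is required for the $\mathfrak{I}$-integral in the Poisson step, so that every integral transform achieves its Lindel\"of-optimal saving uniformly in both spectral parameters; this uniformity is precisely what distinguishes the argument from the $t_f$-only or $t$-only cases in the prior literature.
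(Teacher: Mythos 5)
Your proposal correctly identifies the overall strategy of the paper (DFI delta method, GL(3) Voronoi, stationary phase in $x$, GL(2) Voronoi, Cauchy--Schwarz, Poisson in $n_2$, balancing the zero and non-zero frequencies with $Q=\sqrt{N/K}$), and the key technical observation that the $G(y)$ transform should be pinned down at $(yN)^{1/2}\asymp\sqrt{T|T'|}$ with size $O(\sqrt{T|T'|})$ is exactly the right heuristic. However, there is a genuine gap in the part you yourself flag as ``the hard part'': you describe a single uniform treatment of $G(y)$ via Stirling followed by stationary phase in $\tau$, and you attribute the validity of Stirling to the hypothesis $|T'|\geq T^{3/5}$. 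This is not correct. After applying Mellin inversion the $\tau$-integral is effectively supported on $\tau\asymp \frac{NX}{PQ}$ (from the stationary-phase analysis of $\tilde g$), and the Stirling asymptotics for $\gamma_2(-1/2+i\tau-it)$ require $\tau$ to stay away from the poles at $\tau\approx T'$; the relevant threshold is therefore $\frac{NX}{PQ}$ versus $|T'|^{1-\varepsilon}$, not $|T'|$ versus $T^{3/5}$. When $\frac{NX}{PQ}\gg|T'|^{1-\varepsilon}$ the stationary point of the $\tau$-integral is in conflict with the shape of $\gamma_2$ near $\tau=T'$ and your argument breaks down: there is no clean localization $yN\asymp T|T'|$. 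The paper therefore splits the analysis into the two regimes of Lemma \ref{lemma:G>>} (parts (ii) and (iii)); in the regime $\frac{NX}{PQ}\gg|T'|^{1-\varepsilon}$ it abandons Stirling and the $\tau$-stationary phase entirely, keeps a weaker representation of $G$ as a short $\tau$-integral of modulus $O(1)$, and compensates downstream by an $L^2$-mean-square estimate on $\mathcal I$ inside the Poisson step (Lemma \ref{lem:I>>}) instead of the pointwise stationary-phase bound on $\mathfrak I$ that you invoke. Without this case split, the argument as you describe it fails on the range $\frac{NX}{PQ}\gg|T'|^{1-\varepsilon}$, which is non-empty for large $N$.

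A secondary omission, less central but still necessary: you take for granted that the $x$-integral always oscillates, i.e.\ that $\frac{NX}{PQ}\gg T^\varepsilon$. This is not automatic because both $x$ and $q$ are being dyadically decomposed, and the non-oscillatory range $\frac{NX}{PQ}\ll T^\varepsilon$ must be handled separately (the paper does this crudely in \S\ref{sec:Voronoi} using Lemma \ref{lemma:Psi}(iii) and Lemma \ref{lemma:G<<}, producing the bound \eqref{eqn:SrN<<1}, which is what forces the lower bound \eqref{eqn:K>>} on $K$). You should also note that the dual GL(3) length in the generic $n_1=1$ range is $\asymp rN^2/Q^3$, not $N^2/(Q^3 r)$, although this does not affect the endgame because the $r$-sum is short.
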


\subsection{Summation formulas}

We first recall
the Poisson summation formula over an arithmetic progression.
\begin{lemma}\label{lem:Poisson}
  Let $\beta\in\mathbb{Z}$ and $c\in \mathbb{Z}_{\geq1}$. For a Schwartz function $f:\mathbb{R}\rightarrow \mathbb{C}$, we have
  \[
    \sum_{\substack{n\in\mathbb{Z}\\ n\equiv \beta \bmod{c}}} f(n) = \frac{1}{c} \sum_{n\in\mathbb{Z}} \hat{f}\left(\frac{n}{c}\right) e\left(\frac{n\beta}{c}\right),
  \]
  where $\hat{f}(y)=\int_{\mathbb{R}} f(x) e(-xy)\dd x$ is the Fourier transform of $f$.
\end{lemma}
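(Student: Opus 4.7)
The plan is to reduce to the classical Poisson summation formula $\sum_{m\in\mathbb{Z}} g(m) = \sum_{n\in\mathbb{Z}} \hat g(n)$ by parameterizing the arithmetic progression. First I would write every integer $n$ with $n\equiv \beta \pmod c$ uniquely as $n=\beta+mc$ with $m\in\mathbb{Z}$, so that
\[
  \sum_{\substack{n\in\mathbb{Z}\\ n\equiv \beta \bmod c}} f(n) \;=\; \sum_{m\in\mathbb{Z}} g(m), \qquad g(m):=f(\beta+mc).
\]
Since $f$ is Schwartz and $c\geq 1$, the function $g$ is also Schwartz on $\mathbb{R}$, so classical Poisson summation applies directly.

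Next I would compute the Fourier transform $\hat g$. A linear change of variables $u=\beta+xc$ gives
\[
  \hat g(y) \;=\; \int_{\mathbb{R}} f(\beta+xc)\,e(-xy)\,\dd x \;=\; \frac{1}{c}\,e\!\left(\frac{\beta y}{c}\right) \int_{\mathbb{R}} f(u)\,e\!\left(-\frac{uy}{c}\right)\dd u \;=\; \frac{1}{c}\,e\!\left(\frac{\beta y}{c}\right)\hat f\!\left(\frac{y}{c}\right).
\]
Applying classical Poisson to $g$ and substituting this expression for $\hat g(n)$ yields the identity as stated. There is no real obstacle here: the only points worth checking are the absolute convergence of both sides (immediate from $f$ being Schwartz, giving rapid decay of $\hat f$ as well) so that the swaps and termwise evaluation are justified, and the sign convention for $e(x)=e^{2\pi i x}$ matching the Fourier transform normalization used in the statement.
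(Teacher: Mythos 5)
Your proof is correct and is the standard derivation. The paper itself does not prove the lemma but simply cites \cite[Eq.\ (4.24)]{IwaniecKowalski2004analytic}; the argument given there is essentially the same reduction to ordinary Poisson summation via the substitution $n=\beta+mc$ and the corresponding rescaling of the Fourier transform, so your route matches the intended one.
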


\begin{proof}
  See e.g. \cite[Eq. (4.24)]{IwaniecKowalski2004analytic}
\end{proof}

Now we turn to the Voronoi summation formula for $\SL(2,\mathbb{Z})$.
Let $f$ be a weight zero Hecke--Maass cusp form for $\SL(2,\mathbb{Z})$ with spectral parameter $t_f$.
Let $\epsilon_f=\pm1$ depending on $f$ even or odd.
Let $g(x),\ \psi(x)$ be smooth functions with compact support on the positive reals.
Let $q\in\mathbb{Z}_{\geq1}$ and $a\in\mathbb{Z}$ with $(q,a)=1$. Define $\bar{a}$ as the inverse of $a$ modulo $q$, i.e., $a\bar{a}\equiv 1 \pmod{q}$.
\begin{lemma}\label{lem:VSF2}
  With the notation as above.  Then we have
  \begin{equation}\label{eqn:VSF2}
  \sum_{n\geq1} \lambda_f(n) e\left(\frac{an}{q}\right) g(n)
  = q \sum_{\pm}  \sum_{n\geq1} \frac{ \lambda_{f}(n) }{n}
  e\left(\mp\frac{\bar{a} n}{q}\right) G^\pm \left(\frac{n}{q^2} \right),
  \end{equation}
  where
  \begin{equation}\label{eqn:G}
    \begin{split}
      G^\pm(y) & =  \frac{\epsilon_f^{(1\mp 1)/2}}{4\pi^2 i}  \int_{(\sigma)} (\pi^2 y)^{-s} \left( \frac{\Gamma(\frac{1+s+it_f}{2})\Gamma(\frac{1+s-it_f}{2})} {\Gamma(\frac{-s+it_f}{2})\Gamma(\frac{-s-it_f}{2})} \mp \frac{\Gamma(\frac{2+s+it_f}{2})\Gamma(\frac{2+s-it_f}{2})} {\Gamma(\frac{1-s+it_f}{2})\Gamma(\frac{1-s-it_f}{2})} \right) \tilde{g}(-s) \dd s \\
      & = \epsilon_f^{(1\mp 1)/2} y
      \int_{0}^{\infty} g(x) J^\pm_{f}\left(  4\pi\sqrt{yx} \right) \dd x,
    \end{split}
  \end{equation}
  with $\sigma>\theta_2-1$  and $\tilde{g}(s) = \int_{0}^{\infty} g(x) x^{s-1} \dd x$ the Mellin transform of $g$, and
  \[
    J^+_{f}\left( x \right) = \frac{-\pi}{\sin(\pi i t_f)} \left( J_{2it_f}(x) - J_{-2it_f}(x) \right), \qquad
    J^-_f(x) = 4\cosh(\pi t_f) K_{2it_f}(x).
  \]
\end{lemma}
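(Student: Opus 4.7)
The plan is to combine Mellin inversion with the functional equation of the additively twisted Dirichlet series
\[
  L_{a/q}(s,f) := \sum_{n\geq 1} \lambda_f(n)\, e\!\left(\frac{an}{q}\right) n^{-s},
\]
which converges absolutely for $\Re(s)$ sufficiently large. First I would apply Mellin inversion, writing $g(n) = \frac{1}{2\pi i}\int_{(\sigma)} \tilde{g}(s) n^{-s}\, ds$ for $\sigma$ large, and interchange sum and integral; since $g$ is smooth of compact support in $(0,\infty)$, its Mellin transform is entire and decays faster than any polynomial on vertical lines, so the swap is justified and yields
\[
  \sum_{n\geq 1} \lambda_f(n)\, e\!\left(\frac{an}{q}\right) g(n) = \frac{1}{2\pi i}\int_{(\sigma)} \tilde{g}(s)\, L_{a/q}(s,f)\, ds.
\]

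The substantive input is the functional equation of $L_{a/q}(s,f)$, which relates it to $L_{-\bar{a}/q}(1-s,f)$ times a ratio of gamma factors and a power of $q$. To derive it, I would split $f$ into its even and odd parts (parametrized by $\epsilon_f=\pm 1$), decompose $e(an/q)$ into cosine and sine, and exploit the automorphy of $f$ under $\SL(2,\mathbb{Z})$: a matrix $\gamma\in\SL(2,\mathbb{Z})$ with bottom row $(q,*)$ and top row $(a,*)$ sends the Whittaker expansion of $f$ near the cusp $a/q$ to the expansion at $-\bar{a}/q$. Taking Mellin transforms of both expansions and equating produces, after standard Gamma manipulations, precisely the two summands with $\mp$ signs in \eqref{eqn:G}. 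With the functional equation in hand, I shift the contour from $\Re(s)=\sigma$ to $\Re(s)=-\sigma'$ for $\sigma'>|\theta_2|$; no poles are crossed since $L_{a/q}(s,f)$ is entire, and the rapid decay of $\tilde{g}$ kills the horizontal segments. Substituting the functional equation and changing variables $s\mapsto -s$ gives the first line of \eqref{eqn:G}.

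For the Bessel-function representation, I would compare Mellin transforms. The classical Mellin--Barnes formulas for $K_{2it_f}$ and for $J_{2it_f}-J_{-2it_f}$ express $J^\pm_f(4\pi\sqrt{yx})$ as Mellin integrals; applying the reflection formula $\Gamma(z)\Gamma(1-z)=\pi/\sin(\pi z)$ to the denominator gammas shows that these kernels match the one appearing in the contour integral representation of $G^\pm$. Multiplying by $g(x)$, integrating in $x$, and interchanging the two integrals then reproduces the second line of \eqref{eqn:G}.

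The main obstacle will be the bookkeeping in the functional equation of $L_{a/q}(s,f)$: one must track the two parity components of $f$, the sign coming from passing to the dual character $-\bar{a}/q$, and the four gamma factors together with their reflection-formula companions so that the combination in \eqref{eqn:G} emerges with the correct $\pm$ pattern. The analytic issues---absolute convergence, interchanging sum with integral, shifting contours---are routine given the rapid decay of $\tilde{g}$ and the polynomial growth of $L_{a/q}(s,f)$ on vertical lines.
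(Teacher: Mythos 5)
The paper does not actually prove this lemma---the ``proof'' given is simply a citation to Miller--Schmid~\cite{MillerSchmid2006automorphic} and to Kowalski--Michel--VanderKam~\cite[Appendix A]{KowalskiMichelVanderKam2002rankin}. Your sketch essentially reconstructs the argument of the latter reference: Mellin-invert the test function, insert the functional equation of the additively twisted series $L_{a/q}(s,f)$ (derived from modularity under the matrix in $\SL(2,\mathbb{Z})$ with bottom row $(q,*)$, which moves the cusp $a/q$ to $-\bar a/q$), shift contours, and then identify the resulting gamma-ratio kernels with $J^{\pm}_{f}$ via Mellin--Barnes. That is the right route and the analytic steps you flag as routine (interchange of sum and integral, contour shift, horizontal-segment decay) are indeed routine, given that $\tilde g$ is entire and rapidly decaying.

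One point of wording should be fixed: you say you would ``split $f$ into its even and odd parts (parametrized by $\epsilon_f=\pm 1$).'' A Hecke--Maass cusp form for $\SL(2,\mathbb{Z})$ is already an eigenfunction of the reflection $x\mapsto -x$, hence is \emph{itself} even or odd; $\epsilon_f$ records that fixed parity, it does not label a decomposition of $f$. What one actually decomposes is the additive twist, $e(an/q)=\cos(2\pi an/q)+i\sin(2\pi an/q)$; the cosine and sine twists transform with opposite signs under $a\mapsto -\bar a$, and combining this with the fixed parity $\epsilon_f$ produces the two $\mp$ terms and the overall $\epsilon_f^{(1\mp 1)/2}$ factor in~\eqref{eqn:G}, as well as the $J$-Bessel versus $K$-Bessel dichotomy in the second line. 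With that correction your outline matches the cited derivation and would, if expanded with the bookkeeping you acknowledge, yield the lemma.
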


\begin{proof}
  See \cite[eq. (1.12) \& (1.15)]{MillerSchmid2006automorphic} and \cite[Appendix A]{KowalskiMichelVanderKam2002rankin}. 
\end{proof}

%

We also recall the Voronoi summation formula for $\SL(3,\mathbb{Z})$.
Let $\psi$ be a smooth compactly supported function on $(0,\infty)$,
and let $\tilde{\psi}
$
be the Mellin transform of $\psi$.
For $\sigma>5/14$, we define
\begin{equation}\label{eqn:Psi}
    \Psi^{\pm}(z) := z \frac{1}{2\pi i} \int_{(\sigma)} (\pi^3z)^{-s} \gamma^\pm(s) \tilde{\psi}(1-s)\dd s ,
\end{equation}
with
\begin{equation}\label{eqn:gamma^pm}
  \gamma^\pm(s) := \prod_{j=1}^{3}
    \frac{\Gamma\left(\frac{s+\alpha_j}{2}\right)} {\Gamma\left(\frac{1-s-\alpha_j}{2}\right)}
    \pm \frac{1}{i} \prod_{j=1}^{3}
    \frac{\Gamma\left(\frac{1+s+\alpha_j}{2}\right)} {\Gamma\left(\frac{2-s-\alpha_j}{2}\right)},
\end{equation}
where $\alpha_j$ are the Langlands parameters of $\phi$ as above.
Note that changing $\psi(y)$ to $\psi(y/N)$ for a positive real number $N$ has the effect of
changing $\Psi^\pm(z)$ to $\Psi^\pm(zN)$.
The Voronoi formula on $\GL(3)$ was first proved by Miller--Schmid~\cite{MillerSchmid2006automorphic}.
The present version is due to Goldfeld--Li~\cite{goldfeld2006voronoi} with slightly renormalized variables (see Blomer \cite[Lemma 3]{blomer2012subconvexity}).
\begin{lemma}\label{lemma:VSF3}
  Let $c,d,\bar{d}\in\mathbb Z$ with $c\neq0$, $(c,d)=1$, and $d\bar{d}\equiv1\pmod{c}$.
  Then we have
  \begin{equation*}
    \begin{split}
         \sum_{n=1}^{\infty} A(r,n)e\left(\frac{n\bar{d}}{c}\right)\psi(n)
         = \frac{c\pi^{3/2}}{2} \sum_{\pm} \sum_{n_1|cr} \sum_{n_2=1}^{\infty}
              \frac{A(n_2,n_1)}{n_1n_2} S\left(rd,\pm n_2;\frac{rc}{n_1}\right)
              \Psi^{\pm}\left(\frac{n_1^2n_2}{c^3r}\right),
    \end{split}
  \end{equation*}
  where $S(a,b;c) := \mathop{{\sum}^*}_{d(c)} e\left(\frac{ad+b\bar{d}}{c}\right)$ is the classical Kloosterman sum.
\end{lemma}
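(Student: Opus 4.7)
The plan is to derive the $\GL(3)$ Voronoi summation formula by combining Mellin inversion with the functional equation of the additively twisted $L$-series attached to $\phi$. First, expand $\psi(n) = \frac{1}{2\pi i}\int_{(\sigma)} \tilde{\psi}(s)\, n^{-s}\, ds$ for $\sigma$ sufficiently large, and interchange sum and integral to write
\[
  \sum_{n\geq 1} A(r,n)\, e\!\left(\frac{n\bar d}{c}\right) \psi(n)
  = \frac{1}{2\pi i} \int_{(\sigma)} \tilde{\psi}(s)\, L(s; r, \bar d/c)\, ds,
\]
where $L(s; r, \bar d/c) := \sum_{n\geq 1} A(r,n)\, e(n\bar d/c)\, n^{-s}$. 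Cuspidality of $\phi$ together with $(c,d)=1$ ensures this additively twisted $L$-series has entire continuation of controlled growth in vertical strips.

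Next, invoke the functional equation for $L(s; r, \bar d/c)$ coming from the $(2,1)$-cell of the Bruhat decomposition on $\GL(3)$. This functional equation, worked out by Miller--Schmid~\cite{MillerSchmid2006automorphic} via automorphic distributions and reformulated arithmetically by Goldfeld--Li~\cite{goldfeld2006voronoi}, expresses $L(s; r, \bar d/c)$ as the gamma-factor ratio $\gamma^\pm(s)$ appearing in \eqref{eqn:gamma^pm}, times a dual Dirichlet series of the shape
\[
  \sum_{\pm}\, \sum_{n_1 \mid cr}\, \sum_{n_2 \geq 1}
  \frac{A(n_2, n_1)}{n_1 n_2}\, S\!\left(rd, \pm n_2;\, \frac{rc}{n_1}\right) (n_1^2 n_2)^{1-s}.
\]
The Kloosterman modulus $rc/n_1$, the divisor constraint $n_1 \mid cr$, and the arithmetic argument $rd$ arise from decomposing the additive character $e(n\bar d/c)$ along the double-coset structure of the long Weyl element; the two summands in $\gamma^\pm(s)$ correspond to the two Weyl elements contributing to the $(2,1)$-cell.

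Substituting this back into the contour integral and shifting to $\Re(s) = 1 - \sigma'$ for some $\sigma' > 5/14$ (so that the dual series converges absolutely and no residues are encountered), term-by-term one recognizes each resulting Mellin--Barnes integral as precisely the defining formula~\eqref{eqn:Psi} for $\Psi^\pm\!\left(n_1^2 n_2/(c^3 r)\right)$, while the collected archimedean normalizing constants produce the factor $c\pi^{3/2}/2$. The main obstacle is the arithmetic bookkeeping in the functional equation: correctly tracking the Kloosterman modulus, sign, argument, and divisor constraint that emerge from unfolding the additive character, and matching them to the correct archimedean gamma-factor kernel. Doing this from first principles requires either the automorphic distribution technology of Miller--Schmid or the Hecke-theoretic unfolding of Goldfeld--Li; once those are granted, rescaling to the present normalization is routine, as recorded in Blomer~\cite[Lemma 3]{blomer2012subconvexity}.
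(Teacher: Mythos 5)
Your proposal is correct and takes essentially the same approach as the paper: the paper itself does not prove this lemma but cites Miller--Schmid, Goldfeld--Li, and Blomer's Lemma 3 for the renormalized statement, exactly the references you defer to for the substantive arithmetic and archimedean work. Your added sketch of the Mellin-inversion-plus-functional-equation derivation is a fair conceptual outline of what those sources establish, though the displayed dual Dirichlet series has the powers of $n_1^2 n_2$ slightly off; since you explicitly rely on the cited sources for the exact bookkeeping, this does not affect the overall correctness.
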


%
%
%

\subsection{The delta method}
There are several oscillatory factors contributing to the convolution sums. Our method is based on separating these oscillations using the delta/circle method. In the present situation we will use a version of the delta method of Duke, Friedlander and Iwaniec. More specifically we will use the expansion (20.157) given in  \cite[\S20.5]{IwaniecKowalski2004analytic}. Let $\delta:\mathbb{Z}\rightarrow \{0,1\}$ be defined by
$$
\delta(n)=\begin{cases} 1&\text{if}\;\;n=0;\\
0&\text{otherwise}.\end{cases}
$$
We seek a Fourier expansion which matches with $\delta(n)$.
\begin{lemma}\label{lemma:delta}
  Let $Q$ be a large positive number. Then we have
  \begin{align}\label{eqn:delta-n}
    \delta(n)=\frac{1}{Q}\sum_{1\leq q\leq Q} \;\frac{1}{q}\; \sideset{}{^\star}\sum_{a\bmod{q}}e\left(\frac{na}{q}\right)
    \int_\mathbb{R} g(q,x) e\left(\frac{nx}{qQ}\right)\mathrm{d}x,
  \end{align}
  where 
  $g(q,x)$ is a weight function satisfies that
  \begin{equation}\label{eqn:g-h}
    g(q,x)=1+O\left(\frac{Q}{q}\left(\frac{q}{Q}+|x|\right)^A\right),
    \quad
     g(q,x)\ll |x|^{-A}, \quad \textrm{for any $A>1$},
  \end{equation}
  and
  \begin{equation}\label{eqn:g^(j)}
    \frac{\partial^j}{\partial x^j} g(q,x) \ll  |x|^{-j} \min(|x|^{-1},Q/q) \log Q, \quad j\geq1.
  \end{equation}
  Here the $\star$ on the sum indicates that the sum over $a$ is restricted by the condition $(a,q)=1$.
\end{lemma}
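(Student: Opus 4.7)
This is the Duke--Friedlander--Iwaniec delta method: the identity \eqref{eqn:delta-n} together with the bounds \eqref{eqn:g-h} and \eqref{eqn:g^(j)} is (up to cosmetic relabelling) the content of equations (20.157)--(20.159) of \cite[\S20.5]{IwaniecKowalski2004analytic}, and the plan is to simply invoke that reference. For completeness I sketch how one would prove it from scratch.

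The starting point is the trivial identity $\delta(n)=\int_0^1 e(n\alpha)\,\dd\alpha$, valid for every integer $n$. Following DFI, I would cover $[0,1]$ by smoothly overlapping arcs of length $\asymp 1/(qQ)$ centred at the Farey fractions $a/q$ with $1\le q\le Q$ and $(a,q)=1$. On such an arc one substitutes $\alpha = a/q + x/(qQ)$ with $|x|\asymp 1$. A smooth cutoff $\omega\in C_c^\infty(\mathbb{R})$, together with a companion bump correcting for the overlap, then produces an exact identity of the shape \eqref{eqn:delta-n}, with $g(q,x)$ an explicit combination of these bump functions evaluated at scales $q/Q$ and $x$.

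To verify the identity I would pair both sides against a Schwartz test function $f$ and show that the resulting expansion of $\sum_n f(n)$ collapses to $f(0)$. After reordering the triple sum, completeness of the additive characters mod $q$ converts the formula into a Poisson-type sum, and the smooth cutoffs are designed so that every nonzero frequency cancels. The estimates \eqref{eqn:g-h} and \eqref{eqn:g^(j)} are then read off from the explicit form of $g$: the main term $g(q,x)=1+O\bigl((Q/q)(q/Q+|x|)^A\bigr)$ reflects the normalisation of $\omega$; the decay $g(q,x)\ll |x|^{-A}$ comes from iterated integration by parts exploiting smoothness of $\omega$; and the derivative bounds follow by differentiating under the integral, each derivative costing either $|x|^{-1}$ (from the oscillatory factor) or $Q/q$ (from the localisation scale).

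The only real obstacle in a self-contained proof is bookkeeping: matching the smooth Farey dissection to the characteristic function of $[0,1]$ tightly enough that the error in \eqref{eqn:g-h} is uniform in $q,Q$ with the exponent $A$ free, and ensuring the derivative saturation $\min(|x|^{-1},Q/q)$ in \eqref{eqn:g^(j)} is sharp. These are all verifications, not genuine obstacles, which is why the standard reference \cite[\S20.5]{IwaniecKowalski2004analytic} can be cited directly.
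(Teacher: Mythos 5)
Your proposal is correct and takes essentially the same approach as the paper: the paper's proof is simply the citation ``See \cite[Lemma 15]{Huang}'', and that lemma of \cite{Huang} is in turn exactly the DFI delta expansion of \cite[\S20.5]{IwaniecKowalski2004analytic} that you invoke, with the higher-derivative bound \eqref{eqn:g^(j)} for $j\geq 1$ worked out. The only small caveat worth flagging is that (20.159) of Iwaniec--Kowalski states the derivative bound with the $\min(|x|^{-1},Q/q)$ factor only for $j=1$; the present lemma needs it for all $j\geq 1$, which is why the paper points to \cite[Lemma 15]{Huang} rather than to IK directly. You correctly identify this as a routine verification rather than a gap, so the proposal is sound.
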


\begin{proof}
  See \cite[Lemma 15]{Huang}.
\end{proof}

\subsection{Weight functions} \label{subsec:weight_function}

Let $\mathcal{F}$ be an index set and $X=X_T:\mathcal{F}\rightarrow \mathbb{R}_{\geq1}$ be a function of $T\in\mathcal{F}$. A family of $\{w_T\}_{T\in\mathcal{F}}$ of smooth functions supported on a product of dyadic intervals in $\mathbb{R}_{>0}^d$ is called \emph{$X$-inert} if for each $j=(j_1,\ldots,j_d) \in \mathbb{Z}_{\geq0}^d$ we have
\[
  \sup_{T\in\mathcal{F}} \sup_{(x_1,\ldots,x_d) \in \mathbb{R}_{>0}^d}
  X_T^{-j_1-\cdots -j_d} \left| x_1^{j_1} \cdots x_d^{j_d} w_T^{(j_1,\ldots,j_d)} (x_1,\ldots,x_d) \right|
   \ll_{j_1,\ldots,j_d} 1.
\]

For a $T^\varepsilon$-inert function $V$, we may separate variables in $V(x_1, \ldots , x_d)$ by first inserting a redundant function $V (x_1) \cdots V (x_d)$ that is 1 on the support of $V$ and then applying Mellin inversion
\begin{multline*}
  V (x_1, \ldots , x_d) = V (x_1, \ldots , x_d)V (x_1) \cdots V (x_d)
   \\
   =  \frac{1}{(2\pi i)^d} \int_{(0)}\cdots \int_{(0)} \tilde{V}(s_1,\ldots,s_d)
  (V (x_1) \cdots V (x_d) x_1^{-s_1} \cdots x_n^{-s_d} ) \dd s_1 \cdots \dd s_d,
\end{multline*}
where $\tilde{V}(s_1,\ldots,s_d)=\int_{0}^{\infty}\cdots\int_{0}^{\infty} V(x_1, \ldots , x_d) x_1^{s_1-1} \cdots x_d^{s_d-1} \dd x_1 \cdots \dd x_d$ is the Mellin transform of $V$.
Here we can truncate the vertical integrals at height $|\Im s_j| \ll T^{2\varepsilon}$ at the cost of a negligible error $O_A(T^{-A})$.
We will often separate variables in this way without explicit mention.

\subsection{Oscillatory integrals}

We will use the following integration by parts and stationary phase lemmas several times.

\begin{lemma}\label{lemma:repeated_integration_by_parts}
  Let $Y\geq1$. Let $X,\; V,\; R,\; Q>0$ and suppose that $w=w_T$ is a smooth function with  $\supp w \subseteq [\alpha,\beta]$ satisfying $w^{(j)}(\xi) \ll_j X V^{-j}$ for all $j\geq0$.
  Suppose that on the support of $w$, $h=h_T$ is smooth and satisfies that
  $h'(\xi)\gg R$ and $ h^{(j)}(\xi) \ll Y Q^{-j}$, for all $j\geq2.$
  Then for arbitrarily large $A$ we have
    \[
      I = \int_{\mathbb{R}} w(\xi) e(h(\xi))  \dd \xi  \ll_A (\beta-\alpha)  X \left[  \left(\frac{QR}{\sqrt{Y}}\right)^{-A} + (RV)^{-A}  \right].
    \]
\end{lemma}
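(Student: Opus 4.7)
The plan is to prove the bound by $A$ repeated integrations by parts. We may assume $RV \geq 1$ and $QR/\sqrt{Y} \geq 1$, since otherwise the claimed bound dominates $(\beta-\alpha) X$, which follows from the trivial estimate $|I| \leq \int |w|\,\dd\xi \ll (\beta-\alpha) X$. Write $a := (RV)^{-1}$ and $b := \sqrt{Y}/(QR)$, so that $a, b \in (0, 1]$. Define the differential operator
\[
  (Df)(\xi) := -\frac{1}{2\pi i}\frac{d}{d\xi}\left(\frac{f(\xi)}{h'(\xi)}\right),
\]
which is well defined on $\supp w$ since $h' \gg R > 0$ there. From $(e(h(\xi)))' = 2\pi i h'(\xi) e(h(\xi))$ and the vanishing of $w$ (and all its derivatives) at $\alpha, \beta$, integration by parts produces no boundary term and yields $\int f\cdot e(h)\,\dd\xi = \int (Df)\cdot e(h)\,\dd\xi$ for any smooth $f$ compactly supported in $[\alpha,\beta]$. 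Iterating $A$ times gives $I = \int_\alpha^\beta (D^A w)(\xi)\, e(h(\xi))\,\dd\xi$, so that $|I| \leq (\beta-\alpha)\sup_\xi |D^A w(\xi)|$.

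The heart of the argument is the pointwise estimate of $D^A w$. By induction on $A$, using the product rule at each step, $D^A w$ can be written as a finite linear combination (with $O_A(1)$ terms and constants depending only on $A$) of expressions of the form
\[
  \frac{w^{(k_0)}(\xi)}{h'(\xi)^{A+m}}\prod_{j\geq 1}\bigl(h^{(j+1)}(\xi)\bigr)^{k_j},\qquad m := \sum_{j\geq 1}k_j,\quad K := \sum_{j\geq 1} j\, k_j = A - k_0,
\]
with non-negative integer exponents $k_0, k_j$. The identity $k_0 + K = A$ reflects that each application of $D$ differentiates exactly once while increasing the power of $h'$ in the denominator by one (in the induction step, differentiating $w^{(k_0)}$ bumps $k_0$; differentiating an $h^{(j+1)}$-factor turns it into $h^{(j+2)}$; differentiating the $h'$-denominator produces a new $h''$). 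Using the hypotheses $w^{(k_0)} \ll X V^{-k_0}$, $h^{(j+1)} \ll Y Q^{-(j+1)}$ for $j \geq 1$, and $|h'|\gg R$, each such term is bounded pointwise by
\[
  X\cdot V^{-k_0} R^{-(A+m)} \prod_{j\geq 1}\bigl(Y Q^{-(j+1)}\bigr)^{k_j} = X\, a^{k_0}\, Y^m (QR)^{-(m+K)}.
\]

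To finish, I would use $K \geq m$ (which holds since each $j \geq 1$ in $K = \sum j k_j$) together with $Y \geq 1$ to observe that
\[
  Y^m(QR)^{-(m+K)} = b^{m+K}\, Y^{(m-K)/2} \leq b^{m+K} \leq b^{A-k_0},
\]
the final inequality using $b\leq 1$ and $m \geq 0$. Hence each term in $D^A w$ is bounded by $X\, a^{k_0} b^{A-k_0} \leq X \max(a,b)^A \leq X(a^A + b^A)$, and summing over the $O_A(1)$ term-types and multiplying by $\beta - \alpha$ produces the claimed estimate. The only genuine obstacle is the Leibniz-rule combinatorics that leads to the displayed parametrization of $D^A w$; once this parametrization is in hand, everything else is a direct manipulation of the size bounds together with the elementary inequalities above.
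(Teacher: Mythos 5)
Your proof is correct, and it is the standard argument for this lemma. The paper does not prove it but cites Blomer--Khan--Young \cite[Lemma 8.1]{BlomerKhanYoung}, whose proof is by the same repeated integration-by-parts operator $D$ and the same Leibniz-rule combinatorics for $D^A w$; your write-up is essentially a self-contained transcription of that argument, including the reduction to the case $RV, QR/\sqrt{Y} \geq 1$, the parametrization of the terms by $(k_0, k_1, k_2, \dots)$ with $k_0 + \sum j k_j = A$, and the use of $m \leq K$ together with $Y \geq 1$ to absorb the $Y$-powers.
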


\begin{proof}
  See \cite[Lemma 8.1]{BlomerKhanYoung}.
\end{proof}

\begin{lemma}\label{lemma:stationary_phase}
  Suppose $w_T$ is $X$-inert in $t_1,\ldots,t_d$, supported on  $t_i\asymp X_i$ for $i=1,2,\ldots,d$. Suppose that on the support of $w_T$, $h=h_T$ satisfies that
  \[
    \frac{\partial^{a_1+a_2+\cdots +a_d}}{\partial t_1^{a_1}\cdots \partial t_d^{a_d}} h(t_1,t_2,\ldots,t_d) \ll_{a_1,\ldots,a_d}  \frac{Y}{X_1^{a_1} X_2^{a_2}\cdots X_d^{a_d}},
  \]
  for all $a_1,\ldots,a_d\in \mathbb{Z}_{\geq0}$. Let
  \[
    I = \int_{\mathbb{R}} w_T(t_1,t_2,\ldots,t_d) e^{i h(t_1,t_2,\ldots,t_d)}  \dd t_1.
  \]
   Suppose $\frac{\partial^{2}}{\partial t_1^{2}} h(t_1,t_2,\ldots,t_d) \gg \frac{Y}{X_1^2}$
  for all $(t_1,t_2,\ldots,t_d)\in \supp w_T$, and there exists $t_0 \in\mathbb{R}$ such that $ \frac{\partial}{\partial t_1} h(t_0,t_2,\ldots,t_d)=0$.
  Suppose that $Y/X^2 \geq R \geq 1$. Then
  \[
    I
    = \frac{X_1}{\sqrt{Y}} e^{i h(t_0,t_2,\ldots,t_d)} W_T(t_2,\ldots,t_d) + O_A(X_1 R^{-A}),
  \]
  for some $X$-inert family of functions $W_T$ and any $A>0$.
\end{lemma}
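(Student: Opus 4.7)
The plan is to reduce the lemma to a one-dimensional stationary phase result (treating $t_2,\ldots,t_d$ as parameters) and to track the dependence on these parameters carefully enough to exhibit $W_T$ as an $X$-inert family. First I would use the implicit function theorem: since $\partial_{t_1} h(t_0,t_2,\ldots,t_d)=0$ and $\partial^2_{t_1}h\gg Y/X_1^2\neq 0$ on the support, the stationary point $t_0=t_0(t_2,\ldots,t_d)$ exists, is unique, and is smooth in the remaining variables with derivatives controlled (by Fa\`a di Bruno applied to the relation $\partial_{t_1}h(t_0,\cdot)\equiv 0$) by $\partial_{t_i}t_0\ll X_i/1$ and more generally $\partial^{a_2}_{t_2}\cdots\partial^{a_d}_{t_d}t_0\ll_{a_2,\ldots,a_d} X_1/(X_2^{a_2}\cdots X_d^{a_d})$.

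Next I would split the integral by a smooth partition of unity $1=\rho+(1-\rho)$, where $\rho(t_1)$ is supported in a $t_1$-neighborhood of $t_0$ of length $X_1/\sqrt{R}$ and equal to $1$ on a smaller neighborhood, with $\rho^{(j)}\ll (\sqrt{R}/X_1)^j$. On the complement, using $|\partial_{t_1}h(t_1,\cdot)|\gg (Y/X_1^2)|t_1-t_0|$ together with the higher derivative bounds $\partial^{j}_{t_1}h\ll Y/X_1^j$, I apply Lemma~\ref{lemma:repeated_integration_by_parts} with the parameters $V=X_1/\sqrt{R}$, $Q=X_1$, and $R_{\mathrm{Lem}}\asymp \sqrt{Y R}/X_1$; the condition $Y/X_1^2\geq R$ ensures $QR_{\mathrm{Lem}}/\sqrt{Y}\gg\sqrt{R}\geq 1$, yielding the acceptable error $O_A(X_1 R^{-A})$. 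For the main piece I change variables $t_1=t_0+uX_1/\sqrt{Y}$ and Taylor expand $h(t_1,\cdot)=h(t_0,\cdot)+\tfrac{1}{2}h''(t_0,\cdot)(t_1-t_0)^2+\cdots$ so that
\[
  e^{ih(t_1,\cdot)}=e^{ih(t_0,\cdot)}e^{i A(t_2,\ldots,t_d) u^2/2}\,e^{i\Phi(u,t_2,\ldots,t_d)},
\]
where $A=X_1^2\,h''(t_0,\cdot)/Y\asymp 1$ and $\Phi$ is a smooth cubic-or-higher remainder whose $u$- and parameter-derivatives are $O(R^{-1/2})$-small on the rescaled support.

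After this rescaling the rescaled amplitude times the cutoff $\rho$ times the cubic factor $e^{i\Phi}$ is an $X$-inert function of $(u,t_2,\ldots,t_d)$, so integrating in $u$ against the Gaussian $e^{iAu^2/2}$ gives the claimed leading factor $X_1/\sqrt{Y}$ and produces $W_T(t_2,\ldots,t_d)$ as the resulting smooth function, whose $X$-inertness in the remaining variables follows from the smoothness bounds on $t_0$, $A$, $\Phi$, and the original amplitude, together with the chain rule. The one place that requires genuine care is verifying that no $X$-inert bound is lost when we substitute $t_1=t_0(t_2,\ldots,t_d)+uX_1/\sqrt{Y}$ into $w_T$ and $h$: this is the main technical point, and the needed estimate is precisely the bound on higher derivatives of $t_0$ obtained from the implicit function theorem in the first step. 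The result is then a direct consequence, and the form stated in Lemma~\ref{lemma:stationary_phase} is exactly the standard output (cf.\ the argument of Blomer--Khan--Young).
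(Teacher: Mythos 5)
The paper does not give a proof of Lemma~\ref{lemma:stationary_phase}; it simply cites \cite[\S 8]{BlomerKhanYoung} and \cite[\S 3]{KPY}. Your outline is precisely the standard stationary-phase argument proved in those references (localize near the stationary point, integrate by parts away from it, rescale and Taylor-expand near it, control $t_0$ via the implicit function theorem), so the approach is the one the paper relies on. However, three details need fixing, one of which is substantive.

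First, the bound $\partial_{t_i}t_0\ll X_i$ is a slip; since $t_0\asymp X_1$ the correct first-order bound is $\partial_{t_i}t_0\ll X_1/X_i$, consistent with the general formula you state next. Second, the split radius and the integration-by-parts parameters are inconsistent: with $\rho$ supported in $|t_1-t_0|\lesssim X_1/\sqrt{R}$ one has $|\partial_{t_1}h|\gg Y/(X_1\sqrt R)$ on the complement, not $\sqrt{YR}/X_1$, and these differ unless $Y\asymp R^2$. The correct choice is a cutoff at scale $|t_1-t_0|\lesssim X_1\sqrt{R/Y}$, which is $\leq X_1/X$ precisely because $Y/X^2\geq R$; then $R_{\mathrm{Lem}}=\sqrt{YR}/X_1$, $V=X_1\sqrt{R/Y}$, $Q=X_1$ give $QR_{\mathrm{Lem}}/\sqrt Y=\sqrt R$ and $R_{\mathrm{Lem}}V=R$, producing the claimed $O_A(X_1R^{-A})$ error. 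Third, and most importantly, after the substitution $t_1=t_0+uX_1/\sqrt Y$ the main piece has $|u|\lesssim\sqrt R$, and the cubic remainder has size $\Phi\asymp Y^{-1/2}u^3\lesssim Y^{-1/2}R^{3/2}\leq R/X$, which is \emph{not} $O(R^{-1/2})$-small. So $e^{i\Phi}$ is still genuinely oscillatory on the rescaled support, and one cannot simply ``integrate $u$ against the Gaussian'' to produce an $X$-inert $W_T$. Showing that the resulting $W_T$ is nevertheless $X$-inert despite this non-small remainder is exactly the nontrivial content of the proofs in \cite{BlomerKhanYoung} and \cite{KPY} (handled there by an iterated stationary-phase expansion, resp.\ a Fourier-analytic argument). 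Your plan acknowledges the inert-tracking as ``the main technical point'' but locates the difficulty in the derivatives of $t_0$; the real difficulty is in taming $\Phi$ on $|u|\lesssim\sqrt R$, and this step is missing.
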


\begin{proof}
  See \cite[\S 8]{BlomerKhanYoung} and \cite[\S 3]{KPY}.
\end{proof}

In the applications of Lemma \ref{lemma:stationary_phase}, we will explicitly give estimates of the derivatives for the first variable. For other derivatives we will also check all those conditions, but may not write them down explicitly.

\subsection{Stirling's formula}

For fixed $\sigma\in\mathbb{R}$, real $|t|\geq10$ and any $J>0$, we have Stirling's formula
\begin{equation*}
  \Gamma(\sigma+it) = e^{-\frac{\pi}{2}|t|} |t|^{\sigma-\frac{1}{2}} \exp\left( it\log\frac{|t|}{e} \right) \left( g_{\sigma,J}(t) + O_{\sigma,J}(|t|^{-J}) \right),
\end{equation*}
where
\[
  t^j \frac{\partial^j}{\partial t^j} g_{\sigma,J}(t) \ll_{j,\sigma,J} 1
\]
for all fixed $j\in \mathbb{N}_0$.
Similarly, we have
\begin{equation*}
  \frac{1}{\Gamma(\sigma+it)} = e^{\frac{\pi}{2}|t|} |t|^{-\sigma+\frac{1}{2}} \exp\left( -it\log\frac{|t|}{e} \right) \left(  h_{\sigma,J}(t) + O_{\sigma,J}(|t|^{-J}) \right),
\end{equation*}
where
\[
  t^j \frac{\partial^j}{\partial t^j} h_{\sigma,J}(t) \ll_{j,\sigma,J} 1
\]
for all fixed $j\in \mathbb{N}_0$.
Hence
\begin{equation}\label{eqn:Stirling}
  \frac{\Gamma(\sigma+it)}{\Gamma(\sigma-it)}
  = \exp\left( 2it\log\frac{|t|}{e} \right)
  \left( w_{\sigma,J}(t) + O_{\sigma,J}(|t|^{-J}) \right),
\end{equation}
where
\[
  t^j \frac{\partial^j}{\partial t^j} w_{\sigma,J}(t) \ll_{j,\sigma,J} 1
\]
for all fixed $j\in \mathbb{N}_0$.

\subsection{Bessel functions}

We need the following asymptotic formula for Bessel functions when $x\gg T^\varepsilon |\tau|$.
For $\tau \in \mathbb{R}$, $|\tau|>1$ and $x>0$, we have \cite[Eq. 7.13.2 (17)]{EMOT}
\begin{equation}\label{eqn:Jsim}
  \frac{J_{2i\tau}(2x)}{\cosh(\pi \tau)} = \sum_{\pm} e^{\pm 2i \omega(x,\tau)} \frac{g_A^{\pm}(x,\tau)}{x^{1/2}+|\tau|^{1/2}} + O(x^{-A}),
\end{equation}
where $g_A^{\pm}(x,\tau)$ is an $1$-inert function and
\begin{equation}\label{eqn:omega}
  \omega(x,\tau) = |\tau| \cdot \arcsinh \frac{|\tau|}{x} - \sqrt{x^2+\tau^2}.
\end{equation}
For $x\geq T^\varepsilon |\tau|$, we have \cite[Eq. 7.13.2 (18)]{EMOT}
\begin{equation}\label{eqn:Ksim}
   K_{2i\tau}(2x) \cosh(\pi \tau)  \ll x^{-1/2} \exp(-2x+ \pi|\tau|) \ll x^{-6} \exp(-x),
\end{equation}
for $T$ large enough.

\section{Applying the delta method}\label{sec:applying_delta}

By the delta method (Lemma \ref{lemma:delta}) we have
\begin{align*}
  S_r(N) & = \sum_{n\geq1} A(r,n)   V\left(\frac{n}{N}\right)
   \sum_{\substack{m\geq1}} \lambda_f(m)  m^{-it} W\left(\frac{m}{N}\right)  \delta\left( m-n \right) \\
  & = \sum_{n\geq1} A(r,n)   V\left(\frac{n}{N}\right)
   \sum_{\substack{m\geq1}} \lambda_f(m)  m^{-it} W\left(\frac{m}{N}\right)  \\
  & \hskip 50pt \cdot
  \frac{1}{Q}\sum_{1\leq q\leq Q} \;\frac{1}{q}\; \sideset{}{^\star}\sum_{a\bmod{q}}e\left(\frac{(m-n)a}{q}\right)
  \int_\mathbb{R}g(q,x) e\left(\frac{(m-n)x}{qQ}\right)\mathrm{d}x,
\end{align*}
where $W$ is a fixed smooth function such that $\supp W \subset \mathbb{R}^+$ and $W(u)=1$ if $u\in \supp V$, and $W^{(j)}(u)\ll_j 1$ for any $j\geq0$.
Inserting a smooth partition of unity for the $x$-integral and a dyadic partition for the $q$-sum, we get
\begin{equation}\label{eqn:SrN<<SrNXP}
  S_r(N) \ll  N^\varepsilon \sup_{N^{-B} \ll X \ll N^\varepsilon}
  \sup_{1\ll P \ll Q} |S_r^\pm(N,X,P)|  + O_A(N^{-A})
\end{equation}
where $B=B(A)>0$ is a large constant depending on $A$ and
\begin{align*}
  S_{r}^\pm(N,X,P)  & =
     \frac{1}{Q}\sum_{\substack{q\sim P}} \int_\mathbb{R} V\left(\frac{\pm x}{X}\right) \;\frac{1}{q}\; \sideset{}{^\star}\sum_{\substack{a\bmod{q} }}
   \sum_{n\geq1} A(r,n) e\left(\frac{-an}{q}\right)
   e\left(\frac{-nx}{qQ}\right)  V\left(\frac{n}{N}\right) \\
  & \hskip 90pt \cdot
   \sum_{\substack{m\geq1}} \lambda_f(m) e\left(\frac{m a}{q}\right)
   e\left(\frac{m x}{qQ}\right) m^{-it} W\left(\frac{m}{N}\right)
  \mathrm{d}x.
\end{align*}

\section{Applying Voronoi}\label{sec:Voronoi}

We first apply the Voronoi summation formula to the sum over $n$ (see Lemma \ref{lemma:VSF3}), getting
\begin{align*}
  \sum_{n\geq1} A(r,n) e\left(\frac{-an}{q}\right) &
   e\left(\frac{-nx}{qQ}\right)  V\left(\frac{n}{N}\right) \\
  & = \frac{q\pi^{3/2}}{2} \sum_{\pm} \sum_{n_1|qr} \sum_{n_2=1}^{\infty}
              \frac{A(n_2,n_1)}{n_1n_2} S\left(-r\bar{a},\pm n_2;\frac{rq}{n_1}\right)
              \Psi_x^{\pm}\left(\frac{n_1^2n_2}{q^3r}\right),
\end{align*}
where $\psi_x(u)=e\left(-\frac{ux}{qQ}\right) V\left(\frac{u}{N}\right)$ and $\Psi_x^{\pm}$ defined as in \eqref{eqn:Psi} with $\psi$ replaced by $\psi_x$.

\begin{lemma}\label{lemma:Psi}
  We have
  \begin{itemize}
    \item [(i)] If $zN\gg T^{\varepsilon}$, then $\Psi^\pm_x(z) \ll z^{-6} T^{-A}$ is negligibly small unless $\sgn(x) = \pm$ and $\pm\frac{Nx}{qQ} \asymp (zN)^{1/3}$, in which case we have
        \begin{equation}\label{eqn:Psi=1}
            \Psi_x^\pm(z) = \left(\pm \frac{Nx}{qQ}\right)^{3/2} e\left(\pm 2 \frac{(zN)^{1/2}}{(\pm \frac{Nx}{qQ})^{1/2}}\right) w\left(\frac{ zN }{(\pm \frac{Nx}{qQ})^{3}}\right) + O(T^{-A}) \ll (zN)^{1/2},
        \end{equation}
        where $w$ is a certain compactly supported $1$-inert  function depending on $A$.
    \item [(ii)] If $zN\ll T^{\varepsilon}$ and $\frac{Nx}{qQ}\gg T^\varepsilon$, then $\Psi_x^\pm(z) \ll_A T^{-A}$ for any $A>0$.
    \item [(iii)] If $zN\ll T^{\varepsilon}$ and $\frac{Nx}{qQ}\ll T^\varepsilon$, then $\Psi_x^\pm(z) \ll T^{\varepsilon}$.
  \end{itemize}
\end{lemma}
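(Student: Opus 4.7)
The plan is to analyze the Mellin--Barnes integral in \eqref{eqn:Psi} by combining Stirling's formula \eqref{eqn:Stirling} for the Gamma ratios in $\gamma^\pm$ with a stationary phase / integration by parts treatment of $\tilde\psi_x(1-s)$. I shift the contour to $\Re s = 1/2$, write $s = 1/2 + i\tau$, and change variables $u = Nv$ inside $\tilde\psi_x$ to obtain $\tilde\psi_x(1/2 - i\tau) = N^{1/2-i\tau} \int_0^\infty V(v)\, v^{-1/2-i\tau} e(-\lambda v)\, \dd v$ with $\lambda := Nx/(qQ)$. On $\supp V$ the problem becomes a two-variable oscillatory integral in $(\tau,v)$ with total phase
\[
\phi(\tau,v) = 3\tau \log(|\tau|/(2e)) - \tau \log(\pi^3 zNv) - 2\pi \lambda v
\]
against an inert amplitude.

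A careful application of \eqref{eqn:Stirling} to each factor in \eqref{eqn:gamma^pm}, using $\sum_j \alpha_j = 0$, shows that the two products $P_1(s)$ and $P_2(s)$ constituting $\gamma^\pm(s) = P_1(s) \mp i P_2(s)$ have the same main phase $\exp(3i\tau \log(|\tau|/(2e)))$ on $\Re s = 1/2$ and differ only by a fixed factor $e^{\pm 3i\pi\sgn(\tau)/4}$. A brief arithmetic check then yields that $\gamma^+(1/2+i\tau)$ vanishes at leading order for $\tau > 0$ while $\gamma^-(1/2+i\tau)$ vanishes at leading order for $\tau < 0$. Hence $\Psi^\pm_x(z)$ is (up to a negligible error) supported on $\pm \tau < 0$, and on that range $\gamma^\pm(1/2+i\tau)$ is an inert function of $\tau$ times $\exp(3i\tau \log(|\tau|/(2e)))$.

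For parts (ii) and (iii), where $zN \ll T^\varepsilon$, integration by parts in $\tau$ via Lemma \ref{lemma:repeated_integration_by_parts} against $\partial_\tau \phi = 3\log(|\tau|/2) - \log(\pi^3 zNv)$ restricts the $\tau$-integral to $|\tau| \ll T^\varepsilon$. If in addition $|\lambda| \gg T^\varepsilon$, then $|\partial_v \phi| = |\tau/v + 2\pi\lambda| \gg |\lambda|$ on $\supp V$, and integration by parts in $v$ yields rapid decay, proving (ii). Otherwise $|\lambda| \ll T^\varepsilon$ and a trivial estimate on the truncated integral gives $\Psi^\pm_x(z) \ll T^\varepsilon$, proving (iii). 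For part (i), where $zN \gg T^\varepsilon$, I apply stationary phase in both variables via Lemma \ref{lemma:stationary_phase}. Setting $\partial_v \phi = 0$ gives $v_0 = -\tau/(2\pi\lambda)$, which forces $\sgn \tau = -\sgn \lambda$ and $|\tau| \asymp |\lambda|$; combined with the sign restriction on $\tau$ from $\gamma^\pm$, this pins down $\sgn(x) = \pm$ in the $\Psi^\pm$ case. Setting $\partial_\tau \phi = 0$ gives $\tau^3 = 8\pi^3 zNv$, so $|\tau| \asymp (zN)^{1/3}$, and the two constraints on $|\tau|$ force the consistency $|\lambda| \asymp (zN)^{1/3}$ (otherwise the integral is negligible). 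Evaluation at the joint stationary point, with $\tau_0 = \pm 2\pi (zN/|\lambda|)^{1/2}$ and $v_0 = \tau_0/(\mp 2\pi\lambda) \asymp 1$, gives $\phi(\tau_0, v_0) = -2\tau_0 = \mp 4\pi (zN)^{1/2}/|\lambda|^{1/2}$; the two stationary phase outputs combine with the prefactors $z \cdot z^{-1/2} \cdot N^{1/2}$ to yield precisely \eqref{eqn:Psi=1}.

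The main technical obstacle is the sign bookkeeping inside $\gamma^\pm$: the signs of $\pm$, $\tau$, $\lambda$, and $x$ must all lock together consistently, and this requires careful tracking of the Stirling phase factors $e^{i(\sigma-1/2)(\pi/2)\sgn(t)}$ for each of the six Gamma functions in \eqref{eqn:gamma^pm} and of the cancellation inside $P_1 \mp iP_2$. A secondary subtlety is that the $\tau$-phase $2\tau\log\tau + (\text{linear})$ is logarithmic rather than polynomial, so Lemma \ref{lemma:stationary_phase} must be applied after an implicit rescaling $\tau = \tau_0(1+\delta)$ exhibiting $\tau_0 \gg T^\varepsilon$ as the effective large parameter; this is standard for this type of Voronoi integral. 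Beyond these two points, the analysis reduces to direct derivative checks on a phase that is linear in $v$ and nearly linear in $\tau$.
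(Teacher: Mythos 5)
Your overall strategy — Mellin--Barnes on $\Re s=1/2$, Stirling for $\gamma^\pm$, and a two-variable stationary phase in $(\tau,v)$ with phase $\phi(\tau,v)=3\tau\log(|\tau|/(2e))-\tau\log(\pi^3zNv)-2\pi\lambda v$ — is the right route, and your identification of the stationary point (forcing $\sgn(\tau)=-\sgn(\lambda)$, $|\tau|\asymp|\lambda|\asymp(zN)^{1/3}$, and the critical value $\phi=-2\tau_0$) correctly reproduces \eqref{eqn:Psi=1}. However, there is a genuine gap in the way you rule out the ``wrong'' sign of $\tau$, and it affects the conclusion: the lemma asserts $\Psi^\pm_x(z)\ll T^{-A}$ when $\sgn(x)\neq\pm$, and your argument as written does not deliver that.

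You obtain the restriction $\pm\tau<0$ by checking that the \emph{leading} Stirling phases of $P_1$ and $P_2$ combine to give $e^{-3i\pi\sgn(\tau)/4}-ie^{3i\pi\sgn(\tau)/4}=0$ for $\tau>0$ (and the companion identity for $\gamma^-$). But leading-order cancellation alone only gives $\gamma^+(1/2+i\tau)\ll|\tau|^{-1}$ for $\tau>0$; the subleading Stirling coefficients of $P_1$ and $P_2$ involve the different exponents $\sigma_j=(1/2\pm\alpha_j)/2$ and $(3/2\pm\alpha_j)/2$, so there is no reason for them to cancel term-by-term. Feeding $\gamma^+\ll|\tau|^{-1}$ into the subsequent stationary-phase analysis — which still has a genuine critical point in $v$ for the wrong sign of $\lambda$ — produces a contribution of size $\asymp(zN)^{1/6}$, which is far from $O(T^{-A})$. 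What is actually true, and what the argument needs, is that $\gamma^+(1/2+i\tau)\ll e^{-\pi\tau}|\tau|^{O(1)}$ for $\tau>0$ (and symmetrically for $\gamma^-$). The clean way to see this is not through the Stirling expansion of each $\Gamma$-ratio separately, but by first rewriting $\gamma^\pm$ via reflection and Legendre duplication:
\[
\gamma^\pm(s)=\frac{2^{3-3s}}{\pi^{3/2}}\Bigl(\prod_{j}\Gamma(s+\alpha_j)\Bigr)\Bigl(\prod_{j}\cos\tfrac{\pi(s+\alpha_j)}{2}\mp i\prod_{j}\sin\tfrac{\pi(s+\alpha_j)}{2}\Bigr),
\]
so that $P_1$ and $P_2$ share the \emph{same} $\prod_j\Gamma(s+\alpha_j)$ factor and differ only in the trigonometric product. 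Writing each $\cos$, $\sin$ in exponentials, the $e^{3\pi\tau/2}$ term in the bracket cancels exactly for $\tau>0$ in the $\gamma^+$ combination, leaving $O(e^{\pi\tau/2})$, which against $|\prod_j\Gamma(s+\alpha_j)|\asymp e^{-3\pi\tau/2}$ yields the needed exponential decay. Without some such identity (or, equivalently, the Bessel-kernel representation, which is the route used in the paper for the analogous $G^{\pm_1}$ statement in Lemma~\ref{lemma:G>>}), the phrase ``vanishes at leading order, hence negligible'' is a non sequitur.

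A secondary, cosmetic point: you write $\tau_0=\pm 2\pi(zN/|\lambda|)^{1/2}$ and $\phi(\tau_0,v_0)=\mp 4\pi(zN)^{1/2}/|\lambda|^{1/2}$, but since $\tau=-2\pi\lambda v$ with $v>0$, and $\sgn(\lambda)=\pm$ in the $\Psi^\pm$ case, one in fact has $\tau_0=\mp 2\pi(zN/|\lambda|)^{1/2}$ and $\phi=-2\tau_0=\pm 4\pi(zN)^{1/2}/|\lambda|^{1/2}$; this is the sign that reproduces the $e\bigl(\pm 2(zN)^{1/2}/(\pm Nx/(qQ))^{1/2}\bigr)$ of \eqref{eqn:Psi=1}. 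Your parts (ii) and (iii) are fine as sketched, and your remark about rescaling to handle the logarithmic $\tau$-phase is the correct fix for applying Lemma~\ref{lemma:stationary_phase}.
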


\begin{proof}
  See \cite[\S 5.3]{Huang}
\end{proof}

\subsection{The oscillating cases}

If $\frac{NX}{PQ}\gg T^\varepsilon$, then we have
\begin{align*}
  S_{r}^\pm(N,X,P)  =
     \frac{1}{Q}&\sum_{\substack{q\sim P}} \int_\mathbb{R} V\left(\frac{\pm x}{X}\right) \;\frac{1}{q}\; \sideset{}{^\star}\sum_{\substack{a\bmod{q} }}
      \frac{q\pi^{3/2}}{2} \sum_{n_1|qr} \sum_{n_2=1}^{\infty}
              \frac{A(n_2,n_1)}{n_1n_2} S\left(-r\bar{a},\pm n_2;\frac{rq}{n_1}\right) \\
  &  \cdot  \left(\pm \frac{Nx}{qQ}\right)^{3/2}
   e\left(\pm 2 \frac{( n_1^2n_2  Q)^{1/2}}{r^{1/2} q(\pm x)^{1/2}}\right) w\left(\frac{n_1^2n_2 Q^3}{r N^2 (\pm  x )^{3}}\right) \\
  &  \cdot
   \sum_{\substack{m\geq1}} \lambda_f(m) e\left(\frac{m a}{q}\right)
   e\left(\frac{m x}{qQ}\right) m^{-it} W\left(\frac{m}{N}\right)
  \mathrm{d}x  + O(T^{-A}).
\end{align*}
We first deal with the $x$-integral. Making a change of variable $x = \pm X \xi$, we get
\begin{align*}
  S_{r}^\pm(N,X,P) =
      X &\frac{1}{Q}\sum_{\substack{q\sim P}} \;\frac{1}{q}\; \sideset{}{^\star}\sum_{\substack{a\bmod{q} }}
      \frac{q\pi^{3/2}}{2} \sum_{n_1|qr} \sum_{n_2=1}^{\infty}
              \frac{A(n_2,n_1)}{n_1n_2} S\left(-r\bar{a},\pm n_2;\frac{rq}{n_1}\right) \\
  &  \cdot  \left(\frac{NX}{qQ}\right)^{3/2}
   \sum_{\substack{m\geq1}} \lambda_f(m) e\left(\frac{ma}{q}\right)
    m^{-it} W\left(\frac{m}{N}\right)
  \\
  &  \cdot
   \int_\mathbb{R} w\left(\frac{n_1^2n_2 Q^3}{r N^2X^{3}\xi^3}\right) V\left(\xi\right)  \xi^{3/2} e\left(\frac{\pm m X\xi}{qQ}\right)
   e\left(\pm 2 \frac{( n_1^2n_2  Q)^{1/2}}{r^{1/2} qX^{1/2} \xi^{1/2}}\right)
  \mathrm{d}\xi  + O(T^{-A}).
\end{align*}
We can remove the weight function $w\left(\frac{n_1^2n_2 Q^3}{r N^2X^{3}\xi^3}\right)$ by the Mellin technique as in \S \ref{subsec:weight_function}.
Then we have
\begin{align*}
  S_{r}^\pm(N,X,P) \ll T^\varepsilon \bigg|
      &\frac{X}{Q}\sum_{\substack{q\sim P}} \; \sideset{}{^\star}\sum_{\substack{a\bmod{q} }}
        \sum_{n_1|qr} \sum_{n_2=1}^{\infty}
              \frac{A(n_2,n_1)}{n_1n_2} S\left(-r\bar{a},\pm n_2;\frac{rq}{n_1}\right)
      V_1\left(\frac{n_1^2n_2 Q^3}{r N^2X^{3}}\right) \\
  &  \cdot  \left(\frac{NX}{qQ}\right)^{3/2}
   \sum_{\substack{m\geq1}} \lambda_f(m) e\left(\frac{ma}{q}\right)
    m^{-it} W\left(\frac{m}{N}\right)
  \\
  &  \cdot
   \int_\mathbb{R}  V_2\left(\xi\right)  e\left(\frac{\pm m X\xi}{qQ} \pm 2 \frac{( n_1^2n_2  Q)^{1/2}}{r^{1/2} qX^{1/2} \xi^{1/2}}\right)
  \mathrm{d}\xi \bigg|  + O(T^{-A}),
\end{align*}
for some $T^\varepsilon$-inert functions $V_1$ and $V_2$ with support in $[1,2]$.
We now consider the $\xi$-integral above. Let (temporarily)
  \[
    h(\xi) = \frac{\pm m X\xi}{qQ}
    \pm 2 \frac{( n_1^2n_2  Q)^{1/2}}{r^{1/2} qX^{1/2} \xi^{1/2}}.
  \]
  Then
  \[
    h'(\xi) = \frac{\pm m X }{qQ}
    \mp \frac{( n_1^2n_2  Q)^{1/2}}{r^{1/2} qX^{1/2} \xi^{3/2}},
  \]
  and
  \[
    h''(\xi) = \pm \frac{3}{2} \frac{( n_1^2n_2  Q)^{1/2}}{r^{1/2} qX^{1/2} \xi^{5/2}} , \quad
    h^{(j)}(\xi) \asymp_j \frac{NX}{PQ}, \quad j\geq2.
  \]
  The solution of $h'(\xi)=0$ is $\xi_0 = \frac{( n_1^2n_2)^{1/3} Q}{r^{1/3}   m^{2/3}X}$.
  Note that
  \[
    h(\xi_0) = \pm 3\frac{m^{1/3} ( n_1^2n_2)^{1/3} }{r^{1/3} q}
  \quad \textrm{and} \quad
    h''(\xi_0) = \pm \frac{3}{2} \frac{mX}{q Q \xi_0 } .
  \]
  Now by Lemma \ref{lemma:stationary_phase} with
  \[
    \textrm{$X= T^\varepsilon$, $t_1=\xi$, $t_2=n_1^2n_2$,  $t_3=m$, $t_4=q$,
     $X_1=1$, $X_2=\frac{rN^2X^3}{Q^3}$, $X_3=N$, $X_4=P$, and  $Y=\frac{NX}{PQ}$,}
  \]
  we get
\begin{multline*}
   \int_\mathbb{R}  V_2\left(\xi\right)  e\left(\frac{\pm m X\xi}{qQ} \pm 2 \frac{( n_1^2n_2  Q)^{1/2}}{r^{1/2} qX^{1/2} \xi^{1/2}}\right)
  \mathrm{d}\xi
  \\
  = \left(\frac{NX}{qQ}\right)^{-1/2}  V_3\left(\frac{ n_1^2n_2  Q^3}{r  N^{2}X^3},\frac{m}{N},\frac{q}{P}\right)
  e\left(\pm 3\frac{m^{1/3} ( n_1^2n_2)^{1/3} }{r^{1/3} q}  \right) + O_A(T^{-A}),
\end{multline*}
where $V_3$ is a $T^\varepsilon$-inert function with compact support in $\mathbb{R}_{>0}^3$.
Hence we obtain
\begin{multline}\label{eqn:Sr=3}
  S_{r}^\pm(N,X,P)  \ll T^\varepsilon \bigg|
    \frac{X}{Q}\sum_{\substack{q\sim P}} \;\frac{1}{q}\; \sideset{}{^\star}\sum_{\substack{a\bmod{q} }}
      \frac{NX}{Q}  \sum_{n_1|qr}
      \sum_{\substack{n_2=1 \\ n_1^2n_2 \asymp \frac{r  N^{2}X^3} {Q^3} }}^{\infty}
              \frac{A(n_2,n_1)}{n_1n_2} S\left(-r\bar{a},\pm n_2;\frac{rq}{n_1}\right)
              V\left(\frac{ n_1^2n_2  Q^3}{r  N^{2}X^3}\right) \\
   \cdot
   \sum_{\substack{m\geq1}} \lambda_f(m) e\left(\frac{ma}{q}\right)
   e\left(\pm 3\frac{m^{1/3} ( n_1^2n_2)^{1/3} }{r^{1/3} q}  \right)
    m^{-it} W\left(\frac{m}{N}\right)
   \bigg|
   + O(T^{-A}).
\end{multline}
Here we have removed the weight function $V_3$ by the Mellin technique again to separate the variables $n_2$ and $m$, and  modified the weight functions $W$ and $V$ accordingly. Note that $W$ and $V$ are $T^\varepsilon$-inert functions with compact support in $\mathbb{R}_{>0}$.

We now apply the Voronoi summation formula (see Lemma \ref{lem:VSF2}) to the sum over $m$ getting
\begin{align}\label{eqn:m-sum==}
   \sum_{\substack{m\geq1}} \lambda_f(m) e\left(\frac{ma}{q}\right)
   e\left(\pm 3\frac{m^{1/3} ( n_1^2n_2)^{1/3} }{r^{1/3} q}  \right)
   &
    m^{-it} W\left(\frac{m}{N}\right)
  \nonumber
  \\
  & = q \sum_{\pm_1}  \sum_{m\geq1} \frac{ \lambda_{f}(m) }{m}
  e\left(\frac{\pm_1\bar{a} m}{q}\right) G^{\pm_1} \left(\frac{m}{q^2} \right),
\end{align}
where  $g(m)= e\left(\pm 3\frac{m^{1/3} ( n_1^2n_2)^{1/3} }{r^{1/3} q}  \right)
    m^{-it} W\left(\frac{m}{N}\right)$ and $G^{\pm_1}$ is defined as in \eqref{eqn:G}.

\begin{lemma}\label{lemma:G>>}
  Assume $x\asymp X$ and $q\sim P$. Then
  \begin{enumerate}[label=\roman*)]
    \item  If  $yN  \gg T^{2+\varepsilon} + (\frac{NX}{PQ})^{2+\varepsilon}$, then we have $G^{\pm_1}(y) \ll_A y^{-6} T^{-A}$.
    \item If $\frac{NX}{PQ}\gg |T'|^{1-\varepsilon}$, then  we have $ G^{\pm_1}(y)$ is equal to (up to an error term of size $O(T^{-A})$)
        \[
          y^{it} (yN)^{1/2} \left( \frac{PQ}{NX}\right)^{1/2}
             \int_{\mathbb{R}}  y^{-i\tau}
             \left(\frac{  n_1^2n_2}{ q^3} \right)^{i\tau}
             W_1\left(\frac{ n_1^2n_2  Q^3}{r  N^{2}X^3}, \frac{\pm \tau} { \frac{NX}{PQ} } ,\frac{q}{P} \right)   w^{\pm_1}(\tau)  \dd \tau ,
        \]
        for some function $w^{\pm_1}$ such that $w^{\pm_1}(\tau)\ll 1$ and
        some $T^\varepsilon$-inert function $W_1$  with compact support in $\mathbb{R}_{>0}^3$.
    \item If $T^\varepsilon \ll \frac{NX}{PQ}\ll |T'|^{1-\varepsilon}$,
        then $G^{\pm_1}(y) \ll y^{-6} T^{-A}$ is negligibly small unless $yN \asymp T|T'|$, in which case we have $ G^{\pm_1}(y)$ is equal to (up to an error term of size $O(T^{-A})$)
        \[
           (\pi^2 y)^{it} (Ny)^{1/2}
      e\left(  - \frac{T}{2\pi} \log \frac{T}{2e}
    - \frac{T'}{2\pi} \log \frac{|T'|}{2e}  \pm \frac{B}{2\pi} \sum_{0\leq \ell\leq L} Q_\ell^\pm \left(\frac{B}{T},\frac{B}{T'} \right) \xi_0^{\ell+1}\right)
    W_3^{\pm_1}\left( \frac{B} { \frac{NX}{PQ} },\frac{q}{P} \right),
        \]
        where $L=L(A)$ is a large enough integer, $Q_\ell^\pm$ is a certain homogeneous polynomial of degree $\ell$ with $Q_0^\pm\left(\frac{B}{T},\frac{B}{T'} \right) =3$ and   $Q_1^\pm\left(\frac{B}{T},\frac{B}{T'} \right) = \mp \frac{1}{2} \left(\frac{B}{T}+\frac{B}{T'}\right) $,
        $B=\frac{ N^{1/3} ( n_1^2n_2)^{1/3} }{ r^{1/3} q}\asymp \frac{NX}{PQ} $,
        $\xi_0 = \left(\frac{2\pi T|T'|}{yN}\right)^{1/3} \asymp 1$, and
        $W_3^{\pm_1}$ is a $T^\varepsilon$-inert function with compact support in $\mathbb{R}_{>0}^2$.
  \end{enumerate}
\end{lemma}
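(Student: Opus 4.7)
The plan is to start from the Mellin--Barnes representation \eqref{eqn:G} for $G^{\pm_1}(y)$, shift the contour to $\Re(s) = -1/2$, substitute $s = -1/2 + i\tau - it$, and analyze the resulting $\tau$-integral. First I would open $\tilde g(1/2-i\tau+it)$ using the definition of $g$; after rescaling $u=Nv$ this becomes $N^{1/2-i\tau}\int_0^\infty e(BN^{1/3}v^{1/3})W(v)v^{-1/2-i\tau}\,\dd v$ with $B\asymp NX/(PQ)$. The phase $2\pi BN^{1/3}v^{1/3}-\tau\log v$ has a unique stationary point $v_0=(3\tau/(2\pi BN^{1/3}))^3$, so Lemma \ref{lemma:repeated_integration_by_parts} makes the $v$-integral negligibly small unless $\tau\asymp BN^{1/3}\asymp NX/(PQ)$; inside this range Lemma \ref{lemma:stationary_phase} produces $N^{1/2}(NX/(PQ))^{-1/2}e^{i\psi(\tau)}$ times a $T^\varepsilon$-inert weight, where $\psi(\tau)=3\tau-3\tau\log(3\tau/(2\pi BN^{1/3}))$. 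In parallel, Stirling's formula \eqref{eqn:Stirling} converts each factor in $\gamma_2^{\pm_1}(-1/2+i\tau-it)$ of the form $\Gamma(\sigma+i(\tau-T)/2)/\Gamma(\sigma-i(\tau-T)/2)$ into $\exp(i(\tau-T)\log(|\tau-T|/(2e)))$ times a bounded $1$-inert function, and analogously for $\tau-T'$.

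For part (i), the $\tau$-range is restricted to $\tau\asymp NX/(PQ)$ by the first step, and the derivative of the total phase $\Phi(\tau)$ contains the dominant contribution $-\log(\pi^2 yN)+\log|(\tau-T)(\tau-T')|+O(\log(NX/(PQ)))$. Under the hypothesis $yN\gg T^{2+\varepsilon}+(NX/(PQ))^{2+\varepsilon}$ this is $\gg T^\varepsilon$ while higher derivatives of $\Phi$ are uniformly bounded, so Lemma \ref{lemma:repeated_integration_by_parts} yields the claimed negligible bound on $G^{\pm_1}(y)$.

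For part (ii), where $NX/(PQ)\gg|T'|^{1-\varepsilon}$, I would not attempt stationary phase in $\tau$, since $\Phi''(\tau)$ can degenerate as $\tau$ approaches $T$ or $T'$; instead I would simply substitute the stationary-phase expression for $\tilde g$ into the $\tau$-integral to obtain the stated representation, with the gamma ratio absorbed into $w^{\pm_1}(\tau)$ and the smooth localization absorbed into $W_1$. For part (iii), $\tau\asymp NX/(PQ)\ll\min(T,|T'|)^{1-\varepsilon}$, so I Taylor expand each gamma phase as $(\tau-T)\log((T-\tau)/(2e))=-T\log(T/(2e))+\tau\log(T/(2e))-\sum_{k\geq 2}\tau^k/(k(k-1)T^{k-1})$ up to order $L=L(A)$, and similarly for $T'$. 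The linear-in-$\tau$ terms combine with $-\tau\log(\pi^2 yN)$ and $\psi'(\tau)=3\log(2\pi BN^{1/3}/(3\tau))$ to pin down the stationary point $\tau_0\asymp B\xi_0$ with $\xi_0=(2\pi T|T'|/(yN))^{1/3}$; the quadratic coefficient $\frac{1}{2}(1/T+1/T')$ gives a nondegenerate saddle, and Lemma \ref{lemma:stationary_phase} produces the asymptotic formula, with $\sum_{\ell\leq L}Q_\ell^\pm(B/T,B/T')\xi_0^{\ell+1}$ arising from evaluating the Taylor expansion of $\Phi$ at $\tau_0$.

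The main obstacle I anticipate is the careful bookkeeping in part (iii): identifying the explicit polynomials $Q_\ell^\pm$ from the Taylor data, verifying nondegeneracy and the correct sign of $\Phi''(\tau_0)$ uniformly across the parameter ranges (especially when $|T'|$ shrinks toward $T^{3/5}$, where the conductor drops), and matching signs between the $\pm$ in $B$, the $\pm_1$ in $G^{\pm_1}$, and the exponent in $\xi_0^{\ell+1}$. The two summands of $\gamma_2^{\pm_1}$ must also be recombined coherently, though their Stirling asymptotics have identical leading phases and differ only in the inert weight.
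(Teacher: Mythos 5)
Your plan for parts (ii) and (iii) matches the paper's: open $\tilde g$ via Mellin, localize $\tau\asymp NX/(PQ)$ by stationary phase, then (ii) leave the gamma ratio as $w^{\pm_1}$, or (iii) apply Stirling plus a second stationary phase in $\tau$ (the paper does this after rescaling $\tau=\pm B\xi$) and Taylor-expand at the stationary point to produce the $Q_\ell^\pm$. Your description of $\Phi''$ in (iii) as $\tfrac12(1/T+1/T')$ understates the dominant term $-3/\tau_0\asymp PQ/(NX)$ coming from the $\tilde g$-phase, but the mechanism is right.

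The gap is in part (i). You propose to also run the Mellin--Barnes/Stirling argument there, but in part (i) there is no hypothesis controlling $NX/(PQ)$ relative to $T$ or $|T'|$. After localizing to $\tau\asymp NX/(PQ)$, the support of the $\tau$-integral can contain a neighborhood of $\tau=T'$ (or $\tau=T$). In that region Stirling is not valid, and even where it is, the Stirling phase $(\tau-T')\log\frac{|T'-\tau|}{2e}$ has $j$-th derivatives of size $|\tau-T'|^{1-j}$, which blow up as $\tau\to T'$. Plugging $Q=|\tau-T'|$, $Y\asymp 1$ into Lemma \ref{lemma:repeated_integration_by_parts} gives $QR/\sqrt Y\asymp |\tau-T'|\log T$, which is \emph{not} a positive power of $T$ near the singularity; and in the good region your $R$ is only $\gg\log T$, not $T^{\eta}$, so the bound from Lemma \ref{lemma:repeated_integration_by_parts} does not supply $T^{-A}$ decay. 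The contribution from the $O(1)$-measure set around $\tau\approx T'$ alone is of size roughly $(yN)^{1/2}(PQ/(NX))^{1/2}$, which is nowhere near $y^{-6}T^{-A}$. The paper sidesteps this entirely by using the \emph{second} expression in \eqref{eqn:G} (the Bessel representation) for part (i): for $yN\gg T^{2+\varepsilon}+(NX/(PQ))^{2+\varepsilon}$, the $K$-Bessel term is exponentially small by \eqref{eqn:Ksim}, and the $J$-Bessel phase $\omega(2\pi\sqrt{yN\xi},t_f)$ has derivative of size $(yN)^{1/2}$ which, under the hypothesis, dominates the other phase derivatives (which are $\ll t+NX/(PQ)$); one then gets a uniform polynomial lower bound on the phase derivative and Lemma \ref{lemma:repeated_integration_by_parts} applies cleanly with $R=(yN)^{1/2}$. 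You should replace your argument for (i) with this Bessel-representation argument; as written your proof of (i) does not close.
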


\begin{proof}
  (i) First we use the second expression in \eqref{eqn:G} getting
  \begin{equation*}
    G^{\pm_1}(y)  = \epsilon_f^{(1\mp_1 1)/2} y
      \int_{0}^{\infty}   e\left(\pm 3\frac{u^{1/3} ( n_1^2n_2)^{1/3} }{r^{1/3} q}  \right)
    u^{-it} W\left(\frac{u}{N}\right)
    V\left(\frac{ n_1^2n_2  Q^3}{r  u^{2}X^3}\right) J^{\pm_1}_{f}\left(  4\pi\sqrt{yu} \right) \dd u.
  \end{equation*}
  Making a change of variable $u=N\xi$, we have $G^{\pm_1}(y)$ is equal to
  \begin{equation*}
    \epsilon_f^{(1\mp_1 1)/2} y N^{1-it}
      \int_{0}^{\infty}   e\left(\pm 3\frac{N^{1/3} ( n_1^2n_2)^{1/3} }{r^{1/3} q} \xi^{1/3} \right)
    \xi^{-it} W\left(\xi\right)
  V\left(\frac{ n_1^2n_2  Q^3}{r N^2 \xi^{2}X^3}\right) J^{\pm_1}_{f}\left(  4\pi\sqrt{yN\xi} \right) \dd \xi.
  \end{equation*}
  If $yN\gg t_f^{2}T^{\varepsilon}$, then by \eqref{eqn:Ksim} we have $G^{-}(y) \ll_A y^{-6} T^{-A}$ for any $A>0$.
  If $yN\gg  T^{2+\varepsilon} + (\frac{NX}{PQ})^{2+\varepsilon}$, then by \eqref{eqn:Jsim} we have
  \begin{multline*}
    G^+(y) =  y N^{1-it}
      \int_{0}^{\infty}   e\left(\pm 3\frac{N^{1/3} ( n_1^2n_2)^{1/3} }{r^{1/3} q} \xi^{1/3} \right)
    \xi^{-it} W\left(\xi\right)
    V\left(\frac{ n_1^2n_2  Q^3}{r N^2 \xi^{2}X^3}\right) \\
    \cdot \sum_{\pm} e^{\pm 2i \omega(2\pi\sqrt{yN\xi},t_f)} \frac{g_A^{\pm}(2\pi\sqrt{yN\xi},t_f)}{(2\pi\sqrt{yN\xi})^{1/2}+t_f^{1/2}}   \dd \xi
    + O_A(y^{-6} T^{-A}).
  \end{multline*}
  Let (temporarily)
  \[
    h(\xi)  = \pm 6\pi\frac{N^{1/3} ( n_1^2n_2)^{1/3} }{r^{1/3} q} \xi^{1/3} -t\log \xi
    \pm 2 \omega(2\pi\sqrt{yN\xi},t_f).
  \]
  Then we have
  \[
    h'(\xi) = \pm 2\pi\frac{N^{1/3} ( n_1^2n_2)^{1/3} }{r^{1/3} q} \xi^{-2/3} -\frac{t}{\xi}
    \mp \frac{\sqrt{(2\pi\sqrt{yN\xi})^2+t_f^2}}{\xi}
    \gg (yN)^{1/2},
  \]
  \[
    h^{(j)}(\xi) \ll (yN)^{1/2}, \quad j\geq2 .
  \]
  By Lemma \ref{lemma:repeated_integration_by_parts} with
  \[
    \textrm{$X=V=1$,  $Y= (yN)^{1/2}$, $Q=1$, and $R= (yN)^{1/2}$,}
  \]
  we get $G^+(y) \ll_A y^{-6} T^{-A}$.
  Hence we have $G^{\pm_1}(y) \ll_A y^{-6} T^{-A}$ if
   $yN\gg T^{2+\varepsilon} + (\frac{NX}{PQ})^{2+\varepsilon}$.

  (ii) For $yN  \ll T^{2+\varepsilon} + (\frac{NX}{PQ})^{2+\varepsilon}$  we use the first expression in \eqref{eqn:G}.
    Writing $s=\sigma+i\tau$ with $\sigma=-1/2$ and making a change of variable $\tau\rightsquigarrow \tau-t$, we get
  \begin{equation}\label{eqn:G(y)}
    G^{\pm_1}(y) = \frac{\epsilon_f^{(1\mp_1 1)/2}}{4\pi^2}  \int_{\mathbb{R}} (\pi^2 y)^{1/2-i\tau+it}
    \gamma_2^{\pm_1}(-1/2+i\tau-it) \tilde{g}(1/2-i\tau+it) \dd \tau,
  \end{equation}
  where
  \[
    \gamma_2^{\pm_1}(-1/2+i\tau-it) = \frac{\Gamma(\frac{1/2+i\tau-iT}{2})\Gamma(\frac{1/2+i\tau-iT'}{2})} {\Gamma(\frac{1/2-i\tau+iT}{2})\Gamma(\frac{1/2-i\tau+iT'}{2})}
    \pm_1  \frac{\Gamma(\frac{3/2+i\tau-iT}{2})\Gamma(\frac{3/2+i\tau-iT'}{2})} {\Gamma(\frac{3/2-i\tau+iT}{2})\Gamma(\frac{3/2-i\tau+iT'}{2})}.
  \]

  If $\frac{NX}{PQ}\gg T^{\varepsilon}$, then
  \begin{equation*}
    \tilde{g}(1/2-i\tau+it) = \int_{\mathbb{R}}
    e\left(\pm 3\frac{u^{1/3} ( n_1^2n_2)^{1/3} }{r^{1/3} q}  \right)
      W\left(\frac{u}{N}\right) u^{-1/2-i\tau} \dd u.
  \end{equation*}
  Making a change of variable $u=N\xi^3$, we have
  \begin{equation*}
    \tilde{g}(1/2-i\tau+it) = N^{1/2-i\tau} \int_{\mathbb{R}}
    e\left(\pm 3\frac{N^{1/3} ( n_1^2n_2)^{1/3} }{r^{1/3} q} \xi  - \frac{3 \tau}{2\pi} \log \xi \right) W\left(\xi^3\right) 3 \xi^{1/2}  \dd \xi.
  \end{equation*}
  Let (temporarily)
  \[
    h(\xi) = \pm 3\frac{N^{1/3} ( n_1^2n_2)^{1/3} }{r^{1/3} q} \xi  - \frac{3\tau}{2\pi} \log \xi.
  \]
  Then
  \[
    h'(\xi) = \pm 3 \frac{N^{1/3} ( n_1^2n_2)^{1/3} }{r^{1/3} q}
    - \frac{3 \tau}{2\pi \xi} ,
  \]
  and
  \[
    h''(\xi) = \frac{3\tau}{2\pi \xi^2} , \quad
    h^{(j)}(\xi) \asymp_j |\tau|, \quad j\geq2.
  \]
  By Lemma \ref{lemma:repeated_integration_by_parts} 
   with
  \[
    \textrm{$X=1$, $V=T^{-\varepsilon}$,  $Y=|\tau|$, $Q=1$, and $R= |\tau|+\frac{NX}{PQ}$,}
  \]
  we have $\tilde{g}(1/2-i\tau+it)$ is negligibly small unless $\sgn(\tau)=\pm$ and $\tau\asymp \frac{N^{1/3} ( n_1^2n_2)^{1/3} }{r^{1/3} q}  \asymp \frac{NX}{PQ}$, in which case the solution of $h'(\xi)=0$ is
  \[
    \xi_0 = \frac{\pm \tau}{2\pi } \frac{r^{1/3} q} {N^{1/3} ( n_1^2n_2)^{1/3} }.
  \]
  Note that
  \[
    h(\xi_0) = -\frac{3\tau}{2\pi} \log  \frac{\pm \tau r^{1/3} q} {2\pi e N^{1/3} ( n_1^2n_2)^{1/3} }.
  \]
  Now by Lemma \ref{lemma:stationary_phase} with
  \[
    \textrm{$X= T^\varepsilon$, $t_1=\xi$, $t_2=n_1^2n_2$,  $t_3=\tau$, $t_4=q$,
    $X_1=1$, $X_2=\frac{rN^2X^3}{Q^3}$, $X_3=\frac{NX}{PQ}$, $X_4=P$,
    and $Y=\frac{NX}{PQ}$}
  \]
  we get
  \begin{equation}\label{eqn:tilde(g)}
    \tilde{g}(1/2-i\tau+it) = N^{1/2-i\tau} \left(\frac{NX}{PQ}\right)^{-1/2}
    e\left(-\frac{3\tau}{2\pi} \log  \frac{\pm \tau } {2\pi e B }\right)
    W_1\left(\frac{ n_1^2n_2  Q^3}{r  N^{2}X^3}, \frac{\pm \tau} { \frac{NX}{PQ} } ,\frac{q}{P} \right)  + O_A(T^{-A}),
  \end{equation}
  where $B=\frac{ N^{1/3} ( n_1^2n_2)^{1/3} }{ r^{1/3} q} $ and
  $W_1$ is a $T^\varepsilon$-inert function with compact support in $\mathbb{R}_{>0}^3$.
  Note that we have $B \asymp \frac{NX}{PQ}$.

   Now we consider the case $ \frac{NX}{PQ}\gg |T'|^{1-\varepsilon}$.
  By \eqref{eqn:G(y)} and \eqref{eqn:tilde(g)} we have
  \begin{multline}
    G^{\pm_1}(y) = \frac{\epsilon_f^{(1\mp_1 1)/2}}{4\pi}   (\pi^2 y)^{it}
    (yN)^{1/2}  \left(\frac{NX}{PQ}\right)^{-1/2}
    \int_{\mathbb{R}} (\pi^2 yN )^{-i\tau}
    \left(\frac{ N n_1^2n_2}{ r q^3} \right)^{i\tau}
    W_1\left(\frac{ n_1^2n_2  Q^3}{r  N^{2}X^3}, \frac{\pm \tau} { \frac{NX}{PQ} } ,\frac{q}{P} \right)
    \\
    \cdot \gamma_2^{\pm_1}(-1/2+i\tau-it)
    e\left(-\frac{3\tau}{2\pi} \log  \frac{\pm \tau } {2\pi e}\right)
     \dd \tau   + O(T^{-A}).
  \end{multline}
  Taking $w^{\pm_1}(\tau) = \frac{\epsilon_f^{(1\mp_1 1)/2}}{4\pi} \pi^{2it} (\pi^2 r)^{-i\tau} \gamma_2^{\pm_1}(-1/2+i\tau-it)
    e\left(-\frac{3\tau}{2\pi} \log  \frac{\pm \tau } {2\pi e}\right)$ and noting that $w^{\pm_1}(\tau)\ll 1$,
  we complete the proof of Lemma \ref{lemma:G>>} (ii).

  (iii)
  We now consider the case $T^\varepsilon \ll \frac{NX}{PQ}\ll |T'|^{1-\varepsilon}$.
  By Stirling's formula, for $\tau \ll |T'|^{1-\varepsilon}$, we have
  \begin{multline*}
    \gamma_2^{\pm_1}(-1/2+i\tau-it)
    = \exp\left( i(\tau-T)\log\frac{T-\tau}{2e} + i(\tau-T')\log\frac{|T'-\tau|}{2e} \right)
    \\ \cdot \left( w_{\epsilon_f,J}^{\pm_1}(\tau-T) w_{\epsilon_f,J}^{\pm_1}(\tau-T') + O_{J}(T^{-J}) \right),
\end{multline*}
where
\[
  t^j \frac{\partial^j}{\partial t^j} w_{\epsilon_f,J}^{\pm_1}(t) \ll_{j,J} 1
\]
for all fixed $j\in \mathbb{N}_0$.
Hence together with \eqref{eqn:G(y)} and \eqref{eqn:tilde(g)}, we have
 \begin{multline*}
    G^{\pm_1}(y) = \frac{\epsilon_f^{(1\mp_1 1)/2}}{4\pi }
    (\pi^2 y)^{it} (Ny)^{1/2} \left(\frac{NX}{PQ}\right)^{-1/2}
    \int_{\mathbb{R}}  w_{\epsilon_f,J}^{\pm_1}\left(\tau - T\right)
    w_{\epsilon_f,J}^{\pm_1}\left(\tau-T'\right)
    W_1\left(\frac{B^3 q^3  Q^3}{N^{3}X^3}, \frac{\pm \tau} { \frac{NX}{PQ} } , \frac{q}{P}\right)
    \\ \cdot e\left( - \frac{\tau}{2\pi}  \log(\pi^2 yN)
      + \frac{(\tau - T)}{2\pi}\log\frac{T-\tau}{2e}
      + \frac{(\tau - T')}{2\pi}\log\frac{|T'-\tau|}{2e}
      -\frac{3\tau}{2\pi} \log  \frac{\pm \tau } {2\pi e B }
      \right)   \dd \tau + O_A(T^{-A}).
  \end{multline*}
  Making a change of variable $\tau = \pm  B \xi$, we get
  \begin{multline*}
    G^{\pm_1}(y) =  (\pi^2 y)^{it} (Ny)^{1/2} \left(\frac{NX}{PQ}\right)^{1/2} \int_{\mathbb{R}}
      W_2^{\pm_1}\left(\xi, \frac{B} { \frac{NX}{PQ} },\frac{q}{P} \right)
     e\bigg( \mp \frac{B\xi}{2\pi}  \log (\pi^2 yN)
    \mp \frac{3B\xi}{2\pi} \log \frac{\xi}{2\pi e}
    \\
     + \frac{(\pm B \xi  - T )}{2\pi}\log\frac{T\mp B \xi}{2e}
     + \frac{(\pm B \xi - T')}{2\pi}\log\frac{|T'\mp B \xi|}{2e} \bigg) \dd \xi
     + O_A(T^{-A}),
  \end{multline*}
  where $W_2^{\pm_1}\left(\xi, \frac{B} { \frac{NX}{PQ} },\frac{q}{P} \right) =
  \frac{\epsilon_f^{(1\mp_1 1)/2}}{4\pi }  w_{\epsilon_f,J}^{\pm_1}\left(\pm  B \xi - T\right)
    w_{\epsilon_f,J}^{\pm_1}\left(\pm B \xi-T'\right)
    W_1\left(\frac{B^3 q^3  Q^3}{N^{3}X^3}, \frac{B \xi} { \frac{NX}{PQ} } , \frac{q}{P}\right) \frac{B}{\frac{NX}{PQ} }  $ is a $T^\varepsilon$-inert function with compact support in $\mathbb{R}_{>0}^3$.
  Let (temporarily)
  \begin{equation*}
    h(\xi) =\mp \frac{B\xi}{2\pi}  \log (\pi^2 yN)
    \mp \frac{3B\xi}{2\pi} \log \frac{\xi}{2\pi e}
     + \frac{(\pm B \xi  - T )}{2\pi}\log\frac{T\mp B \xi}{2e}
     + \frac{(\pm B \xi - T')}{2\pi}\log\frac{|T'\mp B \xi|}{2e} .
  \end{equation*}
  Then
  \begin{align*}
    h'(\xi)
    & =\mp \frac{B}{2\pi}  \log (\pi^2 yN)
    \mp \frac{3B}{2\pi} \log \frac{\xi}{2\pi}
     + \frac{\pm B}{2\pi}\log\frac{T\mp B \xi}{2}
     + \frac{\pm B}{2\pi}\log\frac{|T'\mp B \xi|}{2} \\
    & =
    \mp \frac{B}{2\pi} \log \frac{yN \xi^3}{2\pi (T\mp B \xi)|T'\mp B \xi|} ,
  \end{align*}
  and
  \[
    h''(\xi) =  \mp \frac{3B}{2\pi \xi} \pm \frac{B}{2\pi}  \frac{ \mp B }{(T\mp B\xi)}
    \pm \frac{B}{2\pi}  \frac{ \mp B }{(T'\mp B\xi)} , \quad
    h^{(j)}(\xi) \asymp_j \frac{NX}{PQ}, \quad j\geq2.
  \]
  By Lemma \ref{lemma:repeated_integration_by_parts} with
  \[
    \textrm{$X=1$, $V=T^{-\varepsilon}$,  $Y=\frac{NX}{PQ}$, $Q=1$, and $R= \frac{NX}{PQ}$,}
  \]
  we have that $G^{\pm_1}(y) \ll_A T^{-A}$ is negligibly small unless $yN \asymp T |T'|$.
  Assume $yN \asymp T |T'|$. Denote the solution of $h'(\xi)=0$ by $\xi_*$ with $\xi_*\asymp 1$. Then
  by Lemma \ref{lemma:stationary_phase} with
  \[
    \textrm{$X= T^\varepsilon$, $t_1=\xi$, $t_2=B$,  $t_3=q$, $t_4=yN$,
    $X_1=1$, $X_2=\frac{NX}{PQ}$, $X_3=P$, $X_4=T|T'|$, and $Y=\frac{NX}{PQ}$,}
  \]
   we get
  \begin{align}\label{eqn:Gpm1}
    G^{\pm_1}(y)
    & =  (\pi^2 y)^{it} (Ny)^{1/2}
     e(h(\xi_*))  W_3^{\pm_1}\left( \frac{B} { \frac{NX}{PQ} },\frac{q}{P},\frac{yN}{T|T'|} \right) + O_A(T^{-A}),
  \end{align}
  where  $W_3^{\pm_1}$  is a $T^\varepsilon$-inert function with compact support  in $\mathbb{R}_{>0}^3$.
  Note that the assumptions in Lemma \ref{lemma:stationary_phase} hold in this case.

  To simplify the expression of $G^{\pm_1}(y)$. Note that
  the solution of $h'(\xi)=0$, i.e., $yN\xi^3=2\pi T|T'|(1\mp \frac{B}{T}\xi)(1\mp \frac{B}{T'}\xi)$, can be written as
  \begin{equation}\label{eqn:xi*}
    \xi_* = \xi_0  + \xi_1 +  \xi_2 + \xi_3+ \cdots + \xi_L + \xi_{L+1},
  \end{equation}
   where $L \geq 3$ is a large integer and $\xi_{\ell+1}=o(\xi_\ell)$ ($0\leq \ell\leq L$) with
  \[
    \xi_0 = \left(\frac{2\pi T|T'|}{yN}\right)^{1/3}, \quad
    \xi_1 = \frac{1}{3} \left(\mp \frac{B}{T}\mp \frac{B}{T'}\right) \xi_0^2, \quad
    \xi_2 = \frac{B^2}{3TT'} \xi_0^3,
  \]
  and $\xi_\ell$ ($2<\ell\leq L$) is the solution of
  \begin{equation*}
    \xi_0^{-3} \sum_{\substack{0\leq i,j,k\leq \ell \\ i+j+k\leq \ell}}  \xi_i\xi_j\xi_k = 1\mp \left(\frac{B}{T}+\frac{B}{T'}\right)\sum_{0\leq i\leq \ell-1} \xi_i + \frac{B^2}{TT'} \sum_{\substack{0\leq i,j\leq \ell-2 \\ i+j\leq \ell-2}}  \xi_i\xi_j.
  \end{equation*}
  Note that
  \[
    \xi_\ell = \frac{\xi_0}{3} \bigg( \mp \left(\frac{B}{T}+\frac{B}{T'}\right) \xi_{\ell-1}
    + \frac{B^2}{TT'} \sum_{\substack{0\leq i,j\leq \ell-2 \\ i+j= \ell-2}}  \xi_i\xi_j - \xi_0^{-3} \sum_{\substack{0\leq i,j,k\leq \ell-1  \\ i+j+k=\ell}}  \xi_i\xi_j\xi_k
    \bigg), \quad  \ell \leq L.
  \]
  By induction we have
  \[
    \xi_\ell = P_{\ell}\left(\frac{B}{T},\frac{B}{T'} \right) \xi_0^{\ell+1} = O_L\left( \frac{B^\ell}{|T'|^\ell} \right), \quad 0\leq \ell\leq L,
    \quad \textrm{and} \quad
    \xi_{L+1} \ll_L \frac{B^{L+1}}{|T'|^{L+1}},
  \]
  where $P_\ell$ is a certain homogeneous polynomial of degree $\ell$.
  Note that
  \begin{align*}
    h(\xi_*) & = \mp \frac{B\xi_*}{2\pi} \log \left( \frac{yN \xi_*^3}{2\pi e (T \mp B \xi_*)|T'\mp B\xi_*|}  \right)
    - \frac{T}{2\pi} \log \frac{T\mp B \xi_*}{2e}
    - \frac{T'}{2\pi} \log \frac{|T'\mp B \xi_*|}{2e}
  \end{align*}
  Note that $\xi_*\asymp 1$ and $B/T=o(1)$. By the Taylor expansion, we get
  \begin{align*}
    h(\xi_*)
    & = - \frac{T}{2\pi} \log \frac{T}{2e}
    - \frac{T'}{2\pi} \log \frac{|T'|}{2e}
    \pm \frac{B}{2\pi} \xi_*
    + \frac{1}{2\pi} \sum_{j\geq1} \frac{1}{j} \left(\frac{(\pm B)^{j}}{T^{j-1}}+\frac{(\pm B)^{j}}{T'^{j-1}}\right)  \xi_*^{j}  \\
    & = - \frac{T}{2\pi} \log \frac{T}{2e}
    - \frac{T'}{2\pi} \log \frac{|T'|}{2e}  \pm \frac{B}{2\pi} \sum_{0\leq \ell\leq L} Q_\ell^\pm \left(\frac{B}{T},\frac{B}{T'} \right) \xi_0^{\ell+1}
    + O_L\left( \frac{B^{L+2}}{|T'|^{L+1}} \right),
  \end{align*}
  where $Q_\ell^\pm$ is a certain homogeneous polynomial of degree $\ell$.
  Note that we have $Q_0^\pm\left(\frac{B}{T},\frac{B}{T'} \right) =3$ and
  $Q_1^\pm\left(\frac{B}{T},\frac{B}{T'} \right) = \mp \frac{1}{2} \left(\frac{B}{T}+\frac{B}{T'}\right) $.
  Hence by \eqref{eqn:Gpm1} we get
  \begin{multline*}
    G^{\pm_1}(y)  =  (\pi^2 y)^{it} (Ny)^{1/2}
      e\bigg(  - \frac{T}{2\pi} \log \frac{T}{2e}
    - \frac{T'}{2\pi} \log \frac{|T'|}{2e}
     \pm \frac{B}{2\pi} \sum_{0\leq \ell\leq L} Q_\ell^\pm \left(\frac{B}{T},\frac{B}{T'} \right) \xi_0^{\ell+1}\bigg)
      \\
      \cdot W_3^{\pm_1}\left( \frac{B} { \frac{NX}{PQ} },\frac{q}{P},\frac{yN}{T|T'|} \right) + O_A(T^{-A}).
  \end{multline*}
  Here we take $L=L(A)$ to be large enough.
  This completes the proof of Lemma \ref{lemma:G>>} (iii).
\end{proof}

By \eqref{eqn:Sr=3}, \eqref{eqn:m-sum==} and Lemma \ref{lemma:G>>} we obtain
\begin{equation}\label{eqn:S<<SM1}
  S_{r}^\pm(N,X,P) \ll T^\varepsilon \sup_{M \asymp \frac{P^2 T|T'|}{N} } |S_{r}^\pm(N,X,P,M)|
  + O(T^{-A})
\end{equation}
if $T^\varepsilon \ll \frac{NX}{PQ}\ll |T'|^{1-\varepsilon}$,
and
\begin{equation}\label{eqn:S<<SM2}
  S_{r}^\pm(N,X,P) \ll T^\varepsilon \sup_{M \ll \frac{P^2 T^2}{N} + \frac{NX^2}{Q^2} } |S_{r}^\pm(N,X,P,M)|
  + O(T^{-A})
\end{equation}
if $ \frac{NX}{PQ}\gg |T'|^{1-\varepsilon}$,
where
\begin{multline*}
  S_{r}^\pm(N,X,P,M)  : =
    X \frac{N^{1/2}  }{Q}\sum_{\substack{q\sim P}} \;\frac{1}{q} U\left(\frac{q}{P}\right) \; \sideset{}{^\star}\sum_{\substack{a\bmod{q} }}
      \frac{NX}{Q}  \sum_{n_1|qr}
      \sum_{\substack{n_2=1 \\n_2 \asymp \frac{r  N^{2}X^3} {n_1^2 Q^3} }}^{\infty}
              \frac{A(n_2,n_1)}{n_1n_2} S\left(-r\bar{a},\pm n_2;\frac{rq}{n_1}\right)
      \\
    \cdot   V\left(\frac{ n_1^2n_2  Q^3}{r  N^{2}X^3}\right)
    \sum_{\pm_1}  \sum_{\substack{m\geq1 \\ m\asymp M}} \frac{ \lambda_{f}(m) }{m^{1/2}} \left(\frac{m}{q^2}\right)^{it}
  e\left(\frac{\pm_1\bar{a} m}{q}\right) W\left( \frac{m}{M}\right)
   \mathcal{I}^{\pm_1}(n_2,n_1,r,m,q) ,
\end{multline*}
where $U,\; V,\; W$ are certain $T^\varepsilon$-inert functions with compact support in $\mathbb{R}_{>0}$ and
\begin{equation}\label{eqn:I-mathcal1}
  \mathcal{I}^{\pm_1}(n_2,n_1,r,m,q) = e\left( \pm \frac{B}{2\pi} \sum_{0\leq \ell \leq L} Q_\ell^\pm \left(\frac{B}{T},\frac{B}{T'} \right) \xi_0^{\ell+1} \right)
\end{equation}
with $B=\frac{ N^{1/3} ( n_1^2n_2)^{1/3} }{ r^{1/3} q} $ and $\xi_0=\left(\frac{2\pi q^2 T|T'|}{mN}\right)^{1/3}$ if $T^\varepsilon \ll \frac{NX}{PQ}\ll |T'|^{1-\varepsilon}$, and
\begin{equation}\label{eqn:I-mathcal2}
  \mathcal{I}^{\pm_1}(n_2,n_1,r,m,q) =  \left( \frac{PQ}{NX}\right)^{1/2}
    \int_{\mathbb{R}}
             \left(\frac{  n_1^2n_2}{ mq} \right)^{i\tau}
             W_1\left(\frac{ n_1^2n_2  Q^3}{r  N^{2}X^3}, \frac{\pm \tau} { \frac{NX}{PQ} } ,\frac{q}{P} \right)   w^{\pm_1}(\tau)  \dd \tau
\end{equation}
if $ \frac{NX}{PQ}\gg |T'|^{1-\varepsilon}$.
Here we  have used the Mellin technique to remove the weight function $W^{\pm_1}_3$ to get \eqref{eqn:I-mathcal1} without writing explicitly the dependence on those new parameters.

Changing the order of summations, we get
\begin{multline}\label{eqn:S(NXPM)}
   S_{r}^\pm(N,X,P,M)  =
    X \frac{N^{1/2}  }{Q}
      \frac{NX}{Q}  \sum_{n_1\ll Pr} \frac{1}{n_1} \sum_{\pm_1}
      \sum_{\substack{n_2=1 }}^{\infty}
              \frac{A(n_2,n_1)}{n_2}   V\left(\frac{ n_1^2n_2  Q^3}{r  N^{2}X^3}\right)\\
    \cdot
              \sum_{\substack{q\sim P \\ n_1\mid qr}} \;\frac{1}{q}U\left(\frac{q}{P}\right)
    \sum_{\substack{m\geq1 \\ m\asymp M }} \frac{ \lambda_{f}(m) }{m^{1/2}} \left(\frac{m}{q^2}\right)^{it} W\left(\frac{m}{M}\right)
  \mathcal{C}^{\pm_1}(n_2,n_1,r,m,q) \mathcal{I}^{\pm_1}(n_2,n_1,r,m,q) ,
\end{multline}
where
\begin{equation}\label{eqn:C-mathcal}
  \begin{split}
    \mathcal{C}^{\pm_1}(n_2,n_1,r,m,q)  & :=   \sideset{}{^\star}\sum_{\substack{a\bmod{q} }}
  e\left(\frac{\pm_1\bar{a} m}{q}\right)
  S\left(-r\bar{a},\pm n_2;\frac{rq}{n_1}\right)
  \\
  & =
  \;  \sideset{}{^\star}\sum_{\alpha \bmod rq/n_1}
  e\left(\frac{\pm n_2\bar\alpha}{rq/n_1}\right)
  \sideset{}{^\star}\sum_{\substack{a\bmod{q} }}
  e\left(\frac{- n_1\bar{a}  \alpha \pm_1\bar{a} m}{q}\right)
  \\
  & =    \sum_{d\mid q} d \mu\left(\frac{q}{d}\right)
  \;  \sideset{}{^\star}\sum_{\substack{\alpha \bmod rq/n_1 \\ \pm_1 m \equiv n_1\alpha  \bmod d}}
  e\left(\frac{\pm n_2 \bar\alpha}{rq/n_1}\right).
  \end{split}
\end{equation}
Here we have used the following identity for the Ramanujan sum
\begin{align*}
   R_q(b) & = \;\sideset{}{^\star}\sum_{a\bmod{q}}
   e\left( \frac{b \overline{a}}{q} \right)
  =
    \sum_{d\mid (q,b)} d \mu\left(\frac{q}{d}\right)
   .
\end{align*}

\subsection{The non oscillating case}
If $\frac{NX}{PQ}\ll T^\varepsilon$, then we have
$X\ll \frac{PQ} {N}T^\varepsilon$ and
\begin{align*}
  S_{r}^\pm(N,X,P)  & =
    \frac{1}{Q}\sum_{\substack{q\sim P}} \int_\mathbb{R} V\left(\frac{\pm x}{X}\right) \;  \sideset{}{^\star}\sum_{\substack{a\bmod{q} }}
    \sum_{\pm} \sum_{n_1|qr} \sum_{n_2=1}^{\infty}
              \frac{A(n_2,n_1)}{n_1n_2} \\
  &  \cdot  S\left(-r\bar{a},\pm n_2;\frac{rq}{n_1}\right)
              \Psi_x^{\pm}\left(\frac{n_1^2n_2}{q^3r}\right)
   \sum_{\substack{m\geq1}} \lambda_f(m) e\left(\frac{m a}{q}\right)
   e\left(\frac{m x}{qQ}\right) m^{-it} W\left(\frac{m}{N}\right)
  \mathrm{d}x  .
\end{align*}
We now apply the Voronoi summation formula (see Lemma \ref{lem:VSF2}) to the sum over $m$ getting
\begin{equation}\label{eqn:m-sum==2}
  \sum_{\substack{m\geq1}} \lambda_f(m) e\left(\frac{m a}{q}\right)
   e\left(\frac{m x}{qQ}\right) m^{-it} W\left(\frac{m}{N}\right)
  \\
  = q \sum_{\pm_1}  \sum_{m\geq1} \frac{ \lambda_{f}(m) }{m}
  e\left(\frac{\pm_1\bar{a} m}{q}\right) G^{\pm_1} \left(\frac{m}{q^2} \right),
\end{equation}
where  $g(m)= e\left(\frac{m x}{qQ}\right) m^{-it} W\left(\frac{m}{N}\right)$
and $G^{\pm_1}$ is defined as in \eqref{eqn:G}.

\begin{lemma}\label{lemma:G<<}
  Assume $x\asymp X$ and $q\sim P$.  If $\frac{NX}{PQ}\ll T^{\varepsilon}$, then we have
   $G^{\pm_1}(y)\ll y^{-6} T^{-A}$  unless $yN\asymp T |T'|$, in which case we have
    $G^{\pm_1}(y) \ll  T^{1/2+\varepsilon} |T'|^{1/2}$.
\end{lemma}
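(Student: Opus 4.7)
The plan is to adapt the Mellin--Barnes analysis from the proof of Lemma \ref{lemma:G>>}(iii) to the non-oscillating regime $NX/(PQ)\ll T^\varepsilon$, where the substitution $\tau=\pm B\xi$ used there is no longer effective because $B\ll T^\varepsilon$. Starting from the representation \eqref{eqn:G(y)} with $g(u)=e(ux/(qQ))u^{-it}W(u/N)$, I would first isolate the effective support of $\tilde{g}$. Substituting $u=N\xi$ gives
\[
\tilde{g}(1/2-i\tau+it)=N^{1/2-i\tau}\int_0^\infty W(\xi)\xi^{-1/2-i\tau}e\!\left(\tfrac{N\xi x}{qQ}\right)d\xi,
\]
with $\xi$-phase derivative $\tfrac{Nx}{qQ}-\tfrac{\tau}{2\pi\xi}$. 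Since $\tfrac{Nx}{qQ}\ll T^\varepsilon$, this derivative is $\gg|\tau|$ whenever $|\tau|\gg T^\varepsilon$, so iterated integration by parts via Lemma \ref{lemma:repeated_integration_by_parts} gives $|\tilde{g}(1/2-i\tau+it)|\ll_A N^{1/2}(1+|\tau|)^{-A}$, effectively truncating the $\tau$-integral in \eqref{eqn:G(y)} to $|\tau|\ll T^\varepsilon$ up to a negligible error.

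On this truncated range, $|\tau-T|\asymp T$ and $|\tau-T'|\asymp|T'|\geq T^{3/5}$, so Stirling's formula \eqref{eqn:Stirling} applies cleanly to each Gamma ratio in $\gamma_2^{\pm_1}$ and yields
\[
\gamma_2^{\pm_1}(-1/2+i\tau-it)=e^{i\Phi(\tau)}\omega^{\pm_1}(\tau)+O(T^{-J}),
\]
with $\Phi(\tau)=(\tau-T)\log\tfrac{|\tau-T|}{2e}+(\tau-T')\log\tfrac{|\tau-T'|}{2e}$ and $\omega^{\pm_1}$ bounded. The reduced integral then has phase $h(\tau)=-\tau\log(\pi^2yN)+\Phi(\tau)$, with $h'(\tau)=\log\tfrac{|\tau-T||\tau-T'|}{4\pi^2yN}$ and $|h^{(j)}(\tau)|\asymp 1/|T'|^{j-1}$ for $j\geq 2$. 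The stationary equation $|\tau_*-T||\tau_*-T'|=4\pi^2yN$ admits a root in $|\tau|\ll T^\varepsilon$ precisely when $yN\asymp T|T'|$, which is exactly the dichotomy in the lemma.

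When $yN\asymp T|T'|$ I take the trivial bound: the integrand in the reduced form is $O(1)$ on an interval of length $T^\varepsilon$, so $G^{\pm_1}(y)\ll(yN)^{1/2}T^\varepsilon\ll T^{1/2+\varepsilon}|T'|^{1/2}$. When $yN$ is not comparable to $T|T'|$, $|h'(\tau)|$ is bounded below on the support by $|\log(yN/(T|T'|))|\gg 1$, and I would apply Lemma \ref{lemma:repeated_integration_by_parts} with parameters $Y=Q=|T'|$, so that $\sqrt{Y}\,R\gg|T'|^{1/2}\gg T^{3/10}$, making the $(QR/\sqrt{Y})^{-A}$ term arbitrarily small. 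The main obstacle will be the second error term $(RV)^{-A}$: since the amplitude varies at scale $V=1$ and $R$ is only of logarithmic size, this term does not immediately yield $T^{-A}$. To overcome this I would exploit the full Schwartz character of the amplitude $F(\tau)=\tilde{g}(1/2-i\tau+it)/N^{1/2-i\tau}$ inherited from Step 1, reinterpreting the residual $\tau$-integral as essentially a Fourier transform of a Schwartz function evaluated at a point of size $|\log(yN/(T|T'|))|$; its rapid decay, combined with a dyadic decomposition of $yN/(T|T'|)$, then delivers the claimed $y^{-6}T^{-A}$ bound.
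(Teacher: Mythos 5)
Your opening moves match the paper: start from the Mellin--Barnes representation \eqref{eqn:G(y)}, use repeated integration by parts on $\tilde g$ to restrict $|\tau|\ll T^{\varepsilon}$, apply Stirling to $\gamma_2^{\pm_1}$, extract the phase $\Phi$ and read off the dichotomy $yN\asymp T|T'|$ from the stationary equation, and bound the on-critical regime trivially by $(yN)^{1/2}T^{\varepsilon}$. The gap is at the end. You correctly diagnose that if $\tilde g$ (repackaged as $F(\tau)$) is carried as part of the $\tau$-amplitude, then the amplitude varies at unit scale $V=1$, so the $(RV)^{-A}$ term in Lemma~\ref{lemma:repeated_integration_by_parts} gives nothing with $R$ only of logarithmic size. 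But the proposed remedy --- rapid decay of a Schwartz Fourier transform evaluated at a point of size $|\log(yN/(T|T'|))|$ --- does not produce a power saving: the Fourier transform of a Schwartz function with unit-scale derivatives is $\ll(1+|\nu|)^{-A}$, and for $|\nu|$ of merely logarithmic size this is $\ll(\log T)^{-A}$, nowhere near $T^{-A}$; a dyadic decomposition of $yN/(T|T'|)$ cannot convert that into a power. (The idea could in principle be rescued because $F(\tau)$ is in fact the Fourier transform of the \emph{compactly supported} function $\eta\mapsto W(e^{\eta})e^{\eta/2}e(Ne^{\eta}x/(qQ))$, so $\hat F$ vanishes identically outside a bounded set rather than merely decaying rapidly; but one would then also have to control the nonlinear part of $\Phi(\tau)$ to all orders, and neither of these steps is in your sketch.)

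The paper's proof avoids the problem entirely by never treating $\tilde g$ as an amplitude. After Stirling it leaves the $\xi$-integral coming from $\tilde g$ on the \emph{outside} and applies Lemma~\ref{lemma:repeated_integration_by_parts} to the inner $\tau$-integral, whose phase is $h(\tau)=-\tau\log(\pi^2 yN\xi)+(\tau-T)\log\tfrac{T-\tau}{2e}+(\tau-T')\log\tfrac{|T'-\tau|}{2e}$ (so the $\xi^{-i\tau}$ factor is absorbed into the phase, not the amplitude) and whose amplitude is simply $w_{\epsilon_f,J}^{\pm_1}(\tau-T)\,w_{\epsilon_f,J}^{\pm_1}(\tau-T')\,U(\tau/T^{2\varepsilon})$. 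Since this satisfies $w^{(j)}(\tau)\ll T^{-2j\varepsilon}$, one may take $V=T^{2\varepsilon}$, $R=1$, $Y=Q=|T'|$, and then both $(RV)^{-A}=T^{-2\varepsilon A}$ and $(QR/\sqrt Y)^{-A}=|T'|^{-A/2}$ are genuine power savings, giving $G^{\pm_1}(y)\ll(Ny)^{1/2}T^{-A}$ off the critical range. This choice of amplitude --- not the one with $\tilde g$ folded in --- is what makes the integration-by-parts close, and it is the step missing from your proposal.
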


\begin{proof}
  We first consider the case  $yN\gg T^{2+\varepsilon}$. By the same argument as in the proof of Lemma \ref{lemma:G>>} (i), we get $G^{\pm_1}(y)\ll y^{-6} T^{-A}$  if  $yN\gg T^{2+\varepsilon}$.

  Now assume $yN\ll T^{2+\varepsilon}$.
  As in the proof of Lemma \ref{lemma:G>>}, we have
  \begin{equation*}
    G^{\pm_1}(y) = \frac{\epsilon_f^{(1\mp_1 1)/2}}{4\pi^2}  \int_{\mathbb{R}} (\pi^2 y)^{1/2-i\tau+it}
    \gamma_2^{\pm_1}(-1/2+i\tau-it) \tilde{g}(1/2-i\tau+it) \dd \tau.
  \end{equation*}
  If $\frac{NX}{PQ}\ll T^{\varepsilon}$, then
  \begin{align*}
    \tilde{g}(1/2-i\tau+it)
    & = \int_{\mathbb{R}}
    e\left(\frac{u x}{qQ}\right) u^{-it} W\left(\frac{u}{N}\right) u^{1/2-i\tau+it-1} \dd u \\
    & = N^{1/2-i\tau} \int_{\mathbb{R}}
    e\left(\frac{Nx}{qQ} \xi- \frac{1}{2\pi} \tau \log \xi \right)  W\left(\xi\right)  \xi^{-1/2}  \dd \xi .
  \end{align*}
  By  Lemma \ref{lemma:repeated_integration_by_parts} with
  \[
    \textrm{ $X=1$, $V=T^{-\varepsilon}$, $Y=R=|\tau|$ and $Q=1$,}
  \]
  we have $\tilde{g}(1/2-i\tau+it) \ll |\tau|^{-A}$ if $|\tau|\gg T^{2\varepsilon}$.
  By Stirling's formula we have
  \begin{multline*}
    G^{\pm_1}(y) = (\pi^2 y)^{it}  (Ny)^{1/2} \int_{\mathbb{R}} \int_{\mathbb{R}} (\pi^2 y N\xi)^{-i\tau}
    \exp\left( i(\tau - T)\log\frac{T-\tau}{2e} + i(\tau-T')\log\frac{|T'-\tau|}{2e} \right)
    \\ \cdot w_{\epsilon_f,J}^{\pm_1}(\tau-T) w_{\epsilon_f,J}^{\pm_1}(\tau-T')
    U\left( \frac{\tau}{T^{2\varepsilon}} \right)
     \dd \tau  \;
    e\left(\frac{Nx}{qQ} \xi\right) W\left(\xi\right)  \xi^{-1/2}  \dd \xi  + O(T^{-A}),
  \end{multline*}
  where $U$ is a fixed compactly supported smooth function satisfying that  $U^{(j)}(u) \ll_j 1$ for all $j\geq0$, and $U(u)=1$ if $u\in[-1,1]$.
  Let (temporarily)
  \[
    h(\tau) = - \tau \log (\pi^2 y N\xi) + (\tau - T)\log\frac{T-\tau}{2e} + (\tau-T')\log\frac{|T'-\tau|}{2e} .
  \]
  Then we have
  \[
    h'(\tau) = -  \log (\pi^2 y N\xi)
    +  \log\frac{T-\tau}{2}
    +  \log\frac{|T'-\tau|}{2} ,
  \]
  \[
    h''(\tau) =   \frac {-1}{T-\tau}  -  \frac{1} {T'-\tau} , \quad
    h^{(j)}(\tau) \ll |T'|^{-j+1}, \quad j\geq2.
  \]
  Note that the weight function $w(\tau) = w_{\epsilon_f,J}^{\pm_1}(\tau-T) w_{\epsilon_f,J}^{\pm_1}(\tau-T')
    U\left( \frac{\tau}{T^{2\varepsilon}} \right)$ satisfies that
    $w^{(j)}(\tau) \ll T^{-2j\varepsilon}$.
  By Lemma \ref{lemma:repeated_integration_by_parts} with
  \[
    \textrm{ $X=1$, $V=T^{2\varepsilon}$, $Y=Q=|T'|$ and $R=1$,}
  \]
  we have $G^{\pm_1}(y) \ll T^{-A}$ unless $yN\asymp T |T'|$, in which case we have
  $G^{\pm_1}(y) \ll (yN)^{1/2} T^\varepsilon \ll T^{1/2+\varepsilon}|T'|^{1/2}$.
\end{proof}

By \eqref{eqn:Sr=3} and \eqref{eqn:m-sum==2} we have
\begin{align*}
  S_{r}^\pm(N,X,P)
  & =\frac{1}{Q}\sum_{\substack{q\sim P}} \int_\mathbb{R} V\left(\frac{\pm x}{X}\right) \;  \sum_{\pm} \sum_{n_1|qr} \sum_{n_2=1}^{\infty}
              \frac{A(n_2,n_1)}{n_1n_2}
              \Psi_x^{\pm}\left(\frac{n_1^2n_2}{q^3r}\right)\\
  & \hskip 60pt \cdot
  q \sum_{\pm_1}  \sum_{m\geq1} \frac{ \lambda_{f}(m) }{m}
  \mathcal{C}^{\pm_1}(n_2,n_1,r,m,q) G^{\pm_1} \left(\frac{m}{q^2} \right)
  \mathrm{d}x ,
\end{align*}
where $\mathcal{C}^{\pm_1}(n_2,n_1,r,m,q)$ is defined in \eqref{eqn:C-mathcal}. Note that we have
\[
  \mathcal{C}^{\pm_1}(n_2,n_1,r,m,q) \ll
  \sum_{d\mid q} d
  \;  \sideset{}{^\star}\sum_{\substack{\alpha \bmod rq/n_1 \\ \pm_1 m \equiv n_1\alpha  \bmod d}} 1
  \ll rq^{1+\varepsilon}.
\]
By Lemmas \ref{lemma:Psi} and \ref{lemma:G<<}, we obtain
\begin{align*}
  S_{r}^\pm(N,X,P)  & \ll
    \frac{N^\varepsilon}{Q}\sum_{\substack{q\sim P}} rq X
   \sum_{n_1|qr} \frac{1}{n_1} \sum_{n_2 \ll \frac{rP^3}{Nn_1^2}T^\varepsilon}
              \frac{|A(n_2,n_1)|}{n_2}
  q   \sum_{m\asymp \frac{P^2 T |T'|}{N}} \frac{ |\lambda_{f}(m)| }{m} T^{1/2} |T'|^{1/2}
  + O(T^{-A}).
\end{align*}
By \eqref{eqn:RS2} and \eqref{eqn:RS3-1} we get
\begin{align*}
  S_{r}^\pm(N,X,P)  & \ll N^\varepsilon
    \frac{1}{Q}\sum_{\substack{q\sim P}} rq X
   \sum_{n_1|qr} \frac{1}{n_1} n_1^{\theta_3 }
  q  T^{1/2} |T'|^{1/2} + O(T^{-A})  \nonumber \\
  & \ll N^\varepsilon   \frac{rP^3 XT^{1/2} |T'|^{1/2}}{Q} + O(T^{-A}).
\end{align*}
Note that by our assumption we have $X\ll \frac{PQ} {N}T^\varepsilon$. Hence we get
\begin{align}\label{eqn:SrN<<1}
  S_{r}^\pm(N,X,P)  &
   \ll   N^\varepsilon  \frac{rP^3  PQ T^{1/2} |T'|^{1/2}}{QN}
   \ll N^\varepsilon   \frac{r Q^4 T^{1/2} |T'|^{1/2}}{N} \nonumber \\
  &
  \ll N^{1/2+\varepsilon} \left(T^{7/8} |T'|^{19/40} +  T^{57/56} |T'|^{17/56} \right),
\end{align}
provided $N\ll \frac{T^{3/2+\varepsilon}|T'|^{3/2}}{r^2}$ and $Q=\sqrt{\frac{N}{K}}$ with
\begin{equation}\label{eqn:K>>}
  K\geq  \left\{ \begin{array}{ll}
    T^{3/16}|T'|^{31/80}, & \textrm{if $T'\gg T^{5/6}$,} \\
    T^{13/112}|T'|^{53/112}, & \textrm{if $T^{3/5+\varepsilon} \ll T'\ll T^{5/6}$.}
   \end{array}\right.
\end{equation}

\section{Applying Cauchy and Poisson}\label{sec:Cauchy-Poisson}

Assume $\frac{NX}{PQ}\gg T^\varepsilon$.
Write  $q=q_1q_2$ with $\frac{n_1}{(n_1,r)}\mid q_1 \mid (n_1r)^\infty$ and $(q_2,n_1r)=1$.
By \eqref{eqn:S(NXPM)} we have
\begin{multline*}
   S_{r}^\pm(N,X,P,M)  =
    \frac{ Q }{r N^{1/2} P X  M^{1/2}}
    \sum_{\pm_1} \sum_{n_1\ll Pr}  n_1
      \sum_{\substack{n_2=1 }}^{\infty}
        A(n_2,n_1)   V\left(\frac{ n_1^2n_2  Q^3}{r  N^{2}X^3}\right)
              \frac{r  N^{2}X^3}{ n_1^2n_2  Q^3}
      \sum_{\frac{n_1}{(n_1,r)}\mid q_1 \mid (n_1r)^\infty} \\
    \cdot
      \sum_{\substack{q_2\sim P/q_1 \\ (q_2,n_1r)=1}} \;\frac{P}{q}U\left(\frac{q}{P}\right)q^{-2it}
    \sum_{\substack{m\geq1 \\ m\asymp M }} \frac{ \lambda_{f}(m) M^{1/2}}{m^{1/2}} m^{it} W\left(\frac{m}{M}\right)
  \mathcal{C}^{\pm_1}(n_2,n_1,r,m,q) \mathcal{I}^{\pm_1}(n_2,n_1,r,m,q) .
\end{multline*}
Since the cases $\pm_1=+$ and $\pm_1=-$ can be estimated in the same way, we do not write down $\pm_1$ explicitly from now on.  Hence we have
\begin{align*}
  S_{r}^\pm(N,X,P,M)
  &\ll  \frac{ Q }{r N^{1/2} P X  M^{1/2}}
   \sum_{n_1\ll Pr} n_1
     \sum_{\frac{n_1}{(n_1,r)}\mid q_1 \mid (n_1r)^\infty}  \sum_{\substack{n_2\geq 1 \\n_2 \asymp \frac{r  N^{2}X^3} {n_1^2 Q^3} }}
             |A(n_2,n_1)|  |\mathcal B(n_2,n_1,q_1)| ,
\end{align*}
where
\begin{equation*}
  \mathcal B(n_2,n_1,q_1) = \sum_{\substack{q_2\sim P/q_1 \\ (q_2,n_1r)=1}}  b_{q_2}
    \sum_{\substack{ m\asymp M}} c_m
    \; \mathcal{C}(n_2,n_1,r,m,q_1q_2)  \mathcal{I}(n_2,n_1,r,m,q_1q_2)
\end{equation*}
with  $b_{q_2}=\frac{P}{q_1q_2}U\left(\frac{q_1q_2}{P}\right)(q_1q_2)^{-2it}$ and $c_m=\frac{ \lambda_{f}(m) M^{1/2}}{m^{1/2}} m^{it} W\left(\frac{m}{M}\right)$ such that
\begin{equation}\label{eqn:b&c}
  |b_{q_2}| \ll 1 \quad \textrm{and} \quad
  \sum_{m\sim M} |c_m|^2 \ll M.
\end{equation}
Here we have used  \eqref{eqn:RS2} for the $m$-sum.
By the Cauchy--Schwarz inequality, we get
\begin{multline*}
  S_{r}^\pm(N,X,P,M)
  \ll
      \frac{ Q }{r N^{1/2} P X  M^{1/2}}
      \sum_{n_1} \left(\sum_{\substack{n_2}}
             |A(n_2,n_1)|^2  \right)^{1/2}
             n_1
     \sum_{q_1}
     \left(\sum_{\substack{n_2}}|\mathcal B(n_2,n_1,q_1)|^2\right)^{1/2}
   \\
   \ll
      \frac{ Q }{r N^{1/2} P X  M^{1/2}}
      \sum_{n_1} \left(\sum_{\substack{n_2}}
             |A(n_2,n_1)|^2  \right)^{1/2}
             n_1 \left(\sum_{q_1}1\right)^{1/2}
     \left(\sum_{q_1} \sum_{\substack{n_2}}|\mathcal B(n_2,n_1,q_1)|^2\right)^{1/2} .
\end{multline*}
Note that we have
\[
  \sum_{q_1}1 \leq \sum_{\substack{\frac{n_1}{(n_1,r)}\mid q_1 \mid (n_1r)^\infty \\ q_1\ll P}} 1 \ll P^\varepsilon \sum_{q_1 \mid (n_1r)^\infty} q_1^{-\varepsilon}
  =P^\varepsilon \prod_{p \mid n_1r} \sum_{k=1}^\infty p^{-k\varepsilon}
  \ll P^\varepsilon \exp\left(\sum_{p \mid n_1r} O(p^{-\varepsilon}) \right) \ll N^\varepsilon.
\]
Here we have used $\sum_{p \mid n_1r} O(p^{-\varepsilon})=O(\omega(n_1r))=o(\log n_1r)+O(1) = o(\log N)$. Hence we have
\begin{multline} \label{eqn:S<<Omega}
   S_{r}^\pm(N,X,P,M) \ll
     \frac{ N^\varepsilon Q }{r N^{1/2} P X  M^{1/2}}
      \left(\sum_{n_1}\sum_{\substack{n_2}}
             |A(n_2,n_1)|^2  \right)^{1/2}
        \left( \sum_{n_1}     n_1^2
     \sum_{q_1} \sum_{\substack{n_2}}|B(n_2,n_1,q_1)|^2 \right)^{1/2}
   \\
  \ll \frac{ N^\varepsilon Q }{r N^{1/2} P X  M^{1/2}}
      \left(\frac{r  N^{2}X^3} {Q^3} \right)^{1/2}
      \left(\sum_{n_1\ll Pr}   n_1^2
   \sum_{\frac{n_1}{(n_1,r)}\mid q_1 \mid (n_1r)^\infty}  \Omega_\pm \right)^{1/2} ,
   \hskip 30pt
\end{multline}
where
\begin{equation*}
  \Omega_\pm =
   \sum_{\substack{n_2\geq1}}   W\left(\frac{n_2}{N_2}\right)
  \bigg| \sum_{\substack{q_2\sim P/q_1 \\ (q_2,n_1r)=1}}  b_{q_2}
    \sum_{\substack{m\geq1 \\ m\asymp M}} c_m
    \; \mathcal{C}(n_2,n_1,r,m,q_1q_2) \mathcal{I}(n_2,n_1,r,m,q_1q_2) \bigg|^2,
\end{equation*}
where $N_2 =  \frac{r  N^{2}X^3} {n_1^2 Q^3} $ and $W$ is a $1$-inert function with compact support in $\mathbb{R}_{>0}$.
Opening the square and rearranging the sums, we get
\begin{multline*}
  \Omega_\pm =  \sum_{\substack{q_2\sim P/q_1 \\ (q_2,n_1r)=1}}  b_{q_2}
    \sum_{\substack{m\geq1 \\ m\asymp M}} c_m
  \sum_{\substack{q_2'\sim P/q_1 \\ (q_2',n_1r)=1}} \overline{ b_{q_2'} }
    \sum_{\substack{m'\geq1 \\ m'\asymp M}} \overline{c_{m'}}
   \sum_{\substack{n_2\geq1}}   W\left(\frac{n_2}{N_2}\right)
    \\
    \cdot
   \; \mathcal{C}(n_2,n_1,r,m,q_1q_2) \overline{\mathcal{C}(n_2,n_1,r,m',q_1q_2')}
   \; \mathcal{I}(n_2,n_1,r,m,q_1q_2) \overline{\mathcal{I}(n_2,n_1,r,m',q_1q_2')}.
\end{multline*}
Applying Poisson summation on the sum over $n_2$ modulo $ rq_1 q_2q_2'/n_1 $ (Lemma \ref{lem:Poisson}), we arrive at
\begin{equation}\label{eqn:Omega=}
  \Omega_\pm =
    \sum_{\substack{q_2\sim P/q_1 \\ (q_2,n_1r)=1}}  b_{q_2}
    \sum_{\substack{m\geq1 \\ m\asymp M}} c_m
  \sum_{\substack{q_2'\sim P/q_1 \\ (q_2',n_1r)=1}} \overline{ b_{q_2'} }
    \sum_{\substack{m'\geq1 \\ m'\asymp M}} \overline{c_{m'}}
   \sum_{\substack{n\in \mathbb{Z}}}  \mathfrak{C}(n) \mathfrak{I}(n),
\end{equation}
where
\begin{align*}
  \mathfrak{C}(n)
  & = \frac{1}{rq_1 q_2q_2'/n_1} \sum_{\beta(rq_1 q_2q_2'/n_1)} \mathcal{C}(\beta,n_1,r,m,q_1q_2) \overline{\mathcal{C}(\beta,n_1,r,m',q_1q_2')}
  e\left( \frac{n\beta}{rq_1 q_2q_2'/n_1}\right) \\
  &  = \sum_{d\mid q} d \mu\left(\frac{q}{d}\right)
  \sum_{d'\mid q'} d' \mu\left(\frac{q'}{d'}\right)\;
  \mathop{ \sideset{}{^\star}\sum_{\substack{\alpha \bmod rq/n_1 \\ \pm_1 m \equiv n_1\alpha  \bmod d}}
  \;  \sideset{}{^\star}\sum_{\substack{\alpha' \bmod rq'/n_1 \\ \pm_1 m' \equiv n_1\alpha'  \bmod d'}} }_{   \pm q_2' \bar\alpha \mp q_2 \bar\alpha' \equiv -n \bmod rq_1 q_2q_2'/n_1 } 1
\end{align*}
and
\begin{align}\label{eqn:I(n)}
  \mathfrak{I}(n)
  &= \int_{\mathbb{R}}   W\left(\frac{u}{N_2}\right)
    \; \mathcal{I}(u,n_1,r,m,q_1q_2) \overline{\mathcal{I}(u,n_1,r,m',q_1q_2')}
       e\left(\frac{-nu}{rq_1 q_2q_2'/n_1} \right) \dd u \nonumber \\
  &= N_2 \int_{\mathbb{R}}   W\left(\xi\right)
     \; \mathcal{I}(N_2\xi,n_1,r,m,q_1q_2) \overline{\mathcal{I}(N_2\xi,n_1,r,m',q_1q_2')}
       e\left(\frac{-nN_2 \xi}{rq_1 q_2q_2'/n_1} \right) \dd \xi  .
\end{align}

The following lemma on the character sums is essentially due to Munshi \cite{Munshi2018} and is actually the same as the results in Huang--Xu \cite{HuangXu}.
\begin{lemma}\label{lem:C}
  We have  $\mathfrak{C}(0)=0$ unless
        \[ q = q',\]
  in which case we have
  \[
    \mathfrak{C}(0)
        \ll \mathop{\sum_{d\mid q}  \sum_{d'\mid q}}_{(d,d')\mid (m-m')} (d,d') qr.
  \]
  If $n\neq0$, then we have
  \begin{multline*}
    \mathfrak{C}(n) \ll  \frac{rq_1}{n_1} \sum_{d_1\mid q_1}
  \sum_{d_1'\mid q_1}  \min\left\{ d_1' (d_1,n_1) \delta_{(d_1,n_1)\mid m},
  d_1  (d_1',n_1) \delta_{(d_1',n_1)\mid m'} \right\} \\
  \cdot
     \mathop{\sum\sum}_{\substack{d_2 \mid (q_2, \pm n_1 q_2'\pm_1 mn) \\
    d_2' \mid (q_2', \mp n_1 q_2 \pm_1 m'n)}} d_2d_2' \;
    \min\left\{\frac{q_2}{[q_2/(q_2,q_2'),d_2]},\frac{q_2'}{[q_2'/(q_2,q_2'),d_2']} \right\} \; \delta_{(q_2,q_2')\mid n}.
  \end{multline*}
  Here $\delta_{(d_1,n_1)\mid m}=1$ if $(d_1,n_1)\mid m$ holds,  otherwise we have $\delta_{(d_1,n_1)\mid m}=0$.
\end{lemma}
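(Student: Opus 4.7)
The plan is to decompose the character sum $\mathfrak{C}(n)$ using the Chinese Remainder Theorem (CRT) according to the coprime factorization of the modulus $rq_1q_2q_2'/n_1$, analyze each local factor separately, and then recombine. Recall the structural assumptions $(q_2,n_1r)=(q_2',n_1r)=1$ and $q_1\mid (n_1r)^\infty$ with $n_1/(n_1,r)\mid q_1$. These imply that the moduli $rq_1/n_1$, $q_2$, and $q_2'$ split into pieces that are pairwise coprime except possibly at the shared factor $(q_2,q_2')$. Writing $d=d_1d_2$ with $d_1\mid q_1$, $d_2\mid q_2$ (and similarly for $d'$), the outer Möbius--divisor sum factorises as a product of sums over the $q_1$-part and the $q_2q_2'$-part.

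For the $q_1$-part I would reparametrise $\alpha\bmod rq_1/n_1$ and impose the congruences $\pm_1 m\equiv n_1\alpha\pmod{d_1}$, $\pm_1 m'\equiv n_1\alpha'\pmod{d_1'}$. Since $(d_1,n_1)$ must divide $m$ for a solution to exist (and similarly for $d_1'$), and because the remaining congruence from $\mathfrak{C}(n)$ is a single linear condition in $\bar\alpha,\bar\alpha'$ modulo $rq_1/n_1$, the count of solutions is bounded by $\frac{rq_1}{n_1}\cdot\min\{d_1'(d_1,n_1),\,d_1(d_1',n_1)\}$ (up to the divisibility indicators $\delta_{(d_1,n_1)\mid m}$ and $\delta_{(d_1',n_1)\mid m'}$), after taking the smaller of the two natural bounds coming from either side. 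This is a standard local computation: one side fixes $\alpha$ up to a factor $(d_1,n_1)$, the other fixes $\alpha'$, and the remaining freedom is controlled by the divisor $d_1'$ or $d_1$.

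For the $q_2q_2'$-part, the congruence $\pm q_2'\bar\alpha\mp q_2\bar\alpha'\equiv -n\pmod{q_2q_2'/(q_2,q_2')}$ together with the support conditions $d_2\mid q_2$, $d_2'\mid q_2'$ is the heart of the argument. Reducing modulo $q_2$ one gets $\pm_1 mn\equiv \mp n_1q_2'\pmod{d_2}$ (hence $d_2\mid (q_2,\pm n_1q_2'\pm_1 mn)$), and symmetrically for $d_2'$; these produce the divisibility constraints in the stated bound. The condition $(q_2,q_2')\mid n$ is forced by reducing the main congruence modulo $(q_2,q_2')$. The remaining free variable contributes the $\min\bigl\{\tfrac{q_2}{[q_2/(q_2,q_2'),d_2]},\tfrac{q_2'}{[q_2'/(q_2,q_2'),d_2']}\bigr\}$ factor, by choosing which of $\alpha,\alpha'$ to solve for. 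For the case $n=0$, reducing the congruence modulo $q_2$ gives $q_2'\bar\alpha\equiv 0\pmod{q_2}$, forcing $q_2\mid q_2'$; symmetrically $q_2'\mid q_2$; hence $q_2=q_2'$, i.e.\ $q=q'$. Once $q=q'$, the congruence degenerates to $\bar\alpha\equiv\bar\alpha'\pmod{rq/n_1}$, and a direct count of pairs $(d,d',\alpha,\alpha')$ subject to $\pm_1 m\equiv n_1\alpha\pmod d$ and $\pm_1 m'\equiv n_1\alpha\pmod{d'}$ gives the claimed bound, with $(d,d')\mid (m-m')$ arising from compatibility.

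The main obstacle is the careful bookkeeping around the possibly non-trivial GCD $(q_2,q_2')$: both the relevant modulus for CRT and the range of free variables in the character sum change when $q_2$ and $q_2'$ share a factor, and this is where one must decide, symmetrically, whether to solve the linear congruence for $\bar\alpha$ or $\bar\alpha'$ (producing the $\min$ in the bound). Everything else is routine CRT and divisor-sum manipulation; since the analogous character sum is handled in detail in Munshi \cite{Munshi2018} and Huang--Xu \cite{HuangXu}, I would cite those sources and only reproduce the modifications needed for the modulus $rq_1q_2q_2'/n_1$ arising here.
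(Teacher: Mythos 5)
Your proposal is correct and follows essentially the same route as the paper: split $\mathfrak{C}(n)$ by CRT into the $rq_1/n_1$-block $\mathfrak{C}_1$ and the $q_2q_2'$-block $\mathfrak{C}_2$, read off $d_2\mid(q_2,\pm n_1q_2'\pm_1 mn)$, $d_2'\mid(q_2',\mp n_1q_2\pm_1 m'n)$ and $(q_2,q_2')\mid n$ from local reductions, obtain the $\min$ factors by choosing which of $\alpha,\alpha'$ to eliminate, count $\alpha_1$ modulo $d_1/(d_1,n_1)$ subject to $(d_1,n_1)\mid m$, and force $q_2=q_2'$ when $n=0$ by reducing modulo $q_2$ and $q_2'$ and using that $\bar\alpha,\bar\alpha'$ are units. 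The only cosmetic difference is that the paper places the indicators $\delta_{(d_1,n_1)\mid m}$, $\delta_{(d_1',n_1)\mid m'}$ inside the $\min$ rather than as a global factor, but your phrasing makes clear you understand this.
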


\begin{proof}
  If $n=0$, then $ \pm q_2' \bar\alpha \mp q_2 \bar\alpha' \equiv 0 \bmod rq_1 q_2q_2'/n_1$. Since $(\alpha,rq_1 q_2/n_1)=(\alpha',rq_1 q_2'/n_1)=1$, we have $q_2=q_2'$ and then $ \pm \bar\alpha \mp \bar\alpha' \equiv 0 \bmod rq_1 q_2/n_1$, which give $\alpha\equiv \alpha' \bmod rq_1 q_2/n_1$. Hence
  \[
    \mathfrak{C}(0)
    \ll \sum_{d\mid q} d   \sum_{d'\mid q} d'
    \sideset{}{^\star}\sum_{\substack{\alpha \bmod rq/n_1 \\ \pm_1 m \equiv n_1\alpha  \bmod d \\ \pm_1 m' \equiv n_1\alpha  \bmod d'}} 1
        \ll \mathop{\sum_{d\mid q}  \sum_{d'\mid q}}_{(d,d')\mid (m-m')} (d,d') qr.
  \]

  If $n\neq0$, then by the Chinese Remainder Theorem,
  we have $|\mathfrak{C}(n)| \leq \mathfrak{C}_1(n)  \mathfrak{C}_2(n) $, where
  \begin{equation*}
    \mathfrak{C}_1(n) = \sum_{d_1\mid q_1} d_1
  \sum_{d_1'\mid q_1} d_1'
  \mathop{ \sideset{}{^\star}\sum_{\substack{\alpha_1 \bmod rq_1/n_1 \\ \pm_1 m \equiv n_1\alpha_1   \bmod d_1}}
  \;  \sideset{}{^\star}\sum_{\substack{\alpha_1' \bmod rq_1/n_1 \\ \pm_1 m' \equiv n_1\alpha_1'  \bmod d_1'}} }_{   \pm q_2' \bar\alpha_1 \mp q_2 \bar\alpha_1' \equiv -n \bmod rq_1/n_1 } 1
  \end{equation*}
  and
  \begin{equation*}
    \mathfrak{C}_2(n) = \sum_{d_2\mid q_2} d_2
  \sum_{d_2'\mid q_2'} d_2'
  \mathop{ \sideset{}{^\star}\sum_{\substack{\alpha_2 \bmod q_2 \\ \pm_1 m \equiv n_1\alpha_2   \bmod d_2}}
  \;  \sideset{}{^\star}\sum_{\substack{\alpha_2' \bmod q_2' \\ \pm_1 m' \equiv n_1\alpha_2'  \bmod d_2'}} }_{   \pm q_2' \bar\alpha_2 \mp q_2 \bar\alpha_2' \equiv -n \bmod q_2q_2' } 1.
  \end{equation*}

  We first consider $\mathfrak{C}_2(n)$.
  From the congruence $\pm q_2' \bar\alpha_2 \mp q_2 \bar\alpha_2' \equiv -n \bmod q_2q_2' $ we have $(q_2,q_2')\mid n$ and $\pm \frac{q_2'}{(q_2,q_2')}\bar{\alpha_2} \equiv -\frac{n}{(q_2,q_2')} \bmod \frac{q_2}{(q_2,q_2')}$. Since $(n_1,q_2)=1$, we have $\alpha_2 \equiv \pm_1 m \bar{n}_1 \bmod d_2$ and also $\pm q_2' \bar\alpha_2 \equiv -n \bmod d_2$.
  Therefore we get $\pm n_1 q_2'\pm_1 mn \equiv 0 \bmod d_2$. Similarly we have $\mp n_1 q_2 \pm_1 m'n \equiv 0 \bmod d_2'$.
  Note that the congruences $ \bmod\; \frac{q_2}{(q_2,q_2')}$ and $\bmod \; d_2$ determine $\alpha_2 \bmod [q_2/(q_2,q_2'),d_2]$. For each given $\alpha_2$ we have at most one solution of $\alpha_2' \bmod q_2'$. Hence we have
  \[
    \mathfrak{C}_2(n) \ll  \mathop{\sum\sum}_{\substack{d_2 \mid (q_2, \pm n_1 q_2'\pm_1 mn) \\
    d_2' \mid (q_2', \mp n_1 q_2 \pm_1 m'n)}} d_2d_2' \; \frac{q_2}{[q_2/(q_2,q_2'),d_2]} \; \delta_{(q_2,q_2')\mid n}.
  \]
  Similarly we have
  \[
    \mathfrak{C}_2(n) \ll \mathop{\sum\sum}_{\substack{d_2 \mid (q_2, \pm n_1 q_2'\pm_1 mn) \\
    d_2' \mid (q_2', \mp n_1 q_2 \pm_1 m'n)}} d_2d_2' \; \frac{q_2'}{[q_2'/(q_2,q_2'),d_2']} \; \delta_{(q_2,q_2')\mid n}.
  \]

  In $\mathfrak{C}_1(n)$, for each value of $\alpha_1$, the congruence condition $\mod rq_1/n_1$ determines the value of $\alpha_1'$,
  and hence we have
  \begin{equation*}
    \mathfrak{C}_1(n) \leq  \sum_{d_1\mid q_1} d_1
  \sum_{d_1'\mid q_1} d_1'
    \sideset{}{^\star}\sum_{\substack{\alpha_1 \bmod rq_1/n_1 \\ \pm_1 m \equiv n_1\alpha_1   \bmod d_1 }}  1
    .
  \end{equation*}
  Note that $\alpha_1$ is uniquely determined modulo $d_1/(d_1,n_1)$. Since $(\frac{d_1}{(d_1,n_1)},\frac{n_1}{(d_1,n_1)})=1$,  $\frac{d_1}{(d_1,n_1)}\mid \frac{q_1}{(d_1,n_1)}$ and $\frac{n_1}{(d_1,n_1)}\mid \frac{rq_1}{(d_1,n_1)}$, we have
  $\frac{d_1}{(d_1,n_1)}\mid \frac{rq_1}{n_1}$. Also $\pm_1 m \equiv n_1\alpha_1   \bmod d_1$ has solutions only if $(d_1,n_1)\mid m$. Hence we get
  \[
    \mathfrak{C}_1(n)  \ll \frac{rq_1}{n_1} \sum_{d_1\mid q_1}
  \sum_{d_1'\mid q_1} d_1' (d_1,n_1) \delta_{(d_1,n_1)\mid m}.
  \]
  Similarly by considering $\alpha_1$-sum first we have
  \[
    \mathfrak{C}_1(n)  \ll \frac{rq_1}{n_1} \sum_{d_1\mid q_1}
  \sum_{d_1'\mid q_1} d_1 (d_1',n_1) \delta_{(d_1',n_1)\mid m'}.
  \]
  This completes the proof of the lemma.
\end{proof}

We will also need bounds for $\mathfrak{I}(n)$. In Lemma \ref{lem:I<<} below we give bounds when $T^\varepsilon \ll \frac{NX}{PQ}\ll |T'|^{1-\varepsilon}$. In this case, in order to get a better bound for counting, we also need to find a condition for $m$ and $m'$ such that $\mathfrak{I}(0)$ is not negligibly small.
In Lemma \ref{lem:I>>} below we consider the case $ \frac{NX}{PQ}\gg |T'|^{1-\varepsilon}$. In this case, we only give  relatively easy bounds based on $L^2$-norm bounds for $\mathcal{I}$, which is good enough to prove our uniform bounds.

\begin{lemma}\label{lem:I<<}
  Assume $T^\varepsilon \ll \frac{NX}{PQ}\ll |T'|^{1-\varepsilon}$.  Then we have
  \begin{itemize}
    \item[(i)] For any $n\in \mathbb{Z}$, we have
       \[
          \mathfrak{I}(n)
          \ll N_2;
       \]
    \item[(ii)] If $n\gg \frac{P Q^2 n_1}{q_1 NX^2} T^\varepsilon$, then we have $\mathfrak{I}(n) \ll n^{-6} T^{-A}$;
    \item[(iii)] If $ \frac{N n_1}{q_1 P  |T'|^2}  T^\varepsilon + \frac{P^2 Q^3 n_1}{q_1 N^2 X^3} N^\varepsilon  \ll |n|\ll \frac{P Q^2 n_1}{q_1 NX^2} T^\varepsilon $, then
       \[
          \mathfrak{I}(n)
          \ll   N_2 \left( \frac{|n|N_2}{rq_1 q_2q_2'/n_1} \right)^{-1/2};
       \]
    \item [(iv)] If $q=q'$, then we have $\mathfrak{I}(0)
          \ll T^{-A}$ unless $m-m' \ll  M \left(\frac{PQ}{NX} +\left(\frac{NX}{PQ}\right)^2 |T'|^{-2} \right) T^{\varepsilon} $.
  \end{itemize}
\end{lemma}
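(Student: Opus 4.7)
My plan is to analyze $\mathfrak{I}(n)$ via its total phase function: after the change of variables $u = N_2\xi$ in \eqref{eqn:I(n)}, the phase of the integrand is
\[
\Phi(\xi) \;=\; \phi_{q}(\xi,m) \;-\; \phi_{q'}(\xi,m') \;-\; \frac{nN_2\xi}{rq_1q_2q_2'/n_1},
\]
where, by \eqref{eqn:I-mathcal1}, $\phi_q(\xi,m) = \pm \frac{B(\xi)}{2\pi}\sum_{\ell=0}^{L} Q_\ell^\pm(B(\xi)/T, B(\xi)/T')\,\xi_0(m,q)^{\ell+1}$, with $B(\xi) = B_0\xi^{1/3}$, $B_0 \asymp NX/PQ$, and $\xi_0(m,q) \asymp 1$. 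To leading order, $|\partial_\xi \phi_q(\xi,m)| \asymp NX/PQ$ is controlled by the $\ell = 0$ term $3B\xi_0/(2\pi)$, and the higher $\ell$-terms are geometrically smaller by factors of $NX/(PQ|T'|) \ll 1$; the Poisson linear term contributes a $\xi$-derivative of size $|n|N_2n_1q_1/(rP^2) \asymp |n|N^2X^3q_1/(n_1P^2Q^3)$. Part (i) will follow immediately from $|\mathcal{I}| = 1$ and the compactness of $\supp W$.

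For (ii) and (iii), I will compare the two contributions to $\Phi'(\xi)$. When $|n| \gg PQ^2n_1/(q_1NX^2)\,T^\varepsilon$, the Poisson term dominates and $|\Phi'(\xi)| \gg (NX/PQ)\,T^\varepsilon$ throughout $\supp W$, so Lemma \ref{lemma:repeated_integration_by_parts} will give (ii). In the balanced range of (iii), I expect a unique critical point $\xi_c \asymp 1$ arising from the cancellation between the Poisson term and the leading cubic-root oscillation; the second derivative at $\xi_c$ has size $\asymp |n|N_2/(rq_1q_2q_2'/n_1)$, and Lemma \ref{lemma:stationary_phase} will yield the claimed $(|n|N_2/(rq_1q_2q_2'/n_1))^{-1/2}$ saving. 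The two thresholds in the lower bound on $|n|$ in (iii) are dictated, respectively, by requiring $\xi_c \in \supp W$ (the $P^2Q^3n_1/(q_1N^2X^3)$ term) and by requiring that the $\ell \geq 1$ corrections in $\Phi$ not disturb the stationary phase analysis (the $Nn_1/(q_1P|T'|^2)$ term, which is precisely the scale at which the $\ell = 1$ correction becomes comparable to the Poisson linear term).

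For (iv), with $q = q'$ the factor $B(\xi)$ is shared between the two copies of $\phi_q$, so the phase becomes
\[
\Phi(\xi) \;=\; \pm \frac{B(\xi)}{2\pi} \sum_{\ell=0}^{L} Q_\ell^\pm(B/T, B/T')\bigl(\xi_0(m,q)^{\ell+1} - \xi_0(m',q)^{\ell+1}\bigr),
\]
and the Taylor expansion $\xi_0(m)^{\ell+1} - \xi_0(m')^{\ell+1} \asymp (\ell+1)(m-m')/M$ shows that the $\ell$-th term contributes $\asymp (NX/PQ)^{\ell+1}|T'|^{-\ell}(m-m')/M$ to $|\Phi'(\xi)|$. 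The leading $\ell=0$ term alone forces $|m-m'| \ll MPQ/(NX)\,T^\varepsilon$ via Lemma \ref{lemma:repeated_integration_by_parts}; however, when this leading derivative is partially compensated by the $\ell=1$ correction (whose coefficient $Q_1^\pm = \mp \frac{1}{2}(x+y)$ is explicit), the effective threshold relaxes to $M(NX/PQ)^2|T'|^{-2}\,T^\varepsilon$, giving the second term in the stated bound. The hard part will be (iv): tracking these partial cancellations carefully and confirming that the geometric decay $Q_\ell^\pm(B/T,B/T') \ll (B/|T'|)^\ell$ prevents still higher $\ell$-terms from further enlarging the admissible range of $m-m'$.
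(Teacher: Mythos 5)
Your overall approach---make the phase explicit and apply integration by parts / stationary phase---matches the paper's, and your treatment of parts (i), (ii), and the stationary-phase core of (iii) is in the right direction.

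The mechanism you propose for part (iv), however, does not work. You claim the second threshold $M(NX/PQ)^2|T'|^{-2}T^\varepsilon$ comes from partial compensation between the $\ell=0$ and $\ell=1$ terms of the phase derivative. But your own computation shows the $\ell$-th term contributes $\asymp (NX/PQ)^{\ell+1}|T'|^{-\ell}\,|m-m'|/M$: every term carries the same factor $|m-m'|/M$, and the $\ell=1$ term is smaller than the $\ell=0$ term by a fixed factor $(NX/PQ)/|T'|\ll T^{-\varepsilon}$ uniformly on $\supp W$. A term that is uniformly smaller by a factor $o(1)$ cannot partially cancel the main term down to a lower order, whatever its $\xi$-dependence; so the admissible range of $m-m'$ is not enlarged by this route. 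If you carried your geometric-decay observation through consistently, you would in fact reach the \emph{stronger} conclusion $|m-m'|\ll M(PQ/NX)\,T^\varepsilon$, which implies the lemma but is not how the paper argues. The paper's actual source of the second term is that it carries over the crude bound $O\bigl((NX/PQ)^3/|T'|^2\bigr)$ on the $\ell\geq 2$ contributions to $h'(\xi)$ from part (iii), where $q\neq q'$ in general and that size is genuinely realized, into the $q=q'$ setting of (iv), accepting the resulting slack rather than re-estimating. Similarly, in (iii) your claim that $n\gg Nn_1/(q_1P|T'|^2)\,T^\varepsilon$ is ``precisely the scale at which the $\ell=1$ correction becomes comparable to the Poisson linear term'' is off: the $\ell=1$ contribution to $h'$ scales as $(NX/PQ)|T'|^{-1}\,|B\xi_0-B'\xi_0'|$ and is therefore always subordinate to the $\ell=0$ term $|B\xi_0-B'\xi_0'|$; the threshold instead marks where the Poisson term $\frac{|n|N_2}{rq_1q_2q_2'/n_1}$ first exceeds the $O\bigl((NX/PQ)^3/|T'|^2\bigr)$ error from the $\ell\geq 2$ truncation.
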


\begin{proof}
  (i)
  By \eqref{eqn:I-mathcal1} and \eqref{eqn:I(n)}, we have
  \begin{multline*}
  \mathfrak{I}(n)
   = N_2 \int_{\mathbb{R}}   W\left(\xi\right)
     e\bigg(
      \pm \frac{B \xi^{1/3}}{2\pi} \sum_{0\leq \ell \leq L} Q_\ell^\pm \left(\frac{B\xi^{1/3}}{T},\frac{B\xi^{1/3}}{T'} \right) \xi_0^{\ell+1}
   \\
      \mp \frac{B'\xi^{1/3}}{2\pi} \sum_{0\leq \ell \leq L} Q_\ell^\pm \left(\frac{B'\xi^{1/3}}{T},\frac{B'\xi^{1/3}}{T'} \right) \xi_0'^{\ell+1} - \frac{nN_2 \xi}{rq_1 q_2q_2'/n_1} \bigg) \dd \xi  ,
  \end{multline*}
  where
  $B=\frac{ N^{1/3} ( n_1^2N_2)^{1/3} }{ r^{1/3} q} $,
  $\xi_0 = \left(\frac{2\pi q^2 T|T'|}{mN}\right)^{1/3}$,
  $B'=\frac{ N^{1/3} ( n_1^2N_2)^{1/3} }{ r^{1/3} q'} $,
  $\xi_0' = \left(\frac{2\pi q'^2 T|T'|}{m'N}\right)^{1/3}$.
  This gives $\mathfrak{I}(n)\ll N_2$, and hence proves (i).

  (ii)
  Let $h(\xi)$ be the phase function above.
  Recall that $B\asymp B'\asymp \frac{NX}{PQ}$ and $\xi_0\asymp \xi_0'\asymp 1$.
  If $\frac{nN_2}{rq_1 q_2q_2'/n_1} \gg \frac{NX}{PQ} T^{\varepsilon}$, then we have $h'(\xi) \gg \frac{|n|N_2}{rq_1 q_2q_2'/n_1}$ and $h^{(j)}(\xi) \ll \frac{NX}{PQ}$ for $j\geq2$.
  By Lemma \ref{lemma:repeated_integration_by_parts} 
   with
  \[
    \textrm{$X=V=1$,  $Y=\frac{NX}{PQ}$, $Q=1$, and $R= \frac{|n| N_2}{rq_1 q_2q_2'/n_1}$,}
  \]
  we have
  $\mathfrak{I}(n) \ll n^{-6} T^{-A}$ if $\frac{nN_2}{rq_1 q_2q_2'/n_1} \gg \frac{NX}{PQ}T^{\varepsilon}$. Since $q=q_1q_2\sim P$ and $N_2=\frac{r  N^{2}X^3} {n_1^2 Q^3} $, this is equivalent to $n\gg \frac{P Q^2 n_1}{q_1 NX^2} T^\varepsilon$.

  (iii)
  Note that
  \begin{equation*}
    h'(\xi) = \pm \frac{1}{2\pi } \left( B\xi_0 - B'\xi_0' \right) \xi^{-2/3}
    - \frac{\left(\frac{1}{T}+\frac{1}{T'}\right)}{6\pi }
       \left( B^2\xi_0^2 - B'^2\xi_0'^2 \right) \xi^{1/3}  - \frac{nN_2 }{rq_1 q_2q_2'/n_1}+ O\left(\frac{(\frac{NX}{PQ})^3}{|T'^2|}\right).
  \end{equation*}
  If $(1+(\frac{NX}{PQ})^3/|T'|^2)T^{\varepsilon}  \ll \frac{nN_2}{rq_1 q_2q_2'/n_1} \ll \frac{NX}{PQ}T^{\varepsilon}$, i.e.
  $ \frac{N n_1}{q_1 P  |T'|^2}  T^\varepsilon + \frac{P^2 Q^3 n_1}{q_1 N^2 X^3} N^\varepsilon  \ll |n|\ll \frac{P Q^2 n_1}{q_1 NX^2} T^\varepsilon $,
  then we have
  $h'(\xi) \gg |B\xi_0 - B'\xi_0'| + |\frac{nN_2}{rq_1 q_2q_2'/n_1}|$
  unless
  $|B\xi_0 - B'\xi_0'| \asymp |\frac{nN_2}{rq_1 q_2q_2'/n_1}|$
  and
  $h^{(j)}(\xi) \ll |B\xi_0 - B'\xi_0'| + (\frac{NX}{PQ})^3/|T'|^2$ for $j\geq2$.
  By Lemma \ref{lemma:repeated_integration_by_parts} 
   with
  \[
    \textrm{$X=V=1$,  $Y=|B\xi_0 - B'\xi_0'| + (\frac{NX}{PQ})^3/|T'|^2$, $Q=1$, and $R= |B\xi_0 - B'\xi_0'| + |\frac{nN_2}{rq_1 q_2q_2'/n_1}|$,}
  \]
  we have
  $\mathfrak{I}(n) \ll T^{-A}$ unless $|B\xi_0 - B'\xi_0'| \asymp |\frac{nN_2}{rq_1 q_2q_2'/n_1}|$, in which case we have
  $h^{(j)}(\xi) \asymp |\frac{nN_2}{rq_1 q_2q_2'/n_1}| $ for $j\geq2$. Hence by
  Lemma \ref{lemma:stationary_phase} with
  \[
    \textrm{$X=1$, $t_1=\xi$,  $X_1=1$, and
    $Y=\frac{|n|N_2}{rq_1 q_2q_2'/n_1}$,}
  \]
  we get
  $\mathfrak{I}(n)
          \ll   N_2 \left( \frac{|n|N_2}{rq_1 q_2q_2'/n_1} \right)^{-1/2}$.

  (iv) If $q=q'$ then $B=B'$ and
   \begin{multline*}
   \mathfrak{I}(0)
   = N_2 \int_{\mathbb{R}}   W\left(\xi\right)
     e\bigg( \pm \frac{B\xi_0}{ 2\pi }
     \left(1- \frac{m^{1/3}}{m'^{1/3}}\right) \xi^{1/3}
      - \frac{B^2 \xi_0^2 }{ 4\pi } \left(\frac{1}{T}+\frac{1}{T'}\right)  \left(1- \frac{m^{2/3}}{m'^{2/3}}\right) \xi^{2/3}
      \\
      \pm \frac{B \xi^{1/3}}{2\pi} \sum_{2\leq \ell \leq L} Q_\ell^\pm \left(\frac{B\xi^{1/3}}{T},\frac{B\xi^{1/3}}{T'} \right) (\xi_0^{\ell+1} - \xi_0'^{\ell+1})  \bigg) \dd \xi .
  \end{multline*}
  By Lemma \ref{lemma:repeated_integration_by_parts} as above, we have $\mathfrak{I}(0) \ll T^{-A}$ unless
  \[
    B \left(1- \frac{m^{1/3}}{m'^{1/3}}\right) \ll \left(1+\left(\frac{NX}{PQ}\right)^3/|T'|^2 \right) T^{\varepsilon},
  \]
  that is,
  \[
    m-m' \ll  M \left(\frac{PQ}{NX} +\left(\frac{NX}{PQ}\right)^2 |T'|^{-2} \right) T^{\varepsilon} .
  \]
  This completes the proof of Lemma \ref{lem:I<<}.
\end{proof}

\begin{lemma}\label{lem:I>>}
  Assume $ \frac{NX}{PQ}\gg |T'|^{1-\varepsilon}$. Then we have
  \begin{itemize}
    \item[(i)] If $n\gg \frac{P Q^2 n_1}{q_1 NX^2} N^\varepsilon$, then we have $\mathfrak{I}(n) \ll n^{-6} T^{-A}$;
    \item[(ii)] For  any $n\in\mathbb{Z}$, we have
       \[
          \mathfrak{I}(n)
          \ll   N_2 T^\varepsilon.
       \]
  \end{itemize}
\end{lemma}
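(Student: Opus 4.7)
The plan is to exploit the representation \eqref{eqn:I-mathcal2} in which $\mathcal{I}^{\pm_1}$ is a $\tau$-integral supported on $|\tau|\asymp NX/PQ$ with bounded integrand. Substituting this (and its conjugate with $q',m'$) into \eqref{eqn:I(n)} and interchanging the order of integration, I would write
\begin{equation*}
  \mathfrak{I}(n) = N_2 \cdot \frac{PQ}{NX} \iint F_{m,q}(\tau)\, \overline{F_{m',q'}(\tau')}\, J(\tau-\tau')\, \dd\tau\,\dd\tau',
\end{equation*}
where $F_{m,q}(\tau)=(n_1^2N_2/(mq))^{i\tau} W_1(\cdot,\pm\tau/(NX/PQ),\cdot)w^{\pm_1}(\tau)$ is supported on $|\tau|\asymp NX/PQ$ and of size $O(1)$, and
\begin{equation*}
  J(\sigma) = \int_{\mathbb{R}} W(\xi)\, \xi^{i\sigma}\, e\!\left(-\frac{nN_2\xi}{rq_1q_2q_2'/n_1}\right)\dd\xi.
\end{equation*}

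For part (i), I would apply Lemma \ref{lemma:repeated_integration_by_parts} to $J(\sigma)$. Its phase has $\xi$-derivative $\sigma/(2\pi\xi) - nN_2/(rq_1q_2q_2'/n_1)$, the first term being $O(NX/PQ)$ since $|\sigma|\ll NX/PQ$, and higher derivatives also $O(NX/PQ)$. Using $N_2=rN^2X^3/(n_1^2Q^3)$ and $q_2q_2'\asymp P^2/q_1^2$, the hypothesis $|n|\gg PQ^2 n_1/(q_1NX^2)\cdot N^\varepsilon$ translates exactly to $|n|N_2/(rq_1q_2q_2'/n_1)\gg (NX/PQ)\cdot N^\varepsilon$. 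In this regime the phase derivative is $\asymp |n|N_2/(rq_1q_2q_2'/n_1)$ and dominates the higher derivatives, so Lemma \ref{lemma:repeated_integration_by_parts} with $X=V=Q=1$, $Y=NX/PQ$, and $R=|n|N_2/(rq_1q_2q_2'/n_1)$ makes $J(\sigma)$ arbitrarily small. After integrating against the $O(1)$ amplitudes over the $(\tau,\tau')$-region of measure $\ll(NX/PQ)^2$ and multiplying by $N_2(PQ/NX)$, this gives the desired $\mathfrak{I}(n)\ll n^{-6}T^{-A}$.

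For part (ii), the pointwise bound $|\mathcal{I}|\ll (NX/PQ)^{1/2}$ is too weak, so I would instead use Plancherel. After the substitution $\xi=e^v$, the map $v\mapsto \mathcal{I}(N_2 e^v,\ldots,q)$ equals $(PQ/NX)^{1/2}$ times the Fourier transform in $\tau$ of $F_{m,q}$. Since $\|F_{m,q}\|_{L^2(\tau)}^2\ll NX/PQ\cdot T^{\varepsilon}$, Plancherel yields $\int |\mathcal{I}(N_2 e^v,\ldots,q)|^2\,\dd v\ll T^\varepsilon$. Transferring back via $\dd\xi\asymp\dd v$ on the compact support of $W$, and applying Cauchy--Schwarz to the two factors $\mathcal{I}$ and $\overline{\mathcal{I}'}$ in \eqref{eqn:I(n)}, gives $|\mathfrak{I}(n)|\ll N_2 T^\varepsilon$ uniformly in $n$. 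The only care required throughout is to verify that the $T^\varepsilon$-inert weight $W_1$ and the bounded multiplier $w^{\pm_1}$ do not introduce further loss, which they do not beyond the absorbed $T^\varepsilon$ factor; this is the mild technical point, but there is no genuine obstacle since both repeated integration by parts and Plancherel are insensitive to bounded-derivative multipliers.
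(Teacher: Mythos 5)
Your part (i) is essentially the paper's proof: substitute the $\tau$-integral representation \eqref{eqn:I-mathcal2}, interchange the order of integration, and apply Lemma \ref{lemma:repeated_integration_by_parts} in the $\xi$-variable; the translation of $|n|\gg \frac{PQ^2n_1}{q_1NX^2}N^\varepsilon$ into $\frac{|n|N_2}{rq_1q_2q_2'/n_1}\gg \frac{NX}{PQ}N^\varepsilon$ is exactly the step used in the paper, and the $W_1$ factors (which do depend on $\xi$) only enter as a $T^\varepsilon$-inert weight, so they cause no trouble for integration by parts.

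For part (ii) you take a genuinely different route. Both you and the paper begin with Cauchy--Schwarz on the $\xi$-integral to reduce to bounding $\int W(\xi)|\mathcal I(N_2\xi,\dots)|^2\,\dd\xi$, but the paper then expands this as a $(\tau,\tau')$ double integral from \eqref{eqn:I-mathcal2} and applies Lemma \ref{lemma:repeated_integration_by_parts} in $\xi$ to restrict to the near-diagonal $|\tau-\tau'|\ll T^\varepsilon$, which immediately gives the $T^\varepsilon$ volume bound. You instead invoke Plancherel after $\xi=e^v$. That is a clean idea, but as written it is not quite a Fourier transform statement: in \eqref{eqn:I-mathcal2}, with $n_2=N_2\xi$ the first argument of $W_1$ equals $\xi$ itself, so $F_{m,q}$ actually depends on $v$ as well as $\tau$, and Plancherel cannot be applied directly to $v\mapsto\mathcal I(N_2e^v,\dots)$. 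You flag this as a mild technical point, and indeed it can be fixed by first separating the $\xi$- and $\tau$-dependence of $W_1$ (e.g.\ Mellin inversion in the first slot, truncated at height $T^{2\varepsilon}$ as in \S\ref{subsec:weight_function}), after which Plancherel gives $\int|\mathcal I|^2\dd\xi\ll T^\varepsilon$. So your argument is correct once this preprocessing is spelled out; the paper's IBP route reaches the same bound without any separation of variables, since it only needs the $\xi$-weight to be $T^\varepsilon$-inert, which is why the author chose it.
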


\begin{proof}
  (i) By \eqref{eqn:I-mathcal2} and \eqref{eqn:I(n)}, we have
  \begin{multline*}
    \mathfrak{I}(n)
    = N_2 \int_{\mathbb{R}}   W\left(\xi\right)
      \left( \frac{PQ}{NX}\right)^{1/2}
    \int_{\mathbb{R}}
    \left(\frac{  n_1^2N_2\xi}{ mq} \right)^{i\tau}
    W_1\left(\frac{ n_1^2  Q^3}{r  N^{2}X^3}N_2\xi, \frac{\pm \tau} { \frac{NX}{PQ} } ,\frac{q}{P} \right)   w^{\pm_1}(\tau)  \dd \tau
     \\
    \cdot  \left( \frac{PQ}{NX}\right)^{1/2}
    \int_{\mathbb{R}} \left(\frac{  n_1^2 N_2\xi}{ m'q'} \right)^{-i\tau}
    \overline{W_1\left(\frac{ n_1^2  Q^3}{r  N^{2}X^3}N_2\xi, \frac{\pm \tau'} { \frac{NX}{PQ} } ,\frac{q'}{P} \right)   w^{\pm_1}(\tau')}
     \dd \tau'
       e\left(\frac{-nN_2 \xi}{rq_1 q_2q_2'/n_1} \right) \dd \xi .
  \end{multline*}
  Changing the order of integration, we get
  \begin{multline*}
    \mathfrak{I}(n)
    = N_2
       \frac{PQ}{NX}
    \int_{\mathbb{R}}\left(\frac{  n_1^2N_2 }{ mq} \right)^{i\tau}
     w^{\pm_1}(\tau)
    \int_{\mathbb{R}}\left(\frac{  n_1^2 N_2}{ m'q'} \right)^{-i\tau'}
    \overline{  w^{\pm_1}(\tau')}
     \\ \cdot
    \int_{\mathbb{R}}   W\left(\xi\right)  \xi^{i\tau-i\tau'}
    e\left(\frac{-nN_2 \xi}{rq_1 q_2q_2'/n_1} \right)
     W_1\left(\frac{ n_1^2  Q^3}{r  N^{2}X^3}N_2\xi, \frac{\pm \tau} { \frac{NX}{PQ} } ,\frac{q}{P} \right)
    \overline{W_1\left(\frac{ n_1^2  Q^3}{r  N^{2}X^3}N_2\xi, \frac{\pm \tau'} { \frac{NX}{PQ} } ,\frac{q'}{P} \right)  }
    \dd \xi
     \dd \tau
     \dd \tau'
      .
  \end{multline*}
  By Lemma \ref{lemma:repeated_integration_by_parts}   in the $\xi$-integral we have
  $\mathfrak{I}(n)\ll n^{-6} T^{-A}$ if $\frac{-nN_2 }{rq_1 q_2q_2'/n_1} \gg \frac{NX}{PQ}T^\varepsilon $, that is,
   \[ n \gg \frac{rP^2}{n_1q_1 } \frac {n_1^2 Q^3} {r  N^{2}X^3}\frac{NX}{PQ}T^\varepsilon
   = \frac{n_1 P Q^2}{q_1 N X^2}  T^\varepsilon . \]
  This proves (i).

  (ii) By the Cauchy inequality we have
  \begin{equation*}
    \mathfrak{I}(n)
    \ll N_2 \left( \int_{\mathbb{R}}   W\left(\xi\right)
     | \mathcal{I}(N_2\xi,n_1,r,m,q_1q_2) |^2   \dd \xi \right)^{1/2}
     \left( \int_{\mathbb{R}}   W\left(\xi\right)
     | \mathcal{I}(N_2\xi,n_1,r,m',q_1q_2') |^2   \dd \xi \right)^{1/2}   .
  \end{equation*}
  By \eqref{eqn:I-mathcal2} we have
  \begin{multline*}
     \int_{\mathbb{R}}   W\left(\xi\right)
     | \mathcal{I}(N_2\xi,n_1,r,m,q_1q_2) |^2   \dd \xi
    =    \frac{PQ}{NX}
    \int_{\mathbb{R}}\left(\frac{  n_1^2N_2 }{ mq} \right)^{i\tau}
     w^{\pm_1}(\tau)
    \int_{\mathbb{R}}\left(\frac{  n_1^2 N_2}{ mq} \right)^{-i\tau'}
    \overline{  w^{\pm_1}(\tau')}
     \\ \cdot
    \int_{\mathbb{R}}   W\left(\xi\right)  \xi^{i\tau-i\tau'}
    W_1\left(\frac{ n_1^2  Q^3}{r  N^{2}X^3}N_2\xi, \frac{\pm \tau} { \frac{NX}{PQ} } ,\frac{q}{P} \right)
    \overline{W_1\left(\frac{ n_1^2  Q^3}{r  N^{2}X^3}N_2\xi, \frac{\pm \tau'} { \frac{NX}{PQ} } ,\frac{q}{P} \right)  }
    \dd \xi
     \dd \tau
     \dd \tau'
      .
  \end{multline*}
  By Lemma \ref{lemma:repeated_integration_by_parts}  in the $\xi$-integral we have
  $\mathfrak{I}(n)\ll T^{-A}$ if $|\tau-\tau'| \gg T^{\varepsilon}$.
  So we have
  \[
     \int_{\mathbb{R}}   W\left(\xi\right)
     | \mathcal{I}(N_2\xi,n_1,r,m,q_1q_2) |^2   \dd \xi  \ll T^\varepsilon ,
  \]
  and hence $\mathfrak{I}(n) \ll N_2 T^\varepsilon$.
\end{proof}

\section{The zero frequency}\label{sec:zero-freq}

In this section, we bound the contribution from $n=0$ in \eqref{eqn:Omega=}. Denote this by $\Omega_0$.

We first deal with the case  $T^\varepsilon \ll \frac{NX}{PQ}\ll |T'|^{1-\varepsilon}$.
By Lemmas \ref{lem:C} and \ref{lem:I<<} we get
\begin{equation*}
  \Omega_0 \ll   P r \frac{r  N^{2}X^3} {n_1^2 Q^3}  \sum_{\substack{q_2\sim P/q_1 \\ (q_2,n_1r)=1}}
  \sum_{d\mid q}  \sum_{d'\mid q}  (d,d') \\
    \sum_{\substack{m\geq1 \\ m\asymp M}} |c_m|^2
    \sum_{\substack{m'\geq1 \\ (d,d')\mid (m-m') \\ m'-m \ll  M \left(\frac{PQ}{NX} +\left(\frac{NX}{PQ}\right)^2 |T'|^{-2} \right) N^{\varepsilon} }} 1.
\end{equation*}
Here we have used $|c_{m} c_{m'}| \leq |c_{m}|^2 + |c_{m'}|^2$ and without loss of generality only give details for the case with $|c_{m}|^2$.
Note that
\[
   \sum_{\substack{m'\geq1 \\ (d,d')\mid (m-m') \\ m'-m \ll M \left(\frac{PQ}{NX} +\left(\frac{NX}{PQ}\right)^2 |T'|^{-2} \right) N^{\varepsilon} }} 1
   \ll N^\varepsilon \left( \frac{1}{(d,d')}  M \left(\frac{PQ}{NX} + \left(\frac{NX}{PQ}\right)^2 |T'|^{-2} \right)    +  1 \right) .
\]
By \eqref{eqn:b&c} we have
\begin{align*}
   \Omega_0
    & \ll N^\varepsilon
     \frac{r^2 P  N^{2}X^3} {n_1^2 Q^3}  \sum_{\substack{q_2\sim P/q_1 \\ (q_2,n_1r)=1}}
  \sum_{d\mid q}  \sum_{d'\mid q}  (d,d')
   M
    \left( \frac{M}{(d,d')}  \left(\frac{PQ}{NX} + \left(\frac{NX}{PQ}\right)^2 |T'|^{-2} \right)  +  1 \right)
   \\
   & \ll N^\varepsilon \frac{1}{n_1^2 q_1} \left( \frac{r^2 P^4  N^2 T^2 X^5} {Q^5}
    +
       \frac{r^2 P^7  T^2 |T'|^2 X^2} {N Q^2}
    +
     \frac{r^2 P^5  N  T |T'| X^3} {Q^3} \right) .
\end{align*}
Here we have used $M\asymp \frac{P^2 T|T'|}{N}$.  Note that
\begin{equation}\label{eqn:sum-n1&q1}
   \sum_{n_1\ll Pr}   n_1^2
   \sum_{\frac{n_1}{(n_1,r)}\mid q_1 \mid (n_1r)^\infty} \frac{1}{n_1^2 q_1}
   \ll N^\varepsilon \sum_{n_1\ll Pr}  \frac{(n_1,r)}{n_1}
   \sum_{q_1 \mid (n_1r)^\infty} \frac{1}{q_1}
   \ll N^\varepsilon.
\end{equation}
By \eqref{eqn:S<<Omega}, the contribution from the zero frequency to $S_{r}^\pm(N,X,P) $ is bounded by
\begin{align*}
   & \ll  N^\varepsilon
       \frac {Q} {r P^2 T^{1/2} |T'|^{1/2} X}
      \left(\frac{r  N^{2}X^3} {Q^3} \right)^{1/2}
      \\
   & \hskip 60pt  \cdot
      \left( \frac{r  P^2  N  T  X^{5/2}} {Q^{5/2}}
    +
      \frac{r  P^{7/2}  T |T'| X} { N^{1/2} Q}
    +
     \frac{r  P^{5/2}  N^{1/2}  T^{1/2} |T'|^{1/2}  X^{3/2}} {Q^{3/2}}  \right)
     \\
    & \ll  N^\varepsilon  \frac{r^{1/2} N^2 T^{1/2}} { Q^{3} |T'|^{1/2} }
    +
      N^\varepsilon   r^{1/2} N^{1/2}  T^{1/2} |T'|^{1/2}
         +
     N^\varepsilon  \frac{r^{1/2} N^{3/2} }
     { Q^{3/2}}.
\end{align*}
Here we have used $X\ll T^\varepsilon$ and $P\leq Q$.  By $Q=\frac{ N^{1/2}}{K^{1/2}}$ and $N\ll \frac{T^{3/2+\varepsilon} |T'|^{3/2}}{r^2}$, the above is bounded by
\begin{equation} \label{eqn:zero-freq-1}
     \ll  N^{1/2+\varepsilon}  \frac{r^{1/2}  K^{3/2} T^{1/2}} {   |T'|^{1/2} }
    +  N^{1/2+\varepsilon}   r^{1/2}  T^{1/2} |T'|^{1/2}
         +  N^{1/2+\varepsilon}  T^{3/8} |T'|^{3/8}  K^{3/4}.
\end{equation}

\medskip

Now we treat the case  $\frac{NX}{PQ}\gg |T'|^{1-\varepsilon}$.
By Lemmas \ref{lem:C} and \ref{lem:I>>} we get
\begin{equation*}
  \Omega_0 \ll   P r \frac{r  N^{2}X^3} {n_1^2 Q^3}  \sum_{\substack{q_2\sim P/q_1 \\ (q_2,n_1r)=1}}
  \sum_{d\mid q}  \sum_{d'\mid q}  (d,d') \\
    \sum_{\substack{m\asymp M}} |c_m|^2
    \sum_{\substack{m' \asymp M \\ (d,d')\mid (m-m') }} 1.
\end{equation*}
Note that
\[
   \sum_{\substack{m' \asymp M \\ (d,d')\mid (m-m') }} 1
   \ll   \frac{1}{(d,d')} M
    +  1   .
\]
By \eqref{eqn:b&c} we have
\begin{align*}
   \Omega_0
   & \ll N^\varepsilon
     \frac{r^2 P  N^{2}X^3} {n_1^2 Q^3}
     \sum_{\substack{q_2\sim P/q_1 \\ (q_2,n_1r)=1}}
  \sum_{d\mid q}  \sum_{d'\mid q}  (d,d')
   M
    \left( \frac{1}{(d,d')} M
     +  1 \right)
    \\
    & \ll
     \frac{ N^\varepsilon } {n_1^2 q_1 }
    \left( \frac{r^2 P^2  N^{2}X^3 M^2 } { Q^3}
    + \frac{r^2 P^3  N^{2}X^3 M} {Q^3} \right) .
\end{align*}
By \eqref{eqn:sum-n1&q1} and \eqref{eqn:S<<Omega}, the contribution from the zero frequency to $S_{r}^\pm(N,X,P,M) $ is bounded by
\begin{align*} 
  & \ll  N^\varepsilon \frac{  Q }{r N^{1/2} P X  M^{1/2}}
      \left(\frac{r^{1/2}  N X^{3/2}} {Q^{3/2}} \right)
      \left( \frac{r P N X^{3/2} M  } { Q^{3/2}}
             + \frac{r  P^{3/2}  N X^{3/2} M^{1/2}} {Q^{3/2}} \right)
     \\
   &  \ll  N^\varepsilon \frac{ r^{1/2}  N   PT}{   Q^{2} }
      + N^\varepsilon  \frac{r^{1/2} N^2} { Q^{3}  }
    +  N^\varepsilon \frac{ r^{1/2}  N^{3/2} P^{1/2}    }{   Q^{2} }.
\end{align*}
Here we have used $X\ll T^\varepsilon$.
Note that by the assumption $\frac{NX}{PQ}\gg |T'|^{1-\varepsilon}$, we have $P\ll \frac{N}{Q |T'|^{1-\varepsilon}}$.
Together with $Q=\frac{ N^{1/2}}{K^{1/2}}$ and $N\ll \frac{T^{3/2+\varepsilon} |T'|^{3/2}}{r^2}$, the above is bounded by
\begin{equation}\label{eqn:zero-freq-2}
      \ll
      N^\varepsilon \frac{ r^{1/2}  N^2 T}{   Q^{3} |T'| }
    + N^\varepsilon  \frac{r^{1/2} N^{3/2} }
     { Q^{3/2}}
     \ll
      N^{1/2+\varepsilon}  \frac{ r^{1/2}   K^{3/2} T}{   |T'| }
    + N^{1/2+\varepsilon}  T^{3/8} |T'|^{3/8}  K^{3/4}.
\end{equation} 

\section{The non-zero frequencies, I}\label{sec:non-zero-freq-I}

Denote the contribution from the non-zero frequencies in \eqref{eqn:Omega=} by $\Omega_{\neq}$. Our method to bound $\Omega_{\neq}$ is similar to  \cite{HuangXu}.
In this section, we deal with the case  $T^\varepsilon \ll \frac{NX}{PQ}\ll |T'|^{1-\varepsilon}$.
By Lemmas \ref{lem:C} and \ref{lem:I<<} we get
\begin{equation}\label{eqn:Omega<<12}
  \Omega_{\neq} \ll  N^\varepsilon \sup_{1\ll N_*\ll \frac{P Q^2 n_1}{q_1 NX^2} N^\varepsilon} \Omega(N_*),
\end{equation}
where
\begin{multline*}
  \Omega(N_*)
   = H(N_*)\cdot N_2 \sum_{\substack{q_2\sim P/q_1 \\ (q_2,n_1r)=1}}
    \sum_{\substack{m\geq1 \\ m\asymp \frac{P^2 T  |T'|}{N}}}
  \sum_{\substack{q_2'\sim P/q_1 \\ (q_2',n_1r)=1}}
    \sum_{\substack{m'\geq1 \\ m'\asymp \frac{P^2 T |T'|}{N}}}|c_{m'}|^2 \;
    \frac{rq_1}{n_1} \sum_{\substack{d_1\mid q_1 \\ (d_1,n_1)\mid m}}
  \sum_{d_1'\mid q_1}   d_1' (d_1,n_1)  \\
  \cdot
     \mathop{\sum\sum}_{\substack{d_2 \mid (q_2, \pm n_1 q_2'\pm_1 mn) \\
    d_2' \mid (q_2', \mp n_1 q_2 \pm_1 m'n)}} d_2d_2'
    \min\left\{\frac{q_2}{[q_2/(q_2,q_2'),d_2]},\frac{q_2'}{[q_2'/(q_2,q_2'),d_2']} \right\}
     \sum_{\substack{n\asymp N_* \\ (q_2,q_2')\mid n}}  1 ,
\end{multline*}
with
\begin{equation}\label{eqn:H}
  H(N_*)
  := \left\{\begin{array}{ll}
         \left( \frac{N_* N_2 n_1 q_1}{r P^2 }\right)^{-1/2} , & \textrm{if   $ \frac{N n_1}{q_1 P  |T'|^2}  N^\varepsilon + \frac{P^2 Q^3 n_1}{q_1 N^2 X^3} N^\varepsilon  \ll  N_* \ll \frac{P Q^2 n_1}{q_1 NX^2} N^\varepsilon$,} \\
         1,    & \textrm{if   $1\ll N_* \ll\frac{N n_1}{q_1 P  |T'|^2}  N^\varepsilon + \frac{P^2 Q^3 n_1}{q_1 N^2 X^3} N^\varepsilon$}.
            \end{array} \right.
\end{equation}
Here we have used $|c_{m} c_{m'}| \leq |c_{m}|^2 + |c_{m'}|^2$ and without loss of generality only give details for the case with $|c_{m'}|^2$.

Changing the order of summations we have
\begin{multline*}
  \Omega(N_*)
  = H(N_*)\cdot N_2  \sum_{\substack{q_2\sim P/q_1 \\ (q_2,n_1r)=1}}
  \sum_{\substack{q_2'\sim P/q_1 \\ (q_2',n_1r)=1}}
     \frac{rq_1}{n_1} \sum_{d_1\mid q_1}
  \sum_{d_1'\mid q_1}   d_1' (d_1,n_1)
      \sum_{d_2 \mid  q_2} \sum_{d_2' \mid q_2'} d_2d_2'
    \\
  \cdot
    \min\left\{\frac{q_2}{[q_2/(q_2,q_2'),d_2]},\frac{q_2'}{[q_2'/(q_2,q_2'),d_2']} \right\}
     \sum_{\substack{n\asymp N_*  \\ (q_2,q_2')\mid n}}
    \sum_{\substack{m'\asymp \frac{P^2 T  |T'|}{N} \\ \mp n_1 q_2 \pm_1 m'n \equiv 0 \bmod d_2'}} |c_{m'}|^2
    \sum_{\substack{ m\asymp \frac{P^2 T  |T'|}{N} \\ \pm n_1 q_2'\pm_1 mn \equiv 0 \bmod d_2 \\
    (d_1,n_1)\mid m}} 1.
\end{multline*}
Writing $q_3=(q_2,q_2')$, rewriting $q_2$ as $q_2q_3$ and $q_2'$ and $q_2'q_3$, and changing the order of summations, we get
\begin{multline*}
  \Omega(N_*)
   \ll H(N_*)\cdot N_2   \frac{rq_1}{n_1} \sum_{d_1\mid q_1}
  \sum_{d_1'\mid q_1}   d_1' (d_1,n_1)  \sum_{\substack{q_3\ll P/q_1 \\ (q_3,n_1r)=1}} q_3
   \sum_{\substack{q_2\sim P/q_1q_3 \\ (q_2,n_1r)=1}}
  \sum_{\substack{q_2'\sim P/q_1q_3 \\ (q_2',n_1r)=1 \\ (q_2',q_2)=1}}
     \\
  \cdot  \sum_{d_2 \mid  q_2q_3} \sum_{d_2' \mid q_2'q_3} d_2d_2' \min\left\{\frac{q_2}{[q_2,d_2]},\frac{q_2'}{[q_2',d_2']} \right\}
  \sum_{\substack{n\asymp N_*  \\ q_3\mid n}}
   \\
  \cdot
    \sum_{\substack{m'\asymp \frac{P^2 T |T'|}{N} \\ \mp n_1 q_2q_3 \pm_1 m'n \equiv 0 \bmod d_2'}} |c_{m'}|^2
    \sum_{\substack{ m\asymp \frac{P^2 T |T'|}{N (d_1,n_1)} \\ \pm n_1 q_2'q_3\pm_1 (d_1,n_1)mn \equiv 0 \bmod d_2 }} 1.
\end{multline*}
Note that $((d_1,n_1),d_2)=1$. From the congruence condition $\pm n_1 q_2'q_3\pm_1 (d_1,n_1) mn \equiv 0 \bmod d_2$, we know $m$ is uniquely determined modulo $d_2/(d_2,n)$. So
\begin{equation}\label{eqn:sum-m-Omega}
   \sum_{\substack{ m\asymp \frac{P^2 T |T'|}{N (d_1,n_1)} \\ \pm n_1 q_2'q_3\pm_1 (d_1,n_1)mn \equiv 0 \bmod d_2 }} 1
   \ll \left(\frac{P^2 T |T'|}{N (d_1,n_1)}\frac{(d_2,n)}{d_2} + 1\right) \delta_{(d_2,n)\mid n_1q_2'q_3}.
\end{equation}
Note that $(d_2,n_1)=1$  and hence $(d_2,n)\mid q_2'q_3$.
Now rewrite $d_2$ as $d_2d_3$ with $d_2\mid q_2$ and $d_3\mid q_3$ and similarly for $d_2'$ and $d_3'$.
Since $d_3\mid q_3 \mid n$, we have $(d_2d_3,n) = d_3 (d_2,n/d_3)\mid q_2'q_3$ and therefore $(d_2,n/d_3)\mid q_2'q_3/d_3$. Then $(q_2',q_2)=1$ and $d_2\mid q_2$ imply that $((d_2,n/d_3),q_2')=1$ and hence
\begin{equation*}
  \Big(d_2,\frac{n}{d_3}\Big)  \mid \frac{q_3}{d_3}   .
\end{equation*}
We arrive at
\begin{multline*}
  \Omega(N_*)
  \ll H(N_*)\cdot N_2   \frac{rq_1}{n_1} \sum_{d_1\mid q_1}
  \sum_{d_1'\mid q_1}   d_1' (d_1,n_1)
  \sum_{\substack{q_3\ll P/q_1 \\ (q_3,n_1r)=1}} q_3
  \sum_{d_3\mid q_3}d_3
      \sum_{d_2 \ll  P/q_1q_3}   d_2
  \sum_{\substack{q_2\sim P/q_1q_3 
  \\ d_2\mid q_2}}
    \\
  \cdot
    \sum_{d_3'\mid q_3} d_3'
    \sum_{\substack{d_2' \ll P/q_1q_3 }} d_2'
    \sum_{\substack{q_2'\sim P/q_1q_3 \\ d_2' \mid q_2'}}
   \min\left\{ \frac{(q_2/d_2,d_3)}{d_3},\frac{(q_2'/d_2',d_3')}{d_3'}\right\}
     \\
  \cdot
  \sum_{\substack{n\asymp N_*  \\ q_3\mid n \\ (d_2,\frac{n}{d_3})  \mid \frac{q_3}{d_3}}}
   \sum_{\substack{m'\asymp \frac{P^2 T |T'|}{N} \\  \mp n_1 q_2q_3 \pm_1 m'n \equiv 0 \bmod d_2'd_3'}} |c_{m'}|^2
   \left(\frac{P^2 T |T'|}{N (d_1,n_1)}\frac{(d_2,n/d_3)}{d_2} + 1\right) .
\end{multline*}
According to whether $\mp n_1 q_2 q_3 \pm_1 m'n$ is zero or not, we have
\begin{equation}\label{eqn:Omega1<<}
  \Omega(N_*) \ll \Omega_{1}+\Omega_{2},
\end{equation}
where
\begin{multline*}
  \Omega_{1}
   = H(N_*)\cdot N_2  \frac{rq_1^3}{n_1}
    \sum_{\substack{q_3\ll P/q_1 \\ (q_3,n_1r)=1}} q_3
  \sum_{d_3\mid q_3}d_3
      \sum_{d_2 \ll  P/q_1q_3}   d_2\sum_{\substack{q_2\sim P/q_1q_3   \\ d_2\mid q_2}}  \frac{(q_2/d_2,d_3)}{d_3}
    \sum_{\substack{m'\asymp \frac{P^2 T |T'|}{N} }} |c_{m'}|^2
      \\
  \cdot
  \sum_{\substack{n\asymp N_*  \\ q_3\mid n
   \\ \mp n_1 q_2q_3 \pm_1 m'n \neq 0}} \left( \frac{P^2 T |T'|}{N q_1}\frac{(d_2,n/d_3)}{d_2} + 1\right)
    \sum_{d_3'\mid q_3} d_3'
    \sum_{\substack{d_2' \ll P/q_1q_3 \\ \mp n_1 q_2q_3 \pm_1 m'n \equiv 0 \bmod d_2'd_3' }} d_2'
    \sum_{\substack{q_2'\sim P/q_1q_3 \\ d_2' \mid q_2'}}1
\end{multline*}
and
\begin{multline*}
  \Omega_{2}
   = H(N_*)\cdot N_2  \frac{rq_1^3}{n_1}
    \sum_{\substack{q_3\ll P/q_1 \\ (q_3,n_1r)=1}} q_3
     \sum_{d_3'\mid q_3} d_3'
    \sum_{\substack{d_2' \ll P/q_1q_3 }} d_2'
    \sum_{\substack{q_2'\sim P/q_1q_3 \\ d_2' \mid q_2'}}  \frac{(q_2'/d_2',d_3')}{d_3'}
    \sum_{\substack{m'\asymp \frac{P^2 T |T'|}{N} }} |c_{m'}|^2
      \\
  \cdot
  \sum_{\substack{n\asymp N_*  \\ q_3\mid n }}
  \sum_{d_3\mid q_3}d_3
  \sum_{\substack{d_2 \ll  P/q_1q_3  \\ (d_2,\frac{n}{d_3})  \mid \frac{q_3}{d_3}} }   d_2
      \left(  \frac{P^2 T |T'|}{Nq_1}\frac{(d_2,n/d_3)}{d_2} + 1\right)
  \sum_{\substack{q_2\sim P/q_1q_3 \\ d_2\mid q_2 \\ \mp n_1 q_2q_3 \pm_1 m'n =0 }}
  1.
\end{multline*}

\subsection{The non-zero cases}
If $\mp n_1 q_2 q_3 \pm_1 m'n \neq 0$ then the innermost sums over $d_3'$, $d_2'$ and $q_2'$ in $\Omega_1$ are bounded by
\begin{equation}\label{eqn:sum-d'}
  \ll \sum_{d_3'\mid q_3} d_3'
    \sum_{\substack{d_2' \ll P/q_1q_3 \\ \mp n_1 q_2q_3 \pm_1 m'n \equiv 0 \bmod d_2'd_3'}} \frac{P}{q_1q_3 }
  \ll N^\varepsilon \frac{P}{q_1}.
\end{equation}
Hence we have
\begin{multline*}
  \Omega_{1} \ll N^\varepsilon H(N_*)\cdot N_2 \frac{rq_1^3}{n_1} \frac{P}{q_1}  \sum_{\substack{q_3\ll P/q_1 \\ (q_3,n_1r)=1}} q_3
  \sum_{d_3\mid q_3}
      \sum_{d_2 \ll  P/q_1q_3}   d_2\sum_{\substack{q_2\sim P/q_1q_3 \\ (q_2,n_1r)=1 \\ d_2\mid q_2}}   (q_2/d_2,d_3)
      \\
  \cdot
      \sum_{\substack{m'\asymp \frac{P^2 T |T'|}{N} }} |c_{m'}|^2
  \sum_{\substack{n\asymp N_*  \\ q_3\mid n}} \left( \frac{P^2 T |T'|}{Nq_1}\frac{(d_2d_3,n)}{d_2d_3} + 1\right).
\end{multline*}
Note that  we have
\begin{multline}\label{eqn:sum-n}
  q_3 \sum_{\substack{ n\sim N_*  \\ q_3 \mid n}}  (d,n)
   \ll  q_3  \sum_{n\sim N_*/q_3} (d,q_3 n)
  \leq q_3  \sum_{n\sim N_*/q_3} (d,n)  (d,q_3) \\
  \leq  (d,q_3)  q_3  \sum_{d'\mid d} d' \sum_{n\sim N_*/q_3,\; d'\mid n} 1
  \ll d^{\varepsilon} (d,q_3) N_*.
\end{multline}
By \eqref{eqn:b&c},  we have
\begin{align*}
  \Omega_{1} & \ll N^\varepsilon H(N_*)\cdot N_2 N_*  \frac{rq_1^3}{n_1}
  \frac{P}{q_1}
  \frac{P^2 T |T'|}{N} \sum_{\substack{q_3\ll P/q_1 \\ (q_3,n_1r)=1}}
  \sum_{d_3\mid q_3}
  \\ & \hskip 60pt \cdot    \sum_{d_2 \ll  P/q_1q_3}   d_2
  \sum_{\substack{q_2\sim P/q_1q_3 \\ (q_2,n_1r)=1 \\ d_2\mid q_2}} (q_2/d_2,d_3) \left(\frac{P^2 T |T'|}{Nq_1}\frac{  (d_2,q_3) }{d_2} + 1\right)
  \\
  & \ll N^\varepsilon H(N_*)\cdot N_2  N_*  \frac{rq_1^3}{n_1}
  \frac{P}{q_1}   \frac{P^2 T |T'|}{N}
  \left(\frac{P^2 T |T'|}{Nq_1}\frac{P}{q_1}  + \frac{P^2}{q_1^2} \right) .
\end{align*}
Recall that  $N_2 =  \frac{r  N^{2}X^3} {n_1^2 Q^3} $. If $\frac{N n_1}{q_1 P  |T'|^2}  N^\varepsilon + \frac{P^2 Q^3 n_1}{q_1 N^2 X^3} N^\varepsilon  \ll  N_* \ll \frac{P Q^2 n_1}{q_1 NX^2} N^\varepsilon$, then by \eqref{eqn:H} we obtain
\begin{align*}
  \Omega_{1}
  &  \ll N^\varepsilon \left( \frac{r  N^{2}X^3} {n_1^2 Q^3}\right)^{1/2}  \frac{rq_1^3}{n_1} \frac{r^{1/2} P}{ q_1^{1/2}  n_1^{1/2}} \frac{P}{q_1} \left(\frac{P Q^2 n_1}{q_1 NX^2}\right)^{1/2}  \frac{P^2 T |T'|}{N}
  \left(\frac{P^2 T |T'|}{Nq_1}\frac{P}{q_1}  + \frac{P^2}{q_1^2} \right)
    \\
  & \ll  \frac{r^2  P^{15/2}   T^2  |T'|^2 X^{1/2}} { n_1^{2} q_1  Q^{1/2} N^{3/2} }
    +  \frac{r^2  P^{13/2}   T |T'| X^{1/2}}  { n_1^{2} q_1  Q^{1/2} N^{1/2} }  .
\end{align*}
By \eqref{eqn:S<<Omega} and \eqref{eqn:sum-n1&q1}, the contribution from $\Omega_{1}$ to $S_{r}^\pm(N,X,P) $ is bounded by
\begin{align*}
   & \ll  N^\varepsilon  \frac {Q} {r P^2 T^{1/2}  |T'|^{1/2} X}
      \left(\frac{r  N^{2}X^3} {Q^3} \right)^{1/2}
    \left( \frac{r  P^{15/4}   T |T'| X^{1/4}}
             {   Q^{1/4} N^{3/4} }
    +  \frac{r   P^{13/4} T^{1/2}  |T'|^{1/2}  X^{1/4}}
             {  Q^{1/4} N^{1/4} }  \right)
              \\
  & \ll  N^\varepsilon r^{1/2}  N^{1/4} Q   T^{1/2}  |T'|^{1/2}
    +  N^\varepsilon r^{1/2}  N^{3/4} Q^{1/2}.
\end{align*}
By  $Q=\frac{ N^{1/2}}{K^{1/2}}$ and $N\ll \frac{T^{3/2+\varepsilon} |T'|^{3/2}}{r^2}$, the above is bounded by
\begin{equation}\label{eqn:non-zero-freq-11}
   \ll  N^\varepsilon   N^{1/2}  \frac{ T^{7/8}  |T'|^{7/8}}{K^{1/2}}
    +  N^\varepsilon   N^{1/2} \frac{  T^{3/4}  |T'|^{3/4}}{K^{1/4}}
   .
\end{equation}

If $1\ll N_* \ll \frac{N n_1}{q_1 P  |T'|^2}  N^\varepsilon + \frac{P^2 Q^3 n_1}{q_1 N^2 X^3} N^\varepsilon $, then by \eqref{eqn:H} we obtain
\begin{align*}
  \Omega_{1}
    & \ll N^\varepsilon \frac{r  N^{2}X^3} {n_1^2 Q^3}  \frac{rq_1^3}{n_1} \frac{P}{q_1}
  \left( \frac{N n_1}{q_1 P  |T'|^2}  + \frac{P^2 Q^3 n_1}{q_1 N^2 X^3} \right)
   \frac{P^2 T|T'|}{N}
  \left(\frac{P^2 T|T'|}{Nq_1}\frac{P}{q_1}  + \frac{P^2}{q_1^2} \right)
    \\
  & \ll  N^\varepsilon \frac{r^2  N  P^5 T|T'| X^3} {n_1^2 q_1  Q^3}
  \left( \frac{T}{|T'|}
  + \frac{P^3 Q^3 T|T'|}{N^3 X^3}
  + \frac{N}{P  |T'|^2}
  + \frac{P^2 Q^3}{N^2 X^3} \right)  .
\end{align*}
By \eqref{eqn:S<<Omega} and \eqref{eqn:sum-n1&q1}, the contribution from $\Omega_{1}$ to $S_{r}^\pm(N,X,P) $ is bounded by
\begin{align*}
   & \ll  N^\varepsilon  \frac {Q} {r P^2 T^{1/2}|T'|^{1/2} X}
      \left( \frac{r  N^{2}X^3} {Q^3} \right)^{1/2}
   \frac{r   N^{1/2}  P^{5/2} T^{1/2}|T'|^{1/2}  X^{3/2}} {Q^{3/2}}
      \\
   & \hskip 60pt \cdot
  \left( \frac{T^{1/2}}{|T'|^{1/2}}  + \frac{P^{3/2} Q^{3/2} T^{1/2}|T'|^{1/2}}{N^{3/2} X^{3/2}} +
   \frac{N^{1/2}}{P^{1/2} |T'|}  + \frac{P Q^{3/2}}{N X^{3/2}} \right)
              \\
   & \ll   N^\varepsilon
       \frac{r^{1/2} N^{3/2} } {Q^{3/2}}\frac{T^{1/2}}{|T'|^{1/2}}
    +  N^\varepsilon r^{1/2} Q^{3/2}  T^{1/2}|T'|^{1/2}
    +  N^\varepsilon \frac{r^{1/2} N^{2}} {Q^{2} |T'|}
   +  N^\varepsilon r^{1/2} N^{1/2}  Q .
\end{align*}
Here we have used $X\ll T^\varepsilon$.
By  $Q=\frac{ N^{1/2}}{K^{1/2}}$ and $N\ll \frac{T^{3/2+\varepsilon} |T'|^{3/2}}{r^2}$, the above is bounded by
\begin{equation}\label{eqn:non-zero-freq-21}
    \ll
         N^{1/2+\varepsilon} \frac{K^{3/4}  T^{7/8}}{|T'|^{1/8}}
    +   N^{1/2+\varepsilon} \frac{ T^{7/8}|T'|^{7/8}}{K^{3/4}}
    +   N^{1/2+\varepsilon} \frac{T^{3/4} K} {  |T'|^{1/4}}
   +   N^{1/2+\varepsilon}  \frac{T^{3/4}|T'|^{3/4}}{K^{1/2}}.
\end{equation}


\subsection{The zero case}
If $\mp n_1 q_2 q_3 \pm_1 m'n = 0$ then we have $n\asymp \frac{n_1 q_2 q_3}{m'} \ll \frac{n_1 N}{q_1 PT|T'|}$. Since $n\asymp N_*$, we have $\Omega_2=0$ unless $N_* \ll \frac{n_1 N}{q_1 PT|T'|}$ in which case, by \eqref{eqn:H}, we have $H(N_*)=1$.
In $\Omega_2$, we should consider the sums over $d_2,\ q_2, \ q_3$ first.
Since $d_2\mid q_2$, we rewrite $q_2$ as $d_2 q_2$.
Hence
\begin{multline*}
  \Omega_{2}
   \ll  N_2   \frac{rq_1^3}{n_1} \sum_{\substack{q_3\ll P/q_1 \\ (q_3,n_1r)=1}} q_3
    \sum_{d_3'\mid q_3}
   \sum_{\substack{d_2' \ll P/q_1q_3 }} d_2'
    \sum_{\substack{q_2'\sim P/q_1q_3 \\ d_2' \mid q_2'}}  (q_2'/d_2',d_3')
    \sum_{\substack{m'\asymp \frac{P^2 T |T'|}{N} }} |c_{m'}|^2
      \\
  \cdot
  \sum_{\substack{1  \ll |n|\ll \frac{n_1 N}{q_1 PT |T'|}  \\ q_3\mid n}}
  \sum_{d_3\mid q_3}  d_3
  \sum_{\substack{d_2 \ll  P/q_1q_3  \\ (d_2,\frac{n}{d_3})  \mid \frac{q_3}{d_3} }}   d_2
  \left(\frac{P^2 T |T'|}{Nq_1}\frac{(d_2,n/d_3)}{d_2} + 1\right)
     \sum_{\substack{q_2\sim P/q_1q_3d_2 \\  \mp n_1 d_2 q_2q_3 \pm_1 m'n =0 } } 1
     .
\end{multline*}
Note that $d_2d_3\leq d_2q_3\ll P/q_1$ and the choices of $(d_2,d_3,q_2)$ are at most $N^\varepsilon$ for each fixed $m',n$ as $d_2d_3q_2 \mid d_2 q_2q_3 \mid m'n$. Note that $ (d_2,\frac{n}{d_3})  \mid \frac{q_3}{d_3}$ gives $(d_2,n/d_3) d_3 \leq q_3$.  Hence
\begin{multline*}
  \Omega_{2}
   \ll  N_2  \frac{rq_1^3}{n_1}  \sum_{\substack{q_3\ll P/q_1 \\ (q_3,n_1r)=1}} q_3
    \sum_{d_3'\mid q_3}
   \sum_{\substack{d_2' \ll P/q_1q_3 }} d_2'
    \sum_{\substack{q_2'\sim P/q_1q_3d_2' }}  (q_2',d_3')
      \\
  \cdot
    \sum_{\substack{m'\asymp \frac{P^2 T |T'|}{N} }} |c_{m'}|^2
  \sum_{\substack{1  \ll |n|\ll \frac{n_1 N}{q_1 PT |T'|}  \\ q_3\mid n}}  \left(\frac{P^2 T |T'|}{Nq_1} q_3  + \frac{P}{q_1} \right) .
\end{multline*}
By \eqref{eqn:b&c} and \eqref{eqn:sum-n}, we have
\begin{equation*}
  \Omega_{2}
  \ll
   N_2   \frac{rq_1^3}{n_1}
    \sum_{\substack{q_3\ll P/q_1 \\ (q_3,n_1r)=1}}
    \sum_{d_3'\mid q_3}
   \sum_{\substack{d_2' \ll P/q_1q_3 }} d_2'
    \sum_{\substack{q_2'\sim P/q_1q_3d_2' }}  (q_2',d_3')
  \left( \frac{n_1 N}{q_1 PT |T'|} \right)
   \left(\frac{P^2 T |T'|}{Nq_1} q_3  + \frac{P}{q_1} \right) \frac{P^2 T |T'|}{N}.
\end{equation*}
Changing the order of summations, we get
\begin{align*}
  \Omega_{2} & \ll
   N_2  \frac{rq_1^3}{n_1}
   \frac{n_1 N}{q_1 PT |T'|}  \frac{P^2 T |T'|}{N}
   \sum_{\substack{q_3\ll P/q_1 \\ (q_3,n_1r)=1}}
   \left(\frac{P^2 T |T'|}{Nq_1} q_3  + \frac{P}{q_1} \right)
    \sum_{d_3'\mid q_3}
   \sum_{\substack{d_2' \ll P/q_1q_3 }} d_2'
    \sum_{\substack{q_2'\sim P/q_1q_3d_2' }}  (q_2',d_3')
  \\
  & \ll  N^\varepsilon
   N_2   rq_1^2  P
   \sum_{\substack{q_3\ll P/q_1 \\ (q_3,n_1r)=1}}
   \left(\frac{P^2 T |T'|}{Nq_1} q_3  + \frac{P}{q_1} \right)
     \frac{P^2}{q_1^2q_3^2}
  \ll N^\varepsilon
   \frac{r^2   N^{2} P^4 X^3} {n_1^2 q_1 Q^3}
   \left(\frac{P T |T'|}{N}   + 1 \right)
     .
\end{align*}
By \eqref{eqn:S<<Omega} and \eqref{eqn:sum-n1&q1}, the contribution from $\Omega_{2}$ to $S_{r}^\pm(N,X,P) $ is bounded by
\begin{align*}
  & \ll  N^\varepsilon  \frac {Q} {r P^2 T^{1/2}|T'|^{1/2} X}
     \frac{r^{1/2}  N X^{3/2}} {Q^{3/2}}
     \frac{r   N P^2 X^{3/2}} { Q^{3/2}}
   \left(\frac{P^{1/2} T^{1/2}|T'|^{1/2}}{N^{1/2}}   + 1 \right)
    \\
  & \ll N^\varepsilon  \frac {r^{1/2} N^{3/2} } { Q^{3/2} }
    +  N^\varepsilon  \frac {r^{1/2} N^2 } { Q^{2} T^{1/2}|T'|^{1/2}}.
\end{align*}
Here we have used $P\leq Q$ and $X\ll T^\varepsilon$.
By  $Q=\frac{ N^{1/2}}{K^{1/2}}$ and  $N\ll \frac{T^{3/2+\varepsilon} |T'|^{3/2}}{r^2}$, the above is bounded by
\begin{equation}\label{eqn:non-zero-freq-22}
    \ll   N^{1/2+\varepsilon } T^{3/8}|T'|^{3/8} K^{3/4}
    +    N^{1/2+\varepsilon }  T^{1/4}|T'|^{1/4} K
   .
\end{equation}

\section{The non-zero frequencies, II} \label{sec:non-zero-freq-II}

In this section we treat the case  $\frac{NX}{PQ}\gg |T'|^{1-\varepsilon}$.
By Lemmas \ref{lem:C} and \ref{lem:I>>} we get
$\Omega_{\neq} \ll N^\varepsilon \Omega_3$, where
\begin{multline*}
  \Omega_{3}
  = \sum_{\substack{q_2\sim P/q_1 \\ (q_2,n_1r)=1}}
    \sum_{\substack{m\geq1 \\ m\asymp M}}
  \sum_{\substack{q_2'\sim P/q_1 \\ (q_2',n_1r)=1}}
    \sum_{\substack{m'\geq1 \\ m'\asymp M}}  |c_{m'}|^2 \;
     \frac{rq_1}{n_1} \sum_{d_1\mid q_1}
  \sum_{d_1'\mid q_1}   d_1' (d_1,n_1) \delta_{(d_1,n_1)\mid m}\\
  \cdot
     \mathop{\sum\sum}_{\substack{d_2 \mid (q_2, \pm n_1 q_2'\pm_1 mn) \\
    d_2' \mid (q_2', \mp n_1 q_2 \pm_1 m'n)}} d_2d_2'
   \min\left\{\frac{q_2}{[q_2/(q_2,q_2'),d_2]},\frac{q_2'}{[q_2'/(q_2,q_2'),d_2']} \right\}  \sum_{\substack{  1\ll  |n|\ll \frac{P Q^2 n_1}{q_1 NX^2} N^\varepsilon \\ (q_2,q_2')\mid n } } N_2.
\end{multline*}
Here we have used $|c_{m} c_{m'}| \leq |c_{m}|^2 + |c_{m'}|^2$ and without loss of generality only give details for the case with $|c_{m'}|^2$.
By the same arguments as in \S \ref{sec:non-zero-freq-I}, we have
\begin{equation}\label{eqn:Omega3<<}
  \Omega_{3} \ll \Omega_{31}+\Omega_{32},
\end{equation}
where
\begin{multline*}
  \Omega_{31}
   = N_2  \frac{rq_1^3}{n_1}
    \sum_{\substack{q_3\ll P/q_1 \\ (q_3,n_1r)=1}} q_3
  \sum_{d_3\mid q_3}d_3
      \sum_{d_2 \ll  P/q_1q_3}   d_2\sum_{\substack{q_2\sim P/q_1q_3 \\ (q_2,n_1r)=1 \\ d_2\mid q_2}}  \frac{(q_2/d_2,d_3)}{d_3}
    \sum_{\substack{m'\asymp M }} |c_{m'}|^2
      \\
  \cdot
  \sum_{\substack{|n|\ll \frac{P Q^2 n_1}{q_1 NX^2} N^\varepsilon    \\ q_3\mid n  \\ \mp n_1 q_2q_3 \pm_1 m'n \neq 0}}
  \left( \frac{M}{ q_1}\frac{(d_2d_3,n)}{d_2d_3} + 1\right)
    \sum_{d_3'\mid q_3} d_3'
    \sum_{\substack{d_2' \ll P/q_1q_3 \\ \mp n_1 q_2q_3 \pm_1 m'n \equiv 0 \bmod d_2'd_3' }} d_2'
    \sum_{\substack{q_2'\sim P/q_1q_3 \\ d_2' \mid q_2'}}1
\end{multline*}
and
\begin{multline*}
  \Omega_{32}
   = N_2  \frac{rq_1^3}{n_1}
    \sum_{\substack{q_3\ll P/q_1 \\ (q_3,n_1r)=1}} q_3
     \sum_{d_3'\mid q_3} d_3'
    \sum_{\substack{d_2' \ll P/q_1q_3 }} d_2'
    \sum_{\substack{q_2'\sim P/q_1q_3 \\ d_2' \mid q_2'}}  \frac{(q_2'/d_2',d_3')}{d_3'}
    \sum_{\substack{m'\asymp M }} |c_{m'}|^2
      \\
  \cdot
  \sum_{\substack{|n|\ll \frac{P Q^2 n_1}{q_1 NX^2} N^\varepsilon    \\ q_3\mid n \\  \mp n_1 q_2q_3 \pm_1 m'n =0}}
  \sum_{d_3\mid q_3}d_3
      \sum_{\substack{d_2 \ll  P/q_1q_3  \\ (d_2,\frac{n}{d_3})  \mid \frac{q_3}{d_3}  }}   d_2
  \left(  \frac{M}{ q_1}\frac{(d_2,n/d_3)}{d_2} + 1\right)
  \sum_{\substack{q_2\sim P/q_1q_3 \\ (q_2,n_1r)=1 \\ d_2\mid q_2}} 1
     .
\end{multline*}

\subsection{The non-zero cases} \label{subsec:non-zeor-non-zero-freq-II}
If $\mp n_1 q_2 q_3 \pm_1 m'n \neq 0$, then by \eqref{eqn:sum-d'} we have
\begin{multline*}
  \Omega_{31} \ll N^\varepsilon N_2 \frac{rq_1^3}{n_1} \frac{P}{q_1} \sum_{\substack{q_3\ll P/q_1 \\ (q_3,n_1r)=1}} q_3
  \sum_{d_3\mid q_3}
      \sum_{d_2 \ll  P/q_1q_3}   d_2\sum_{\substack{q_2\sim P/q_1q_3 \\ (q_2,n_1r)=1 \\ d_2\mid q_2}}  (q_2/d_2,d_3)
      \\
  \cdot
  \sum_{\substack{|n|\ll \frac{P Q^2 n_1}{q_1 NX^2} N^\varepsilon  \\ q_3\mid n}}
  \left(\frac{M}{q_1}\frac{(d_2d_3,n)}{d_2d_3} + 1\right)
    \sum_{\substack{m'\asymp M}} |c_{m'}|^2.
\end{multline*}
By \eqref{eqn:b&c} and \eqref{eqn:sum-n},  we have
\begin{align*}
  \Omega_{31} & \ll N^\varepsilon N_2   \frac{rq_1^3}{n_1}   \frac{P}{q_1}
   \frac{P Q^2 n_1}{q_1 NX^2}   M \sum_{\substack{q_3\ll P/q_1 \\ (q_3,n_1r)=1}}
  \sum_{d_3\mid q_3}
  \sum_{d_2 \ll  P/q_1q_3}   d_2
  \sum_{\substack{q_2\sim P/q_1q_3 \\ (q_2,n_1r)=1 \\ d_2\mid q_2}} (q_2/d_2,d_3) \left(\frac{M}{q_1}\frac{  (d_2,q_3) }{d_2} + 1\right)
  \\
  & \ll N^\varepsilon N_2   \frac{rq_1^3}{n_1} \frac{P}{q_1}
  \frac{P Q^2 n_1}{q_1 NX^2}   M
  \left(\frac{M}{q_1}\frac{P}{q_1}  + \frac{P^2}{q_1^2} \right) .
\end{align*}
Recall that  $N_2 =  \frac{r  N^{2}X^3} {n_1^2 Q^3} $. We obtain
\begin{equation}\label{eqn:Omega31<<}
  \Omega_{31}
    \ll N^\varepsilon \frac{r  N^{2}X^3} {n_1^2 Q^3}  \frac{rq_1^3}{n_1} \frac{P}{q_1}
  \frac{P Q^2 n_1}{q_1 NX^2}   M
  \left(\frac{M}{q_1}\frac{P}{q_1}  + \frac{P^2}{q_1^2} \right)
    \\
    \ll  N^\varepsilon \frac{r^2   N P^2   X}{n_1^2 q_1 Q   }  M
  \left( M P  +  P^2  \right) .
\end{equation}
By \eqref{eqn:S<<Omega} and \eqref{eqn:sum-n1&q1}, the contribution from $\Omega_{31}$ to $S_{r}^\pm(N,X,P,M) $ is bounded by
\begin{align*}
  & \ll N^\varepsilon \frac{ Q }{r N^{1/2} P X M^{1/2} }
      \frac{r^{1/2}   N X^{3/2}} {Q^{3/2}}
      \left( \frac{r   N^{1/2} P X^{1/2}}{Q^{1/2}} M^{1/2}
  \left( \frac{PT}{N^{1/2}}P^{1/2} + \frac{N^{1/2}X}{Q} P^{1/2}  +  P  \right)  \right)
  \\
  & \ll  N^\varepsilon \frac{ r^{1/2} N^{1/2} TP^{3/2} }{ Q }
   + N^\varepsilon \frac{ r^{1/2} N^{3/2} P^{1/2} }{ Q^2 }
   +   N^\varepsilon\frac{ r^{1/2} N }{ Q } P  .
\end{align*}
Here we have used $X\ll T^\varepsilon$ and $M\ll \frac{P^2T^2}{N}+\frac{NX^2}{Q^2}$ as in \eqref{eqn:S<<SM2}. Note that by the assumption $\frac{NX}{PQ}\gg |T'|^{1-\varepsilon}$, we have $P\ll \frac{N}{Q |T'| } T^\varepsilon$.
Together with $Q=\frac{ N^{1/2}}{K^{1/2}}$ and $N\ll \frac{T^{3/2+\varepsilon} |T'|^{3/2}}{r^2}$, the above is bounded by
\begin{align}\label{eqn:non-zero-freq-3}
   & \ll N^\varepsilon \frac{ r^{1/2} N^{2} T }{ Q^{5/2} |T'|^{3/2} }
   + N^\varepsilon \frac{ r^{1/2} N^{3/2}  }{ Q^{3/2} }  +
    N^\varepsilon\frac{ r^{1/2} N^2  }{ Q^2 |T'| } \nonumber
   \\
   & \ll  N^{1/2+\varepsilon} \frac{ T^{11/8} K^{5/4} }{|T'|^{9/8}}
    +     N^{1/2+\varepsilon} T^{3/8}|T'|^{3/8} K^{3/4}
    +     N^{1/2+\varepsilon} \frac{ T^{3/4} K}{|T'|^{1/4}} .
\end{align}

\subsection{The zero case}\label{subsec:zeor-non-zero-freq-II}
In $\Omega_{32}$, we should consider the sums over $d_2,\ d_3, \ q_2$ first.
Since $d_2\mid q_2$, we rewrite $q_2$ as $d_2 q_2$.  Hence we have
\begin{multline*}
  \Omega_{32}
   \ll  N_2   \frac{rq_1^3}{n_1} \sum_{\substack{q_3\ll P/q_1 \\ (q_3,n_1r)=1}} q_3
    \sum_{d_3'\mid q_3}
   \sum_{\substack{d_2' \ll P/q_1q_3 }} d_2'
    \sum_{\substack{q_2'\sim P/q_1q_3 \\ d_2' \mid q_2'}}  (q_2'/d_2',d_3')
    \sum_{\substack{m'\asymp M }} |c_{m'}|^2
      \\
  \cdot
  \sum_{\substack{1  \ll |n|\ll \frac{P Q^2 n_1}{q_1 NX^2} N^\varepsilon   \\ q_3\mid n}}
   \sum_{d_3\mid q_3}  \sum_{d_2 \ll  P/q_1q_3}
  \left(\frac{M}{q_1} \Big(d_2,\frac{n}{d_3}\Big)d_3  +  d_3 d_2 \right)
     \sum_{\substack{q_2\sim P/q_1q_3d_2 \\  \mp n_1 d_2 q_2q_3 \pm_1 m'n =0 } } 1
     .
\end{multline*}
Note that $d_2d_3\leq d_2q_3\ll P/q_1$ and the choices of $(d_2,d_3,q_2)$ are at most $N^\varepsilon$ for each fixed $m',n$ as $d_2 d_3 q_2 \mid d_2 q_2q_3 \mid m'n$. Note that $(d_2,\frac{n}{d_3})  \mid \frac{q_3}{d_3} $ implies $(d_2,n/d_3) d_3 \leq q_3$.
Hence
\begin{equation*}
  \Omega_{32}
   \ll  N_2  \frac{rq_1^3}{n_1}  \sum_{\substack{q_3\ll P/q_1 \\ (q_3,n_1r)=1}}
    \sum_{d_3'\mid q_3}
   \sum_{\substack{d_2' \ll P/q_1q_3 }} d_2'
    \sum_{\substack{q_2'\sim P/q_1q_3d_2' }}  (q_2',d_3')
  q_3
  \sum_{\substack{1  \ll |n|\ll\frac{P Q^2 n_1}{q_1 NX^2} N^\varepsilon \\ q_3\mid n}}  \left(\frac{M}{q_1} q_3  + \frac{P}{q_1} \right)
    \sum_{\substack{m'\asymp M}} |c_{m'}|^2 .
\end{equation*}
By \eqref{eqn:b&c}, we have
\begin{align*}
  \Omega_{32}
  & \ll N^\varepsilon
   N_2   \frac{rq_1^3}{n_1}
    \sum_{\substack{q_3\ll P/q_1 \\ (q_3,n_1r)=1}}
    \sum_{d_3'\mid q_3}
   \sum_{\substack{d_2' \ll P/q_1q_3 }} d_2'
    \sum_{\substack{q_2'\sim P/q_1q_3d_2' }}  (q_2',d_3')
  \frac{P Q^2 n_1}{q_1 NX^2}
   \left(\frac{M}{q_1} q_3  + \frac{P}{q_1} \right)M
      \\
  &
  \ll N^\varepsilon
   \frac{r^2  N P^3 X } {n_1^2 q_1 Q}   M
   \left( M  + P \right)
     .
\end{align*}
Note that this bound is the same as the bound for $\Omega_{31}$ in \eqref{eqn:Omega31<<}.
Hence we get the same bound for the contribution from $\Omega_{32}$ to $S_{r}^\pm(N,X,P,M) $.

\section{Proof of Theorem \ref{thm:main}}\label{sec:Proof}

By \eqref{eqn:zero-freq-1} and \eqref{eqn:zero-freq-2} we have the contribution from the zero frequency to $S_{r}^\pm(N,X,P) $ is bounded by
\begin{equation*}
   \ll N^{1/2+\varepsilon}  \left( \frac{ r^{1/2}  K^{3/2} T}{   |T'| }
   + r^{1/2}    T^{1/2} |T'|^{1/2}
         +
      T^{3/8} |T'|^{3/8}  K^{3/4}   \right).
\end{equation*}
By \eqref{eqn:non-zero-freq-11}, \eqref{eqn:non-zero-freq-21}, \eqref{eqn:non-zero-freq-22}, and \eqref{eqn:non-zero-freq-3}, we have the contribution from the non-zero frequencies to $S_{r}^\pm(N,X,P) $ is bounded by
\begin{multline*}
  \ll
  N^{1/2+\varepsilon}
  \bigg(  \frac{ T^{7/8}  |T'|^{7/8}}{K^{1/2}}
    +    \frac{  T^{3/4}  |T'|^{3/4}}{K^{1/4}} + T^{5/8}|T'|^{5/8}
    +   T^{1/2}|T'|^{1/2} K^{1/4}
    \\
   +  \frac{K^{3/4}  T^{7/8}}{|T'|^{1/8}}
    +  T^{3/8}|T'|^{3/8} K^{3/4}
    +  \frac{T^{3/4} K} {  |T'|^{1/4}}
    +    T^{1/4}|T'|^{1/4} K
    + \frac{ T^{11/8} K^{5/4} }{|T'|^{9/8}}
   \bigg).
\end{multline*}
Assume $K\leq T^{1/2}|T'|^{1/2}$. Note that $T'\ll T$.  Hence we have
\begin{equation*}
   S_{r}^\pm(N,X,P) \ll
  N^{1/2+\varepsilon}
  \bigg(  \frac{ R^{1/2}  K^{3/2} T}{   |T'| }
   + R^{1/2}    T^{1/2} |T'|^{1/2}
   +   \frac{ T^{7/8}  |T'|^{7/8}}{K^{1/2}}
   + \frac{K^{3/4}  T^{7/8}}{|T'|^{1/8}}
    +\frac{ T^{11/8} K^{5/4} }{|T'|^{9/8}}
   \bigg).
\end{equation*}


If $T'\gg T^{5/6}$, then we take $K=|T'|^{4/5}$, and hence by \eqref{eqn:R} we have
\[
  S_{r}^\pm(N,X,P) \ll N^{1/2+\varepsilon}  T^{7/8} |T'|^{19/40}.
\]
If $T^{3/5} \ll T'\ll T^{5/6}$, then we take $K=|T'|^{8/7}T^{-2/7}$,
and hence by \eqref{eqn:R} we have
\[
  S_{r}^\pm(N,X,P) \ll N^{1/2+\varepsilon}  T^{57/56} |T'|^{17/56}.
\]
Note that $K$ satisfies the condition \eqref{eqn:K>>}.
By \eqref{eqn:SrN<<SrNXP} and \eqref{eqn:SrN<<1} we prove Proposition \ref{prop:SrN}, and hence Theorem \ref{thm:main}.

\section*{Acknowledgements}

The author would like to thank the referees for their very helpful comments and suggestions.


\end{document}